\documentclass[11pt,a4paper]{article}

\usepackage[utf8]{inputenc}         
\usepackage{amssymb,amsmath,amsthm} 
\usepackage[UKenglish]{isodate}     
\usepackage[scale=0.8]{geometry}    
\usepackage[scr]{rsfso}
\usepackage{graphicx}               
\usepackage{xcolor}                 
\usepackage{enumerate, enumitem}    

\usepackage{hyperref}               
\hypersetup{colorlinks=true,citecolor=green,urlcolor=blue,linkcolor=blue,pdfauthor={Bandyopadhyay, Lo, Mycroft},pdftitle = {Towards Posa's Conjecture for 3-graphs}}

\usepackage[backend=biber,natbib=true,style=numeric,isbn=false,minbibnames=99,maxbibnames=99]{biblatex} 
\DeclareFieldFormat{title}{\mkbibquote{#1}}
\DeclareFieldFormat[misc]{date}{Preprint (#1)}

\renewbibmacro*{doi+eprint+url}{
	\iftoggle{bbx:url}
	{\iffieldundef{doi}{\usebibmacro{url+urldate}}{}}{}
	\newunit\newblock
	\iftoggle{bbx:eprint}
	{\usebibmacro{eprint}}
	{}
	\newunit\newblock
	\iftoggle{bbx:doi}
	{\printfield{doi}}
	{}
}
\renewbibmacro{in:}{}
\DeclareFieldFormat{pages}{#1}

\newcommand{\s}[1]{\left\lvert #1 \right\rvert}
\newcommand{\J}{\mathcal{J}}
\newcommand{\eps}{\varepsilon}
\newcommand{\N}{\mathbb{N}}
\newcommand{\C}{\mathcal{C}}
\newcommand{\R}{\mathcal{R}}
\newcommand{\T}{\mathcal{T}}
\newcommand{\F}{\mathcal{F}}
\newcommand{\Hy}{\mathcal{H}} 
\newcommand{\cG}{\mathcal{G}} 
\newcommand{\Hskel}{\Hy^{\text{skel}}}
\DeclareMathOperator{\ex}{\mathrm{ex}}

\newtheorem{theorem}{Theorem}[section]
\newtheorem{lemma}[theorem]{Lemma}
\newtheorem{conjecture}[theorem]{Conjecture}
\newtheorem{definition}[theorem]{Definition}

\newtheorem{claim}[theorem]{Claim}
\newtheorem{proposition}[theorem]{Proposition}
\newtheorem{fact}[theorem]{Fact}

\newenvironment{proofclaim}[1][Proof of claim]{\begin{proof}[#1]}{\end{proof}}

\numberwithin{equation}{section}

\setlist[enumerate]{noitemsep,nolistsep}

\newcommand{\EMAIL}[1]{\textit{{E-mail}}: \texttt{\href{mailto:#1}{#1}}} 

\title{Towards P\'osa's Conjecture for~$3$-graphs}

\author{
Debmalya Bandyopadhyay 
\thanks{School of Mathematics, University of Birmingham, \EMAIL{d.bandyopadhyay@bham.ac.uk}} 
\and Allan Lo \thanks{School of Mathematics, University of Birmingham, \EMAIL{s.a.lo@bham.ac.uk}. }
\and Richard Mycroft \thanks{School of Mathematics, University of Birmingham, \EMAIL{r.mycroft@bham.ac.uk}. }
}
 
\date{\today}

\begin{document}
\maketitle

\begin{abstract}
We prove that every $3$-graph $H$ on $n$ vertices with minimum codegree $\delta_2(H) \geq 7n/9 + o(n)$ contains the square of a tight Hamilton cycle. This strengthens a theorem of Bedenknecht and Reiher that $\delta_2(H) \geq 4n/5 + o(n)$ is sufficient. The central novelty of our arguments is an improved understanding of the connectivity structure of~$3$-graphs with large minimum codegree. 
\end{abstract}

\section{Introduction} \label{sec: Introduction}
A fundamental question in graph theory is to determine optimal conditions on a host graph $G$ --- often expressed in terms of the minimum degree $\delta(G)$ --- which guarantee the existence of a given structure in~$G$. Dirac's theorem that every graph $G$ on $n \geq 3$ vertices with $\delta(G) \geq \frac{n}{2}$ admits a Hamilton cycle is an archetypal result of this kind. Since then, a vast body of theory has been developed describing sufficient conditions for embeddings in graphs, hypergraphs, directed graphs and other combinatorial settings. One of the most influential problems in developing this theory was the P\'osa--Seymour conjecture stating that every graph $G$ on $n \geq 3$ vertices with \(\delta(G) \ge \frac{r}{r+1}n\) contains the $r$-th power $\C^r$ of a Hamilton cycle, that is, the graph obtained from a Hamilton cycle by adding the edges of a copy of $K_{r+1}$ on each set of $r+1$ consecutive vertices of the cycle
(P\'osa's conjecture was the case $r=2$ concerning squares of Hamilton cycles, and Seymour~\cite{SeymourConj} later proposed the more general statement).

The difficulty of this conjecture arises principally due to the high level of connectivity of powers of cycles. Indeed, a perfect $K_{r+1}$-tiling (a spanning collection of vertex-disjoint copies of $K_{r+1}$) can be viewed as a less-connected analogue of~$\C^r$, since locally each clique has the same structure as $r+1$ consecutive vertices of~$\C^r$. The Hajnal--Szemer\'edi theorem~\cite{HajnalSzem}, which can be proved by relatively simple techniques, states that the same minimum degree condition as in the P\'osa--Seymour conjecture forces $G$ to contain a perfect $K_{r+1}$-tiling. By contrast, after a great deal of attention and partial results by many researchers~\cite{Fan-Kier2/3,faudree, approxPosaSeymour,PosaConjProof}, it was only by using extensive novel machinery including Szemer\'edi's Regularity lemma and the Blow-up lemma that the P\'osa--Seymour conjecture was finally proven for sufficiently large $n$ by Koml\'os, Sark\"ozy and Szemer\'edi~\cite{PosaSeymourProof}. For small values of~$n$ the conjecture remains open to this day.

The question we consider here is the generalisation of the P\'osa--Seymour conjecture to $3$-graphs (i.e. $3$-uniform hypergraphs --- see Section~\ref{sec:notation} for this definition and for other standard terms pertaining to hypergraphs). Specifically, we consider minimum codegree conditions which ensure the existence of the square of a tight Hamilton cycle in a $3$-graph. The \emph{minimum codegree} of a $3$-graph~$H$, denoted by $\delta_2(H)$, is the largest $m$ for which every pair of vertices of $H$ is contained in at least $m$ edges. A $3$-graph contains the \emph{square of a tight Hamilton cycle} if its vertices can be cyclically ordered so that every set of four consecutive vertices forms a tetrahedron (that is, a copy of $K^3_4$, the complete $3$-graph on four vertices). 
Our main result is that a minimum codegree of~$7n/9 + o(n)$ suffices for this purpose.
\begin{theorem} \label{thm: ourthm}
    For all $\alpha > 0$ there exists $n_0 \in \N$ such that every $3$-graph~$H$ on $n \geq n_0$ vertices with $\delta_2(H) \geq (7/9 + \alpha) n$ contains the square of a tight Hamilton cycle. 
\end{theorem}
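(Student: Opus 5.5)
We will prove Theorem~\ref{thm: ourthm} by the absorbing method combined with hypergraph regularity, following the general framework used by Bedenknecht and Reiher for the $4n/5$ bound. Their argument is essentially modular. There is a \emph{connecting lemma}, an \emph{absorbing lemma}, a \emph{reservoir lemma}, and a \emph{covering lemma} giving an almost spanning structure. Only some of these modules genuinely need codegree $4n/5$. Our plan is to keep the parts that already work at $7n/9$ and replace the parts that do not.

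The basic objects are ``squared tight paths'' with well-defined ends. An end is a triple $(a,b,c)$ spanning an edge, together with the requirement that the last few consecutive quadruples are tetrahedra. The key steps, in order, are as follows.

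\textbf{(1) Connectivity.} Define an auxiliary structure on suitable ordered triples, or on the tetrahedra of $H$: two are adjacent if they can be joined by a short squared tight walk. Show that when $\delta_2(H)\ge(7/9+\alpha)n$ this structure is \emph{connected*}, meaning any two ends can be linked by a squared path of constant length.

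\textbf{(2) Reservoir.} Choose a small random reservoir $R$ that preserves these connections. For any two ends one can then find a connecting path inside $R$ with few vertices.

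\textbf{(3) Absorbing path.} Build a short absorbing squared path $A$. For every small vertex set $X$ of suitable size, there is a squared path with the same ends as $A$ and vertex set $V(A)\cup X$. The absorbers are small gadgets: for a vertex $v$, take a tetrahedron-rich configuration in which $v$ can be inserted. At codegree above $3n/4$, every triple of vertices has $\Omega(n)$ common codegree-neighbours, so absorbers for each $v$ are plentiful. Standard random selection then gives $A$, with the pieces glued together using~(1).

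\textbf{(4) Almost cover.} Apply the weak or strong hypergraph regularity lemma to $H\setminus(V(A)\cup R)$. The reduced $3$-graph inherits approximately the same codegree. In it we find an almost perfect fractional tiling by a suitable ``blow-up-friendly'' structure, namely squared tight cycles of length divisible by $4$, or $K^3_4$-tilings. By the hypergraph Hajnal--Szemer\'edi-type results this already holds well below $7n/9$. Each tile is embedded as a long squared path. The paths are linked through $R$, and the leftover vertices are absorbed by $A$.

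The step requiring the $7/9$ threshold, and the main obstacle, is~(1), together with its robust version in the reduced graph. Bedenknecht and Reiher's connectivity argument is where $4/5$ arises. My approach is as follows. Fix an edge $abc$ and consider its \emph{tetrahedral neighbourhood}: the vertices $d$ such that $abcd$ is a $K^3_4$. By codegree, this set has size at least $(1-3(2/9))n = n/3$ plus $\alpha n$. Moves are ``rotations''. Given consecutive $a,b,c,d$ forming a tetrahedron, we may step to $(b,c,d)$ and then append any $e$ with $bcde$ a tetrahedron. Orbits of such moves partition the ordered edges into classes. I would then argue by contradiction. Suppose there are two classes. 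Use the fact that link graphs $H(x)$ have minimum degree above $7n/9$, so the links of two vertices share dense structure. Find a triple $x,y,z$ whose pairwise links overlap in more than $n/3$ vertices, which forces a common tetrahedron bridging the classes.

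The counting threshold should come from requiring three sets, each of size more than $7n/9$, to share a common edge of a pair-link. The relevant inequality is $3(7/9)-2 = 1/3 > 0$ at the vertex level, combined with a second averaging over pairs. I expect the bulk of the work, and the case analysis that produces exactly $7/9$, to lie here. One must also handle \emph{orientation/parity} of ends, which I would deal with by showing that each tetrahedron admits rotations realising all orderings. Finally, this connectivity must be shown to survive in the regularity reduced graph, with $\alpha/2$ slack, so that step~(4) can use it.
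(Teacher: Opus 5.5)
\textbf{Gap: step (1) is not proved.} Steps (2)--(4) follow the same absorbing/regularity framework the paper uses. But step (1) is the main new content of the paper: tight connectivity of the tetrahedral $4$-graph $\T(H)$ when $\delta_2(H)>7n/9$. Your proposal gives no argument for it, and the mechanism you sketch does not work. Every edge $xyz$ already has $(1/3+\Omega(1))n$ tetrahedral neighbours by codegree alone. So finding a triple with a large common neighbourhood tells you nothing about two putative classes. To merge the classes of two edges $xyz$ and $wyz$ directly, you need a vertex in the common neighbourhood of the five pairs $xy,xz,yz,wy,wz$. That is guaranteed only when $5\delta_2>4n$, which is exactly the Bedenknecht--Reiher threshold. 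Similarly, $3\cdot 7/9-2>0$ holds for every $\delta_2>2n/3$, so it cannot be where $7/9$comes from.

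In the paper, $7/9$ enters through an averaging fact: among any nine pairs, some vertex lies in at least eight of their neighbourhoods, since $9\cdot 7/9=9-2$. Exploiting this needs a genuine structural argument:
\begin{enumerate}
\item colour each edge by the tight component containing its tetrahedra;
\item show the coloured links contain no properly coloured or $3$-coloured $C_4$ and no $3$-coloured $P_4$;
\item deduce that every pair and every vertex lies in at most two components, and then that there are at most two components;
\item forbid certain alternating two-coloured tight walks;
\item show that one component is non-spanning yet contains a monochromatic $K^3_5$;
\item finish with an averaging argument plus Mantel's theorem.
\end{enumerate}
Without something of this kind, step (1) merely restates what has to be proved.

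\textbf{Further errors in the surrounding steps.}
\begin{itemize}
\item \emph{Orientation.} Your fix fails. Inside a single $K^3_4$ on $\{a,b,c,d\}$ a squared tight walk is forced to be $abcdabcd\ldots$, so from $(a,b,c)$ only the four cyclic shifts are reachable. The paper instead uses a $K^3_5$ in a subset of the reduced graph, which exists because $\gamma(K^3_5)\le 0.74<7/9$.
\item \emph{Passing to the reduced graph.} The reduced graph inherits the codegree only for almost all pairs, whereas the connectivity lemma needs an exact minimum codegree. The paper handles this in three moves: it takes a strongly dense subgraph, restricts to a random constant-size vertex subset with a genuine codegree bound (Ferber--Kwan), and lifts the resulting walk to a path in $H$ via the Extension Lemma.
\item \emph{Absorbers.} ``Every triple has $\Omega(n)$ common neighbours'' is too weak for greedy selection at $7n/9$. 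In a natural single-vertex gadget (a squared path $u_1\dots u_6$ with $v$ insertable between $u_3$ and $u_4$), choosing $u_4$ already needs a common neighbour of five pairs. The paper instead takes triangles of $L(v)\cap L(y)$ that are also edges of $H$ and applies blow-up supersaturation. It absorbs vertices in groups of four and uses a $K^3_5(4)$ to fix divisibility.
\end{itemize}
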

This strengthens a previous result of Bedenknecht and Reiher~\cite{ReiherBed}, who showed that a minimum codegree of~$4n/5 + o(n)$ is sufficient. Pavez-Sign\'e, Sanhueza-Matamala and Stein~\cite{hypPosaSeymour} subsequently generalised their argument to give a bound for the $r$-th power of a tight Hamilton cycle in a $k$-graph for each~$r$ and $k$, with the same $4n/5 + o(n)$ bound for the square of a tight Hamilton cycle in a $3$-graph.

As in the graph case, the central difficulty in proving Theorem~\ref{thm: ourthm} arises due to the high level of connectedness of the structure we are aiming to embed. Indeed, the best-possible minimum codegree condition for a perfect tetrahedron tiling in a $3$-graph was identified as essentially $3n/4$ by Lo and Markstr\"om~\cite{LoMark} and by Keevash and Mycroft~\cite{KeevashMycroft} (the latter authors in fact gave the precise optimal bound for sufficiently large $n$). Every four consecutive vertices in the square of a tight Hamilton cycle induce a tetrahedron, so the perfect tetrahedron tiling can be seen as a less-connected analogue of the structure we seek to embed. More widely, our understanding of embeddings of spanning structures in hypergraphs is poor in comparison to the well-developed theory of spanning structures in graphs, and a key reason for this is our poor understanding of how to handle connectivity in hypergraphs. Our proof of Theorem~\ref{thm: ourthm} makes a significant contribution on this specific point: the key ideas which enable us to improve on the work of Bedenknecht and Reiher~\cite{ReiherBed} is a better understanding of the connectivity structure of~$3$-graphs satisfying the given minimum codegree condition, as outlined in Section~\ref{sketch}.
We also recommend the surveys by R\"odl and Ruci\'nski~\cite{Rodl-survey} and by Zhao~\cite{Yi-survey} for further discussion of Hamilton cycles and tilings in uniform hypergraphs.

We do not believe that the bound of Theorem~\ref{thm: ourthm} is optimal. Indeed, we conjecture that a condition of~$3n/4 + o(n)$ is likely to be sufficient for the existence of the square of a tight Hamilton cycle in a graph on $n$ vertices. 

\begin{conjecture}\label{conj}
For all $\alpha > 0$ there exists $n_0 \in \N$ such that every $3$-graph~$H$ on $n \ge n_0$ vertices with $\delta_2(H) \ge (3/4 + \alpha)n$ contains the square of a tight Hamilton cycle.
\end{conjecture}

A slight modification of a construction by Pikhurko~\cite{Pikhurko} shows Conjecture~\ref{conj}, if true, would be best-possible up to the $\alpha n$ error term. This is the following proposition, which we prove in Section~\ref{sec:extremal}.

\begin{proposition}\label{construct}
    For each~$n > 4$ there exists a $3$-graph~$H$ on~$n$ vertices with $\delta_2(H) =  \lfloor 3n/4 \rfloor  - 2$ that does not contain the square of a tight Hamilton cycle.
\end{proposition}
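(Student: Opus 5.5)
The plan is to build an explicit \emph{space-barrier}-type construction in the spirit of Pikhurko and then find a counting obstruction which every square of a tight Hamilton cycle must satisfy. Let~$C$ be the square of a tight Hamilton cycle on vertices $v_1, \dots, v_n$ in cyclic order. I will use the following structural fact about~$C$. Every set of at most four cyclically consecutive vertices lies in a common tetrahedron of~$C$. Hence, if $A \subseteq V$ is a set such that no tetrahedron of~$H$ contains two vertices of~$A$, then any two vertices of~$A$ are at cyclic distance at least~$4$ in~$C$. It follows that $|A| \le \lfloor n/4 \rfloor$.

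So I take a vertex set $V = A \cup B$ with $|A| = \lfloor n/4 \rfloor + 1$. I then define the edges so that no tetrahedron meets~$A$ in two or more vertices, while keeping every codegree at least $\lfloor 3n/4 \rfloor - 2$. The natural first attempt is the following.
\begin{itemize}
\item All triples inside~$B$ are edges, and all triples with exactly one vertex in~$A$ are edges.
\item Triples with two or three vertices in~$A$ are restricted according to an auxiliary structure on~$B$, in the style of Pikhurko.
\end{itemize}
A concrete form of this restriction is to give $B$ a partition $B_1 \cup B_2 \cup B_3$ into near-equal parts, or equivalently to give the vertices labels in~$\mathbb{Z}_4$, where $A$ is the label-$0$ class. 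One then declares a triple to be an edge according to a sum condition on the labels. The condition is chosen so that each pair of vertices loses only about one class, of size roughly~$n/4$, from its common neighbourhood. This gives a codegree of about $n - 2 - n/4$, which is where the $\lfloor 3n/4 \rfloor - 2$ comes from; the~$-2$ accounts for the pair itself.

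The key steps, in order, are as follows.
\begin{enumerate}
\item Fix the labelling and the sum rule, and verify the codegree bound pair type by pair type, treating the rounding of class sizes for each residue of~$n$ modulo~$4$.
\item Show by a short case analysis on the labels of four vertices that the sum rule forbids any tetrahedron containing two vertices of~$A$, or more generally forbids a suitable local pattern.
\item Apply the window argument above to conclude that $|A| \le \lfloor n/4 \rfloor$, contradicting $|A| = \lfloor n/4 \rfloor + 1$.
\end{enumerate}

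The main obstacle is step~2. It requires choosing the rule so that it simultaneously kills every tetrahedron through two vertices of~$A$ and loses only one class of size about~$n/4$ from each codegree. The simple rule ``$xyz$ is an edge iff the label sum is nonzero'' gives the right codegree. However, it does not obviously forbid such tetrahedra; it only forbids the total label sum of a tetrahedron from equalling one of its own labels.

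If that is the case, I will replace the ``two vertices of~$A$'' obstruction by a different invariant. The idea is to exploit the sliding-window structure of~$C$: consecutive tetrahedra $\{v_i, \dots, v_{i+3}\}$ and $\{v_{i+1}, \dots, v_{i+4}\}$ share three vertices. The invariant would be the total label sum of each window, which can only change in a constrained way when the window slides. One would then show that the class sizes force the window sum to eventually hit a forbidden value, making this a divisibility-type barrier.

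Whichever rule survives this check, the final step is a careful but routine verification of the exact codegree $\lfloor 3n/4 \rfloor - 2$ for all $n > 4$.
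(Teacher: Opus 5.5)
\textbf{The main plan cannot work, and the fallback is not yet a proof.}

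Your Step~2 cannot succeed for any choice of rule, because the obstruction you are aiming for does not exist at this codegree. Take any two vertices $a,a'$ and any $u\in N_H(aa')$. The sets $N_H(aa')\setminus\{u\}$, $N_H(au)\setminus\{a'\}$ and $N_H(a'u)\setminus\{a\}$ all lie in $V\setminus\{a,a',u\}$, and each has size at least $\delta_2(H)-1$. So they have at least $3\delta_2(H)-2n+3$ common vertices $w$, and for each such $w$ the set $\{a,a',u,w\}$ spans a $K^3_4$. With $\delta_2(H)\ge\lfloor 3n/4\rfloor-2$ this quantity is at least $(n-21)/4>0$ for $n\ge 22$. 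More generally it is positive whenever $\delta_2(H)>2n/3$. Hence every pair of vertices lies in a tetrahedron, and no set $A$ with $|A|\ge 2$ has the property your distance-$4$ argument requires. So the space-barrier route of Steps~2 and~3 is not merely unverified for the sum rule; it is impossible.

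The fallback does not repair this. No edge set is ever fixed. Moreover, the window label sum is not constrained when the window slides: it changes by $\ell(v_{i+4})-\ell(v_i)$, which can be any residue, so nothing forces it onto a forbidden value. The missing idea is to track a \emph{count} rather than a sum. The quantity $c_i=|\{v_i,\dots,v_{i+3}\}\cap V_1|$ changes by at most one as the window slides. So if $H$ is built so that every $K^3_4$ meets $V_1$ in $0$ or $2$ vertices, then $c_i$ is constant around any squared tight cycle.

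The paper does exactly this with four near-equal classes $V_1,\dots,V_4$. Its non-edges are precisely:
\begin{itemize}
\item the triples inside $V_1$;
\item the triples with one vertex in $V_1$ and two in the same $V_i$ ($i\ne 1$);
\item the triples meeting each of $V_2,V_3,V_4$ once.
\end{itemize}
Every pair then misses exactly one class besides itself, giving $\delta_2(H)=\lfloor 3n/4\rfloor-2$. A Hamiltonian square would then have to avoid $V_1$ or have exactly half of its vertices in $V_1$. Both are impossible since $|V_1|\le\lceil n/4\rceil$.

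Your $\mathbb{Z}_4$ sum rule can in fact be rescued by the same device. The tetrahedra with exactly one label-$2$ vertex have labels $\{2,0,0,\pm 1\}$, and those with exactly two have labels $\{2,2,1,3\}$. These cannot be consecutive windows, so double counting gives $|V_2|\ge n/2$ or $|V_0|\ge 2|V_2|$. Both are impossible for near-equal classes once $n\ge 8$. None of this argument appears in your proposal, though.
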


The minimum degree condition of Theorem~\ref{thm: ourthm} is crucial to our proof arguments, and significant new ideas would be needed to reduce the gap in minimum codegree between Theorem~\ref{thm: ourthm} and Proposition~\ref{construct}. We consider this to be a highly significant open problem in extremal graph theory, since its resolution appears to require a much better understanding of how to handle connectivity in uniform hypergraphs under minimum codegree conditions, which is likely to prove useful for many other problems in this area.

\subsection{Key ideas and definitions} \label{sketch}

The central new ideas in our work relate to connectivity properties in $3$-graphs. Specifically, we study the tight connectivity of the \emph{tetrahedral graph} $\T(H)$ of a $3$-graph $H$ of large minimum codegree. This is defined to be the $4$-graph with vertex set $V(H)$ whose edges are all sets of four vertices which induce a copy of~$K^3_4$ in~$H$. 

Let $H$ be a $k$-graph. For edges $e, f \in E(H)$, a \emph{tight walk} from~$e$ to~$f$ in~$H$ is a sequence of (not necessarily disjoint) vertices~$W = (v_1, v_2, \dots, v_{\ell})$ with~$e = \{v_1,\dots, v_k\}$ and $f = \{v_{\ell-k+1}, \dots, v_{\ell}\}$ such that~$\{v_i,\dots, v_{i+k-1}\} \in E(H)$ for each~$i \in [\ell -k+1]$. In other words, $W$ begins with the vertices of~$e$ in some order, concludes with the vertices of~$f$ in some order, and each set of $k$ consecutive vertices in~$W$ forms an edge of~$H$. A set~$\mathscr{E} \subseteq E(H)$ of edges of~$H$ is \emph{tightly connected} if for all~$e, f \in \mathscr{E}$ there exists a tight walk from~$e$ to~$f$. Observe that the relation on $E(H)$ in which $e \sim f$ if there exists a tight walk from~$e$ to~$f$ is an equivalence relation; the equivalence classes of this relation are the \emph{tight components} of~$H$. We say that $H$ is tightly connected if $E(H)$ is \emph{tightly connected}, meaning that $H$ has a single tight component.

It is straightforward to show that if $H$ is a $3$-graph on $n$ vertices with $\delta_2(H) > 4n/5$, then $\T(H)$ is tightly connected. 
Indeed, note that any two copies of~$K^3_4$ in~$H$ sharing three vertices are tightly connected in~$\T(H)$.
Since $H$ is tightly connected (c.f. \cite[Lemma~2.1]{RRS3graph}), it suffices to show that given any two edges $\{x,y,z\}$ and $\{w,y,z\}$ in~$H$, there exists a vertex~$u$ such that both $\{x,y,z,u\}$ and $\{w,y,z,u\}$ form a copy of~$K^3_4$ in~$H$. Since $\delta_2(H) > 4n/5$, we may do this by choosing $u$ to be a common neighbour of the five pairs $xy$, $xz$, $yz$, $wy$ and~$wz$.
A variant of this argument played a key role in Bedenknecht and Reiher's proof~\cite{ReiherBed}.

A key step in the proof of Theorem~\ref{thm: ourthm} is to show that the weaker minimum codegree condition $\delta_2(H) > 7n/9$ is also sufficient to ensure that $\T(H)$ is tightly connected. This is the following lemma, whose proof is the subject of Section~\ref{sec: Tight Connectivity}.

\begin{lemma} \label{lma: tight connectivity at 7n/9}
    Let~$H$ be a~$3$-graph on~$n$ vertices with~$\delta_2(H) > 7n/9$. Then~$\T(H)$ is tightly connected.
\end{lemma}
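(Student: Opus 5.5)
The plan is to reduce the statement to a local question about a single pair of vertices, and then analyse the neighbourhood of that pair. Throughout, write $N(uv)$ for the set of vertices $t$ with $uvt \in E(H)$, so that $|N(uv)| > 7n/9$ for every pair.

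\textbf{Step 1: every edge determines a tight component.} Every edge $xyz$ of $H$ lies in a tetrahedron. Indeed, $|N(xy)\cap N(xz)\cap N(yz)| > 3\cdot 7n/9 - 2n = n/3 > 0$. Any two tetrahedra $xyzu$ and $xyzu'$ containing the same edge $xyz$ lie in the same tight component of $\T(H)$, as witnessed by the tight walk $(u,x,y,z,u')$. So each edge $e \in E(H)$ determines a well-defined tight component $\kappa(e)$ of $\T(H)$. Moreover, all four triples inside one tetrahedron receive the same value of $\kappa$.

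\textbf{Step 2: reduction to a single pair.} By \cite[Lemma~2.1]{RRS3graph}, $H$ is tightly connected. Hence it suffices to show that $\kappa(xyz) = \kappa(wyz)$ whenever $x,w \in N(yz)$. Fix a pair $yz$ and define an auxiliary graph $G_{yz}$ on the vertex set $N(yz)$, with $ab$ an edge whenever $abyz$ is a tetrahedron. If $ab \in E(G_{yz})$ then $\kappa(ayz) = \kappa(byz)$. The neighbourhood of $a$ in $G_{yz}$ is $N(yz)\cap N(ay)\cap N(az)$, which has size $> n/3$. So every component of $G_{yz}$ has more than $n/3$ vertices, and $G_{yz}$ has at most two components. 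It therefore remains to rule out, or to bridge, the case of exactly two components $C_1$ and $C_2$.

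\textbf{Step 3: the two-component case (main obstacle).} This is where I expect the real difficulty, since the naive five-pair common neighbour argument fails below $4n/5$.

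First, I would extract structure. For $a \in C_1$, the set $C_2$ is disjoint from $N(ay)\cap N(az)$, a set whose complement has size $< 4n/9$. Hence $|C_1|, |C_2| < 4n/9$, while $|C_1|+|C_2| > 7n/9$. So both parts have size in $(n/3, 4n/9)$. Furthermore, for almost all $a \in C_1$ and $b \in C_2$, one of $aby$ and $abz$ is a non-edge. This gives a near-extremal, bipartite-like configuration.

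Second, I would try to connect $\kappa(ayz)$ and $\kappa(byz)$ by changing the pivot pair. Starting from a tetrahedron $ayzu$, move to the pair $yu$ or $au$, whose graphs $G_{yu}$ and $G_{au}$ also have at most two components. The aim is to find a chain of tetrahedra that reaches an edge $byz'$ and then returns to $byz$.

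Concretely, the first thing to try is a counting or double-counting argument. Summing the non-edges forced by the partition $C_1 \cup C_2$ over all choices of the other vertex should show that the codegrees of pairs such as $ay$ (for $a \in C_1$) are pushed below $7n/9$. This would contradict the hypothesis unless the partition can be bridged via some other pair, which in turn yields the desired tight walk.

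The key numerical point is that $7n/9$ is exactly the threshold at which the parts have size $> n/3$ but $< 4n/9$. The contradiction should come from combining the constraints from the three pairs $yz$, $ay$ and $az$ simultaneously.
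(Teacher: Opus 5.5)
Your Steps 1 and 2 are correct, but they only reproduce the easy local argument. The whole content of the lemma is in Step 3, which you leave as a heuristic, and the mechanism you propose there cannot work.

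\textbf{The two-component case is not contradictory.} When $G_{yz}$ has two components, nothing local goes wrong under $\delta_2(H)>7n/9$. Your own bounds $|C_i|>3\delta_2(H)-2n$ and $|C_i|<2(n-\delta_2(H))$ are incompatible only when $\delta_2(H)\ge 4n/5$, which is exactly the $4n/5$ argument from the introduction. Here is a concrete example. Take $V(H)=\{y,z\}\cup A_1\cup A_2\cup B_1\cup B_2\cup D$ with $|A_i|=|B_i|=0.195n$ and $|D|=0.22n-2$. Let every triple be an edge except:
\begin{enumerate}
\item $yzd$ for $d\in D$;
\item $aby$ for $(a,b)\in(A_1\times B_1)\cup(A_2\times B_2)$;
\item $abz$ for $(a,b)\in(A_1\times B_2)\cup(A_2\times B_1)$.
\end{enumerate}
Then $\delta_2(H)=0.78n>7n/9$. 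The graph $G_{yz}$ has exactly the two components $A_1\cup A_2$ and $B_1\cup B_2$, each of size $0.39n\in(n/3,4n/9)$. Yet every pair $ay$, $az$ has codegree about $0.805n$. So no counting around $yz$, $ay$, $az$ can push a codegree below $7n/9$. The two components of $G_{yz}$ can only be joined by tight walks that leave the pivot pair. Your fallback, bridging ``via some other pair'', is therefore just a restatement of the lemma. Relatedly, $7/9$ does not enter through the size window for $|C_i|$.

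\textbf{What is missing.} You need a global argument about all tight components at once. In the paper, each edge is coloured by its tight component. The key tool is Proposition~\ref{Prp: 8 from 9 and 7 with one fixed}: among any nine pairs, some eight have a common neighbour, and a prescribed pair together with all but one of seven further pairs have a common neighbour. This is where $7/9$ genuinely enters, via $9\cdot 7n/9=(9-2)n$. Combined with Fact~\ref{fct: common nbr of adjacent triangles}, the argument runs as follows.
\begin{enumerate}
\item Link graphs contain no properly coloured or $3$-coloured $C_4$, and no $3$-coloured $P_4$.
\item Hence every pair and every vertex lies in at most two components, and then there are at most two components overall.
\item With two components $r,b$, alternating tight paths of lengths $4$ and $5$ are excluded. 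This shows that $b$ is not spanning, that the set $U$ of vertices lying only in $r$ has more than $n/3$ vertices, and that $b$ contains a monochromatic $K^3_5$.
\item Averaging the codegrees of the ten pairs of this $K^3_5$ into $U$, together with Mantel's theorem, produces a tetrahedron with edges of both colours, which is a contradiction.
\end{enumerate}
Your plan has no substitute for this chain, so as it stands it does not prove the lemma.
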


A \emph{squared-tight-walk} in a $3$-graph $H$ is an ordered sequence $W = (v_1,v_2, \dots, v_{\ell})$ of vertices of~$H$ in which every set of four consecutive vertices forms a copy of~$K^3_4$ in~$H$. So squared-tight-walks in~$H$ are precisely tight walks in the tetrahedral graph~$\T(H)$. We refer to the ordered triples $(v_1,v_2,v_3)$ and $(v_{\ell-2},v_{\ell-1},v_\ell)$ as the \emph{initial triple} and the \emph{final triple} of~$W$ respectively, and we refer to both triples as \emph{ends} of~$W$. For convenience of notation we often write ordered triples as, for example, $\mathbf{v} = (v_1,v_2,v_3)$. We say that $W$ is \emph{from~$\mathbf{u}$ to~$\mathbf{v}$} if $W$ has initial triple $\mathbf{u}$ and final triple $\mathbf{v}$. Of particular importance are \emph{squared-tight-paths} in~$H$, which are squared-tight-walks in~$H$ in which all vertices in the sequence are distinct. 

Since the minimum degree condition of Lemma~\ref{lma: tight connectivity at 7n/9} ensures that every edge of~$H$ extends to a copy of~$K^3_4$ in~$H$, the conclusion of Lemma~\ref{lma: tight connectivity at 7n/9} can be equivalently reformulated by saying that for all $e, f \in E(H)$ there is a squared-tight-walk from~$e$ to~$f$. However, this squared-tight-walk is not necessarily a squared-tight-path in~$H$. By applying Lemma~\ref{lma: tight connectivity at 7n/9} to the reduced $3$-graph of~$H$ obtained from an application of the Regular Slice Lemma of Allen, B\"ottcher, Cooley and Mycroft~\cite{ABCM}, we can show that indeed there must be a squared-tight-path from~$e$ to~$f$ in~$H$ for all $e, f \in E(H)$. In fact we give a stronger result -- the following connecting lemma -- which allows us to find squared-tight-paths linking each of several pairs of edges. Moreover, we may insist that these squared-tight-paths are pairwise vertex-disjoint and avoid a given small set $X$ of `forbidden' vertices. For a set $\mathscr{P} = \{P_1, \dots, P_{\ell}\}$ of squared-tight-paths, we write $V(\mathscr{P})$ for~$\bigcup_{1\le i \le \ell}V(P_i)$. The asymptotic notation $a \ll b$ used here --- which is standard in this field --- is defined formally in Section~\ref{sec:notation}.

\begin{lemma}[Connecting Lemma] \label{lma: simpler connecting lemma}
    Let $1/n \ll \psi \ll \alpha$. Let $H$ be a $3$-graph on $n$ vertices with $\delta_2(H) \ge (7/9 + \alpha)n$. Let $X \subseteq V(H)$ be such that $|X| \le \psi n$. Let $\mathbf{x}_1, \mathbf{y}_1, \dots, \mathbf{x}_s, \mathbf{y}_s$ be vertex-disjoint edges in~$H \setminus X$ with $s \le \psi n$. Then there exists a set $\mathscr{P} = \{P_1, \dots, P_{s}\}$ of vertex-disjoint squared-tight-paths in~$H\setminus X$ such that for each~$i \le s$, $P_i$ is from~$\mathbf{x}_i$ to~$\mathbf{y}_i$, $|V(\mathscr{P})| \le \psi^{1/2} n$ and $|V(\mathscr{P}) \cap X|=\emptyset$.
\end{lemma}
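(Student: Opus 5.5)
The plan is to find the paths greedily, one pair at a time. Each connection will use only a constant number of vertices, say at most $L = L(\alpha)$, and $L$ will not depend on $\psi$. Fix $i \le s$ and suppose $P_1,\dots,P_{i-1}$ have already been found. Let $Y = X \cup V(P_1)\cup\dots\cup V(P_{i-1}) \cup \bigcup_{j} (\mathbf{x}_j\cup\mathbf{y}_j)$, but remove $\mathbf{x}_i\cup\mathbf{y}_i$ from it. Then $|Y| \le \psi n + Ls + 6s \le 2L\psi n$, which is much smaller than $\alpha n$ because $\psi \ll \alpha$. Set $H_i = H\setminus Y$. Every pair of vertices of $H$ loses at most $|Y|$ in codegree, so $\delta_2(H_i) \ge (7/9+\alpha/2)|V(H_i)|$. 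It therefore suffices to prove the single-pair statement: if $\delta_2(G)\ge(7/9+\alpha/2)|V(G)|$, then any two disjoint edges $\mathbf{x},\mathbf{y}$ of $G$ are joined by a squared-tight-path on at most $L$ vertices. The final bound $|V(\mathscr{P})| \le Ls \le L\psi n \le \psi^{1/2}n$ holds since $\psi \ll \alpha$.

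For the single-pair statement, first apply Lemma~\ref{lma: tight connectivity at 7n/9} to $G$. The edge $\mathbf{x}$ extends to a tetrahedron, because its three pairs have more than $n/3$ common neighbours, and the same holds for $\mathbf{y}$. So Lemma~\ref{lma: tight connectivity at 7n/9} gives a squared-tight-walk $W$ from $\mathbf{x}$ to $\mathbf{y}$. The walk must be made short and turned into a path. For shortness, take a shortest walk in $\T(G)$ between the two ends. To bound its length by a constant, I would instead use a robust version of the argument, applying Lemma~\ref{lma: tight connectivity at 7n/9} to a bounded-size reduced $3$-graph. Concretely, apply the Regular Slice Lemma of~\cite{ABCM} to $G$ with $t \le T(\alpha)$ clusters. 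The reduced $3$-graph $R$ then inherits $\delta_2(R) > 7t/9$. By Lemma~\ref{lma: tight connectivity at 7n/9}, $\T(R)$ is tightly connected, and any tight walk in it has length bounded in terms of $t$, hence bounded in terms of $\alpha$.

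The next step is to embed this reduced walk into $G$ as a genuine path, using counting lemmas for regular complexes. Each tetrahedron of $R$ corresponds to a regular $4$-partite complex in which $K_4^3$'s are dense. The squared-tight-walk in $\T(R)$ then becomes a "squared-tight-path of clusters", and it can be embedded vertex by vertex. At each step the new vertex must lie in a common neighbourhood of three previous pairs inside the right cluster, and this neighbourhood has linear size by the counting/extension lemma. Repeated clusters in the walk cause no problem, since there are linearly many choices and we only need to avoid $O(1)$ used vertices. This yields distinct vertices.

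The ends must also be attached. From $\mathbf{x}$, I would greedily extend by a few vertices, using the codegree $> 7n/9$ of the relevant pairs. This reaches a triple that is "typical" for some tetrahedron of $R$, meaning its link neighbourhoods meet the clusters in the expected way. Most triples are typical, so the extension can be steered into one. The same is done at $\mathbf{y}$. I expect this step to be the main obstacle: getting from an arbitrary edge of $G$, which the regularity partition does not see, into the well-behaved part of the slice, and doing so with both orientations of the ordered end triples handled. I would first try to show that a tetrahedron on $\mathbf{x}$ plus one further common neighbour gives a triple whose pair-neighbourhoods in the slice are large in a tetrahedron of $R$. I would get this by averaging over the $>7n/9$ extensions and using that few triples are atypical.
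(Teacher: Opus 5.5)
\textbf{Verdict: the outer reduction is correct; the single-pair connector has a false step and two unresolved gaps.}

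Your outer reduction is the paper's. It too connects the pairs greedily, each time applying a single-pair connector (Lemma~\ref{lma: Connecting Lemma}) inside $H$ minus $X$, the other end triples and the paths already built. The paper allows $\psi^{-1/2}$ vertices per connection where you allow $L(\alpha)$; either works.

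\textbf{First gap: the reduced graph does not inherit $\delta_2(R)>7t/9$.} The slice controls degrees in the weighted reduced graph. In $R_{d_3}$, however, a pair of clusters lying in many irregular triples may have small codegree. You only know that all but a $\theta$-fraction of pairs have codegree at least $(7/9+\alpha/2)t$ (Proposition~\ref{Prop: almost codegree in the reduced graph}), so Lemma~\ref{lma: tight connectivity at 7n/9} cannot be applied to $R$. The paper first passes to a strongly dense $R'\subseteq R$ (Lemma~\ref{lma: find a strongly dense subgraph}). It then takes a random set $W$ of $T=T(\alpha)$ clusters. By Lemma~\ref{Lma: min codeg} and Hoeffding, $\delta_2(R'[W])>7T/9$, and every edge of $R'$ extends into $W$ by three vertices. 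Lemma~\ref{lma: tight connectivity at 7n/9} is applied only to $R'[W]$.

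\textbf{Second gap: orientation, which you flag but leave open.} Tight connectivity lets you start from a prescribed ordered triple, but you reach the target $4$-set in an uncontrolled order. A walk inside one tetrahedron $abcd$ started at $(a,b,c)$ only visits $(a,b,c),(b,c,d),(c,d,a),(d,a,b)$, and never, for example, $(c,b,a)$. The missing idea is a copy of $K^3_5$ in $R'[W]$, which exists since $\gamma(K^3_5)\le 0.74<7/9$ (Theorem~\ref{Thm: codegree density of K35}). Inside it every ordered triple can be walked to every other. So one walks into it from both ends and reverses one of the walks (Lemma~\ref{lma: find a walk in the reduced graph}).

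\textbf{Third gap: your plan for attaching the ends fails as stated.} A tetrahedron on $\mathbf{x}$ plus one further common neighbour gives an end triple containing a vertex of $\mathbf{x}$. There are only $O(n^2)$ such triples, so knowing that at most $\eps n^3$ triples are atypical says nothing about them. Even with three fresh vertices, the triples usable by the extension lemma must be triangles of $\J^{(2)}$ in suitable clusters. These are roughly a $d_2^3$-fraction of all triples, and a dense family of extensions of one fixed $\mathbf{x}$ need not meet this sparse family proportionally.

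The missing tool is the rooted-density property~\ref{itm: skeleton} of Lemma~\ref{lem: regular slice}. Applied to the $6$-vertex squared path rooted at $x_1,x_2,x_3$, it shows that at least a $(1/27+\alpha/2)$-fraction of the triples $x_4x_5x_6$ forming triangles of $\J$ in distinct clusters are extensions of $\mathbf{x}$. Averaging gives an edge $ijk$ of $R'$ with $|\J'_{ijk}|\ge|\J_{ijk}|/27$, and likewise for $y_3y_2y_1$ on disjoint clusters. Then Lemma~\ref{lma :ext}, with $\Hy'$ the two end triples, embeds the whole walk in one step, with repeated clusters blown up. This joins an extension of $\mathbf{x}$ to the reverse of an extension of $y_3y_2y_1$, because the lemma's exceptional proportion is far smaller than the proportion of such pairs. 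It also replaces your vertex-by-vertex embedding, which would otherwise need its own typicality bookkeeping.
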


The derivation of Lemma~\ref{lma: simpler connecting lemma} from Lemma~\ref{lma: tight connectivity at 7n/9} is presented in Section~\ref{sec: Connecting Lemma}, and the definitions and results concerning hypergraph regularity used in this argument are presented in Section~\ref{sec: Hypergraph Regularity}. 

The remaining elements of the proof of Theorem~\ref{thm: ourthm} follow an absorbing strategy, an approach which is now fairly commonplace for extremal problems in combinatorics. This combines the aforementioned connecting lemma with the following two lemmas; we note that Lemma~\ref{lma: simpler connecting lemma} plays a part in the proof of each of these results, and that the proof of Lemma~\ref{lma: path cover lemma} also makes use of hypergraph regularity. 

\begin{lemma}[Absorption Lemma] \label{lma: absorption lemma}
Let~$1/n \ll \beta \ll \alpha$. Let~$H$ be a~$3$-graph on~$n$ vertices with~$\delta_2(H) \ge (7/9 + \alpha)n$. Then there exists a squared-tight-path~$P$ in $H$ on at most~$7\sqrt{\beta} n$ vertices such that for all sets~$L \subseteq V(H) \setminus V(P)$ with~$|L| \le  \beta^2 n$, there is a squared-tight-path~$P'$ with~$V(P') = V(P) \cup L$ such that~$P$ and~$P'$ have the same initial and final triples. 
\end{lemma}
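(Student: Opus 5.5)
We follow the standard absorbing method of Rödl, Ruciński and Szemerédi, using Lemma~\ref{lma: simpler connecting lemma} to join the absorbers into one path.

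\textbf{Absorbers.} For a vertex $v$, a \emph{$v$-absorber} is a pair $(A,\mathbf{a},\mathbf{b})$ with the following properties. $A$ is a squared-tight-path of constant length $t$ (say $t=6$) from $\mathbf{a}$ to $\mathbf{b}$, with $v\notin V(A)$. There is also a squared-tight-path $A'$ with $V(A')=V(A)\cup\{v\}$ and the same ends $\mathbf{a},\mathbf{b}$.

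The simplest attempt takes $A=(a_1,a_2,a_3,b_1,b_2,b_3)$ and $A'=(a_1,a_2,a_3,v,b_1,b_2,b_3)$. For $A'$ we need
\[
a_1a_2a_3v,\quad a_2a_3vb_1,\quad a_3vb_1b_2,\quad vb_1b_2b_3
\]
to be tetrahedra. For $A$ we need $a_1a_2a_3b_1$, $a_2a_3b_1b_2$ and $a_3b_1b_2b_3$ to be tetrahedra.

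We count these configurations greedily. First choose $a_1,a_2$ so that $va_1a_2$ is an edge; there are $\Omega(n^2)$ such pairs. Each subsequent vertex must lie in the common neighbourhood of a bounded number of pairs, at most $5$ or $6$. With $\delta_2>(7/9+\alpha)n$, the common neighbourhood of $k$ pairs has size at least $(1-k\cdot 2/9+k\alpha)n$, which is positive only for $k\le 4$. So the naive greedy count may fail for vertices constrained by five or six pairs, e.g. $b_1$ must be adjacent to pairs $a_2a_3,a_2v,a_3v,a_1a_2,\dots$.

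To repair this, we use longer absorbers in which $v$ replaces a vertex $u$ that is then re-inserted elsewhere. Alternatively, we use the extra freedom that the ends of $A$ need only be edges (triples), with connections supplied later by Lemma~\ref{lma: simpler connecting lemma}. Then each absorbing step only needs $v$ and a few vertices to form one or two tetrahedra. For instance, let $A$ consist of two squared-tight-path segments whose ends are later connected. Inserting $v$ between triples $\mathbf{a}$ and $\mathbf{b}$ only requires the four tetrahedra above, with at most four pairs constraining each vertex, plus a correct choice of order. Alternatively, use the Lemma~\ref{lma: tight connectivity at 7n/9}-style reasoning to route around $v$. The key quantitative target is that every $v$ has at least $\gamma n^{t}$ absorbers, for some $\gamma=\gamma(\alpha)>0$.

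\textbf{Random selection.} Choose a family $\mathcal{F}$ by including each $t$-tuple independently with probability $p=\beta^{3/2} n^{1-t}$. By Chernoff and Markov bounds, with positive probability the following hold. $|\mathcal{F}|\le 2\beta^{3/2}n$. The number of intersecting pairs in $\mathcal{F}$ is $O(\beta^3 n)$; delete one member of each such pair and all non-absorbers. Every $v$ still has at least $\gamma\beta^{3/2}n/2>\beta^2 n$ absorbers in $\mathcal{F}$.

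\textbf{Connecting.} Apply Lemma~\ref{lma: simpler connecting lemma} with $\psi=3\beta^{3/2}$ to connect the final triple of each absorber to the initial triple of the next, in a chosen order. Take $X$ to be the union of the absorbers' vertices other than their ends, handling ends by a standard trick. The resulting paths are vertex-disjoint and use at most $\psi^{1/2}n\le 2\beta^{3/4}n$ vertices. This yields $P$ with $|V(P)|\le t|\mathcal{F}|+2\beta^{3/4}n\le 7\sqrt\beta n$.

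Given $L$ with $|L|\le\beta^2n$, assign the vertices of $L$ greedily to distinct absorbers. This is possible because each vertex has more than $|L|$ absorbers. Then swap each $A$ for its $A'$; the ends are unchanged, so the result is a squared-tight-path $P'$ with $V(P')=V(P)\cup L$.

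\textbf{Main obstacle.} The main obstacle is the absorber design under the $7/9$ codegree condition. Each new vertex can be forced into the common neighbourhood of at most four pairs, so the absorber must be arranged so that each vertex is so constrained. Otherwise we need a supersaturated argument, e.g. via Lemma~\ref{lma: tight connectivity at 7n/9} or regularity, to show that $\Omega(n^t)$ absorbers exist.
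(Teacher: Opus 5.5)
\textbf{Gap: the absorbers are never shown to exist.} Your framework is standard and works up to constants: random selection of absorbers, discarding overlaps, chaining them with Lemma~\ref{lma: simpler connecting lemma}, and assigning the vertices of $L$ greedily. However, the step that carries the content of the lemma is never proved. That step is showing that every $v$ has $\Omega(n^t)$ absorbers, and none of your candidate absorbers is shown to exist.

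You correctly note that the direct insertion $a_1a_2a_3b_1b_2b_3\to a_1a_2a_3vb_1b_2b_3$ cannot be counted greedily, since each $b_j$ must lie in the common neighbourhood of five pairs. Your `two segments whose ends are later connected' repair does not produce an absorber. By your own definition, the unabsorbed configuration must itself be a squared-tight-path from $\mathbf{a}$ to $\mathbf{b}$ on $V(A)$. So either $\mathbf{a}\mathbf{b}$ is a squared-tight-path, which brings back exactly the five-pair constraints. Or $\mathbf{a}$ and $\mathbf{b}$ are joined through extra vertices, and these are stranded once $v$ is inserted. Your other suggestions (`$v$ replaces a vertex $u$ that is re-inserted elsewhere', `route around $v$') are left undeveloped, and re-inserting $u$ is precisely where the difficulty lies.

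\textbf{The missing idea: a switching absorber.} This is what the paper uses. For $v$ and a partner $y$, the common link $L(v)\cap L(y)$ has minimum degree at least $(5/9+\alpha)n$. This yields $\Omega(n^3)$ triangles of $L(v)\cap L(y)$ that are also edges of $H$. Only a positive density of such triples is needed: since the $2$-blow-up of an edge has Tur\'an density $0$, supersaturation gives $\Omega(n^6)$ sequences $u_1\dots u_6$ for which both $u_1u_2u_3yu_4u_5u_6$ and $u_1u_2u_3vu_4u_5u_6$ are squared-tight-paths.

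The displaced $y$ is then re-inserted into a blow-up that tolerates extra vertices. The paper takes $y_1,\dots,y_4$ from a copy of $K_4^3(3)$ with classes $\{x_i,y_i,z_i\}$. Then both $x_1\dots x_4z_1\dots z_4$ and $x_1\dots x_4y_1\dots y_4z_1\dots z_4$ are squared-tight-paths, and there are many such copies since $\gamma(K_4^3)\le 2/3$. Because this absorbs $4$-tuples, the paper also reserves a copy of $K_5^3(4)$ to make $|L|$ divisible by four. Its path $a_1\dots a_5b_1\dots b_5c_1\dots c_5d_1\dots d_5$ survives deletion of any subset of $\{a_5,b_5,c_5\}$.

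If you want single-vertex absorbers as in your plan, take instead $y=a_5$ in a copy of $K_5^3(2)$ with classes $\{a_k,b_k\}$. Deleting $a_5$ from $a_1\dots a_5b_1\dots b_5$ leaves a squared-tight-path with the same ends. Such copies are plentiful by Theorem~\ref{lma: supersaturation, blowup density}, since $\gamma(K_5^3)\le 0.74$.

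Without some such construction and its count, the proof is incomplete.
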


\begin{lemma}[Path Cover Lemma] \label{lma: path cover lemma}
Let~$1/n \ll \gamma \ll \alpha$. Let~$H$ be a~$3$-graph on~$n$ vertices with~$\delta_2(H) \ge (7/9+\alpha)n$. Then for all disjoint pairs of ordered triples of vertices~$\mathbf{e}_1$ and~$\mathbf{e}_2$ that are edges of~$H$, there is a squared-tight-path from~$\mathbf{e}_1$ to~$\mathbf{e}_2$ in~$H$ which covers all but at most~$\gamma n$ vertices of~$H$.
\end{lemma}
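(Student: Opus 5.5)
We will prove the Path Cover Lemma by the standard regularity-plus-fractional-tiling strategy. Choose constants $1/n \ll \eps \ll d \ll \gamma \ll \alpha$ and apply the Regular Slice Lemma of Allen, B\"ottcher, Cooley and Mycroft to $H$. This yields a regular slice $\J$ with $t$ clusters of equal size $m$, together with a reduced $3$-graph $\R$ on the clusters. Its edges are the triples of clusters on which $H$ is $\eps$-regular with respect to $\J$ and has relative density at least $d$. The key inherited property is that almost all pairs of clusters have codegree at least $(7/9 + \alpha/2)t$ in $\R$. After deleting a small set of bad clusters and moving their vertices into a set of size at most $\gamma n/4$, we may assume that $\delta_2(\R) \ge (7/9+\alpha/4)t$.

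Next we find an almost perfect fractional structure in $\R$ along which long squared-tight-paths can be embedded. Since $7/9 > 3/4$, the Lo--Markstr\"om/Keevash--Mycroft bound gives an almost perfect $K^3_4$-tiling in $\R$. If that is unavailable in the required form, the weaker fractional version follows from the codegree condition by a greedy/LP argument. For each tetrahedron $\{A,B,C,D\}$ in the tiling, the four clusters, each split into suitable parts, support a squared-tight-path in $H$ following the cyclic pattern $A,B,C,D,A,B,C,D,\dots$. Every four consecutive vertices then lie in the four distinct clusters of a tetrahedron of $\R$. We embed it greedily, or via a regular-setting path-embedding lemma, using the regularity of all four triples together with the underlying $\J$-complexes. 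This covers all but $\eps' m$ vertices of each of the four clusters, provided we keep track of `typical' pairs and triples so that extension remains possible. This greedy covering inside one regular tetrahedron is the main technical step. We will use a counting/extension lemma for regular slices showing that a typical ordered triple $(a,b,c)$ has many extensions $d$ with all four triples present and $(b,c,d)$ again typical. We continue until fewer than $\eps' m$ unused vertices remain per cluster.

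This gives at most $t/4 + o(t)$ vertex-disjoint squared-tight-paths $Q_1,\dots,Q_k$ with $k \le t$, covering all but at most $\gamma n/2$ vertices. Since $t$ is bounded independently of $n$, we have $k \ll \psi n$. We then use Lemma~\ref{lma: simpler connecting lemma} with forbidden set $X$ equal to the internal vertices of the $Q_j$ apart from short reserved ends. To leave room, we first set aside a random reservoir of $\psi n$ vertices, or simply delete $O(\psi^{1/2} n)$ vertices from the ends of the paths. With it we join $\mathbf{e}_1$ to the initial triple of $Q_1$, the final triple of $Q_i$ to the initial triple of $Q_{i+1}$, and the final triple of $Q_k$ to $\mathbf{e}_2$. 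The ends are edges of $H$, as required, and the connecting paths use at most $\psi^{1/2}n$ vertices. One subtlety is that the connecting lemma requires $|X| \le \psi n$, whereas the paths $Q_j$ cover almost everything. We handle this by applying the connecting lemma in $H$ restricted to $V(H)\setminus \bigcup Q_j$ together with the reservoir.

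To make that valid, we choose the reservoir $R$ at the very start as a random set of size $\sqrt{\psi}n$. With high probability $H[R \cup \text{ends}]$ inherits codegree $(7/9+\alpha/2)|R|$ from the ends into $R$, and we perform the connections inside $R$ plus the endpoint triples. The regularity embedding is done in $H - R$. The uncovered vertices then total at most $\gamma n/2 + |R| \le \gamma n$, and the concatenation is a single squared-tight-path from $\mathbf{e}_1$ to $\mathbf{e}_2$.
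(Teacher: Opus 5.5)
Your overall plan is the paper's: a random reservoir, an almost-cover of the rest by boundedly many long squared-tight-paths obtained from a regular slice and an almost perfect $K^3_4$-tiling of the reduced graph, and linking through the reservoir with Lemma~\ref{lma: simpler connecting lemma}. Your final version of the linking step, applying the Connecting Lemma inside $H[R\cup\text{ends}]$, is fine. You do need to take the union bound over all pairs of $V(H)$ when choosing $R$, and to remove $V(\mathbf{e}_1)\cup V(\mathbf{e}_2)$ before tiling.

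\textbf{The gap.} You claim that after deleting a few bad clusters you may assume $\delta_2(\R)\ge(7/9+\alpha/4)t$. The slice only guarantees that all but $\theta\binom t2$ pairs of clusters have large codegree in $\R$. The bad pairs are those lying in many irregular triples, and nothing prevents them from being spread out, for instance as a perfect matching on the clusters. That configuration needs only $O(t^2)$ of the $\eps_3\binom t3$ irregular triples the slice is allowed. Deleting clusters removes a bad pair only if one of its two clusters is deleted, so you would have to delete a vertex cover of the bad-pair graph, which can have size $t/2$.

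So you cannot apply the Lo--Markstr\"om/Keevash--Mycroft tiling theorem to $\R$ as stated. The fallback ``greedy/LP argument'' does not rescue it either. It presupposes the same missing codegree condition, and a greedy argument at codegree $7t/9$ only guarantees a further tetrahedron while more than about $2t/3$ clusters remain uncovered.

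\textbf{How the paper handles it.} In Lemma~\ref{lma: path tiling lemma} it passes to a strongly dense subgraph $R'$ (Lemma~\ref{lma: find a strongly dense subgraph}), in which every pair lying in an edge has codegree about $(3/4+\alpha/3)t$. It then deletes the few clusters of low degree in $\partial R'$. Finally it applies the complex form of Keevash--Mycroft (Theorem~\ref{thm: almost tiling Keevash-Mycroft}) to the $4$-complex with levels $\partial R^*$, $R^*$, $\T(R^*)$. That theorem only requires each \emph{edge} of each level to extend, not every pair of clusters. An alternative repair is to randomly partition the clusters into bounded-size parts and use Lemma~\ref{Lma: min codeg}, which gives genuine minimum codegree in almost all parts.

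\textbf{A lesser weakness.} Your embedding inside each tetrahedron builds one long path greedily while keeping the current end triple ``typical''. This is not justified as written. Typicality is relative to the restricted complex on the unused vertices, which changes at every step. If the process has to restart, you must still keep the total number of paths bounded, or the Connecting Lemma cannot be applied inside the reservoir. The paper avoids this by repeatedly using Lemma~\ref{lma: regular restriction} and the Embedding Lemma (Lemma~\ref{lma: Embedding Lemma}). Each round embeds a squared-tight-path with exactly $cm$ vertices in each of the four clusters, until fewer than $\beta m$ vertices remain in each cluster. This uses at most $1/c$ paths per tetrahedron.
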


We give the proof of Lemma~\ref{lma: absorption lemma} in Section~\ref{sec: Absorption} and the proof of Lemma~\ref{lma: path cover lemma} in Section~\ref{sec: Path Cover Lemma}.
Our main result, Theorem~\ref{thm: ourthm}, then follows by combining Lemmas~\ref{lma: simpler connecting lemma},~\ref{lma: absorption lemma} and~\ref{lma: path cover lemma}. 

\begin{proof}[Proof of Theorem~\ref{thm: ourthm}]
Let $1/n \ll \gamma \ll \beta \ll \alpha$.
By Lemma~\ref{lma: absorption lemma}, there is a squared-tight-path~$P$ in~$H$ on at most~$7\sqrt{\beta} n$ vertices with the absorbing property stated in Lemma~\ref{lma: absorption lemma}. Let~$\mathbf{e}_1$ and~$\mathbf{e}_2$ be the initial and final triples of $P$ respectively, so $\mathbf{e}_1$ and $\mathbf{e}_2$ are disjoint edges of $H$. Let $\mathrm{int}(P) = V(P) \setminus \left(V(\mathbf{e}_1) \cup V(\mathbf{e}_2) \right)$. By Lemma~\ref{lma: path cover lemma} with~$H \setminus \mathrm{int}(P)$ playing the role of~$H$, there is a squared-tight-path~$\hat{P}$ from~$\mathbf{e}_2$ to~$\mathbf{e}_1$ in~$H$ which covers all vertices of~$H \setminus \mathrm{int}(P)$ except for a set $L$ of at most~$\gamma n$ vertices. As~$\gamma < \beta^2$, by Lemma~\ref{lma: absorption lemma} there is a squared-tight-path~$P'$ from~$\mathbf{e}_1$ to~$\mathbf{e}_2$ in $H$ such that $V(P') = V(P) \cup L$. Then~$P' \cup \hat{P}$ is the square of a tight Hamilton cycle in~$H$.\end{proof}

\section{Preliminary results and notation} \label{sec:notation}
  We will use constant hierarchies in our statements as follows: the phrase ``$a \ll b$'' means ``for every~$b > 0$ there exists $a_0 > 0$ such that for all $0 < a \leq a_0$ the following statements hold''. Hierarchies with more constants are defined analogously, and we implicitly assume that if a term of the form $1/m$ appears in such a hierarchy then~$m$ is a positive integer. We omit floors and ceilings when they do not affect the calculations. For $n \in \N$ we write $[n] := \{ 1, \dots, n\}$, and for a set $V$ and $k \in \N$ we write $\binom{V}{k}$ to denote the set of all subsets of~$V$ of size~$k$. For notational simplicity, we write $v_1\dots v_k$ for the set~$\{v_1, \dots, v_k\}$ or the ordered $k$-tuple~$(v_1, \dots, v_k)$; it will be clear from the context which interpretation is intended. 
  
A \emph{$k$-uniform hypergraph}~$H$ (or a \emph{$k$-graph} for brevity) consists of a vertex set $V(H)$ and an edge set $E(H) \subseteq \binom{V(H)}{k}$. We make the following definitions for $k$-graphs $H$. For a vertex subset $S \subseteq V(H)$, the \emph{neighbourhood} of~$S$ is $N_H(S) := \{T \subseteq V(H)\setminus S \colon S \cup T \in E(H)\}$ and the \emph{degree} of~$S$ is $\deg_H (S) := |N_H(S)|$. The \emph{minimum $\ell$-degree} of~$H$ is $\delta_{\ell}(H)  = \min \{ \deg_H(S) \colon S \in \binom{V(H)}{\ell}\}$. We also refer to the minimum $(k-1)$-degree $\delta_{k-1}(H)$ as the \emph{minimum codegree} of $H$. Sometimes we wish to consider the minimum degree over just those $(k-1)$-sets which are contained in at least one edge of $H$, for which we define the \emph{minimum positive codegree} of $H$ to be $$\delta_{k-1}^+(H) := \min \left\{\deg_H(S) \colon S \in \binom{V}{k-1}, \deg_H(S) > 0 \right\}.$$ For a set of vertices $U \subseteq V(H)$ we write $H[U]$ to denote the subgraph induced by~$U$, which has vertex set $U$ and edge set $\{e \in E(H) : e \subseteq U\}$. For $k$-graphs~$G$ and~$H$ we write $H \setminus G$ to denote the subgraph obtained by deleting~$V(G)$ from~$H$ and $H-G$ to denote the subgraph obtained by deleting~$E(G)$ from~$E(H)$. 
For a vertex set $U \subseteq V(H)$ we define $H\setminus U := H[V(H)\setminus U]$. 
For vertex sets~$S$ and~$W$, we write $N_H(S, W) = N_H(S) \cap \binom{W}{k- |S|}$ and $\deg_H(S, W)= |N_H(S, W)|$.
For a vertex~$v \in V(H)$, let~$L(v)$ denote the \emph{link graph of~$v$ in~$H$}, which is a~$(k-1)$-graph with vertex set~$V(H) \setminus \{v\}$ and~$E(L(v)) = N_H(v)$.  For an edge $e \in E(H)$ we write $\partial e := \binom{e}{k-1}$, and we write $\partial H$ to denote the $(k-1)$-graph on $V(H)$ with edge set $\bigcup_{e \in E(H)} \partial e$. 

Now let~$H$ be a~$3$-graph on vertex set~$V$, and let $W_1 = x_1 \dots x_{\ell}$ and $W_2 = y_1 \dots y_m$ be tight walks in~$H$. If $x_{\ell-1} = y_1$ and $x_{\ell}=y_2$, then we define $W_1W_2 := x_1\dots x_{\ell}y_3y_4\dots y_{m}$. Observe that $W_1W_2$ is a tight walk in $H$. Furthermore, if $W_1$ and $W_2$ are tight paths in $H$ and $V(W_1) \cap V(W_2) \setminus \{y_1, y_2\} = \emptyset$ then $W_1W_2$ is also a tight path in $H$. Similarly, if $W_1$ and $W_2$ are tight paths in $H$ and we also have $y_{m-1} = x_1$ and $y_{m}=x_2$ and $V(W_1) \cap V(W_2) \setminus \{x_1, x_2, y_1, y_2\} = \emptyset$, then $W_1W_2$ is a tight cycle in $H$.

For~$k, t \in \N$ we write $K_t^k$ for the complete $k$-graph on $t$ vertices, which has vertex set $[t]$ and whose edges are all sets $e \in \binom{[t]}{k}$.

\subsection{Extremal construction} \label{sec:extremal}

The following slight modification of a construction given by Pikhurko~\cite{Pikhurko} demonstrates that, if true, Conjecture~\ref{conj} would be best-possible up to the $\alpha n$ error term.

\begin{proof}[Proof of Proposition~\ref{construct}]
Let $V_1, V_2, V_3$ and $V_4$ be pairwise-disjoint sets of vertices with $\sum_{i \in [4]} |V_i| = n$ and $\lfloor n/4\rfloor \le |V_i| \le  \lceil n/4 \rceil$ for all $i \in [4]$. Let $H$ be the $3$-graph with vertex set $V := \bigcup_{i \in [4]} V_i$ whose edges are all triples $e \in \binom{V}{3}$ which satisfy one of the following mutually exclusive properties.
\begin{enumerate}[label={\rm(\roman*)}]
    \item \label{itm: 1 construction edges} $|e \cap V_1| = 2$, 
    \item \label{itm: 2 construction edges} $|e \cap V_1| = |e \cap V_i| = |e \cap V_j| = 1$ for distinct $i, j \in [4]\setminus\{1\}$,
    \item \label{itm: 3 construction edges} $|e \cap V_i|=3$ for some $i\in [4]\setminus\{1\}$, or
    \item \label{itm: 4 construction edges} $|e \cap V_i| = 1$ and $|e \cap V_j|=2$ for distinct $i, j \in [4]\setminus\{1\}$. 
\end{enumerate}
For $xy \in \binom{V}{2}$ with $x \in V_i$ and $y \in V_j$, we then have 
\begin{align*}
    N (xy) = 
    \begin{cases}
       V \setminus (V_1 \cup \{x,y\}) & \text{if $i =j$,}\\
       V \setminus (V_j \cup \{x\}) & \text{if $i=1 \ne j$,}\\
       V \setminus(V_k \cup \{x,y\}) & \text{if $\{i,j,k\} = [4] \setminus \{1\}$}.
    \end{cases}
\end{align*}
Hence $\delta_2(H) = \lfloor 3n/4 \rfloor  - 2$.

Every copy $K$ of~$K_4^3$ in~$H$ intersects $V_1$ in exactly $0$ or $2$ vertices. Indeed, there are no edges of~$H$ in~$V_1$, so $|K \cap V_1| \le 2$. If $|K \cap V_1| = 1$, then $e := V(K)\setminus V_1$ is an edge of $H$, so our choice of $E(H)$ implies that we cannot have $|e \cap V_i|=1$ for each~$i \in [4]\setminus\{1\}$. We must therefore have $|e \cap V_i| \geq 2$ for some $i \in [4]\setminus\{1\}$, and so $K$ contains an edge of~$H$ with two vertices in~$V_i$ and one in~$V_1$, but again this is not possible by our choice of edges.

Let $\C = v_1v_2\dots v_{\ell}$ be the square of a tight cycle in~$H$. Note that each set of four consecutive vertices of~$\C$ forms a copy of $K_4^3$, and so has either $0$ or $2$ vertices in $V_1$. It follows that either every four consecutive vertices of $\C$ have no vertices in $V_1$, or every four consecutive vertices of $\C$ have precisely two vertices in $V_1$. In the former case we have $\ell \leq |V_2 \cup V_3 \cup V_4| \leq \lceil 3n/4 \rceil$, and in the latter case we have $\ell \le 2|V_1| \le 2\lceil n/4 \rceil$. This proves the proposition.\end{proof}

\subsection{Tur\'an density}

Given a $k$-graph $F$ and $n \in \N$, the \emph{Tur\'an number} $\ex(n, F)$ is the maximum number of edges that an $n$-vertex $k$-graph can have without having~$F$ as a subgraph. The asymptotics of this quantity are more interesting, hence the \emph{Tur\'an density} $\pi(F)$ of~$F$ is defined as $$\pi(F) = \lim_{n \to \infty} \frac{\ex(n, F)}{\binom{n}{k}}.$$ It is not hard to see that this limit exists for every $k$-graph $F$.

Several variants of Tur\'an density have been considered. Mubayi and Zhao~\cite{Mubayi-Zhao} introduced the \emph{codegree Tur\'an number} $\ex_{\mathrm{co}}(n, F)$ as the maximum $d$ such that there exists an $n$-vertex $k$-graph with minimum codegree~$d$ which does not have~$F$ as a subgraph. They defined the \emph{codegree Tur\'an density} of~$F$ as $$\gamma(F) = \lim_{n \to \infty} \frac{\ex_{\mathrm{co}}(n, F)}{n}$$ and proved that this limit exists for every $k$-graph $F$. 

Given a $k$-graph $F$ and a positive integer~$t$, we write $F(t)$ to denote the $t$-blowup of~$F$, which is the $k$-graph formed as follows. For each vertex $v \in V(F)$ let $U_v$ be a set of $t$ vertices, with the sets~$U_v$ being pairwise vertex-disjoint. The vertex set of $F(t)$ is $V := \bigcup_{v \in V(F)} U_v$, and a set $e \in \binom{V}{k}$ is an edge of $F(t)$ if and only if there exists an edge $f \in F$ with $|e \cap U_v| = 1$ for each~$v \in f$.

We use the following result on supersaturation and the Tur\'an density and codegree Tur\'an density of blowups. Supersaturation for Tur\'an density was discovered by Erd\H{o}s and Simonovits~\cite{Supersaturation}, while for codegree Tur\'an density it was shown by Mubayi and Zhao~\cite{Mubayi-Zhao}. The following statement can be deduced by combining results from Keevash's survey~\cite{Keevash} with those of Mubayi and Zhao~\cite{Mubayi-Zhao}.

\begin{theorem}[{cf. \cite[Lemmas 2.1, 2.2]{Keevash}}, {\cite[Proposition 1.4]{Mubayi-Zhao}}] \label{lma: supersaturation, blowup density}~
Let $1/n \ll \beta \ll \alpha, 1/k, 1/m, 1/t$. Let $F$ be a $k$-graph on $m$ vertices and $G$ be a $k$-graph on $n$ vertices with $|E(G)| \ge (\pi(F) + \alpha)\binom{n}{k}$ or $\delta_{k-1}(G) \ge (\gamma(F) + \alpha)n$. Then $G$ contains at least $\beta \binom{n}{t m}$ copies of~$F(t)$.
\end{theorem}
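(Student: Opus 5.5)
This is a compilation of known results, so the plan is to assemble it from the cited sources rather than prove it from scratch. The two hypotheses (edge density and codegree) are handled separately, and in each case we chain two ingredients.

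\emph{Step one: blow-ups do not change either density.}
\begin{itemize}
\item For Tur\'an density, \cite[Lemma~2.1]{Keevash} gives $\pi(F(t)) = \pi(F)$. The proof is the standard one. An $F(t)$-free graph with density $\pi(F)+\alpha$ contains many copies of $F$ by supersaturation. Averaging over $m$-sets and applying Kővári--Sós--Turán in the auxiliary $m$-partite $m$-graph of copies of $F$ then yields a copy of $F(t)$.
\item For codegree density, \cite[Proposition~1.4]{Mubayi-Zhao} gives $\gamma(F(t)) = \gamma(F)$ in the same spirit.
\end{itemize}
Consequently, in either case $G$ satisfies the hypothesis with $F(t)$ in place of $F$, with the same $\alpha$.

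\emph{Step two: supersaturation for the blow-up.} We apply supersaturation to the $k$-graph $F' := F(t)$, which has $tm$ vertices.
\begin{itemize}
\item In the edge case, \cite[Lemma~2.1/2.2]{Keevash} (Erd\H{o}s--Simonovits averaging) applies directly. Take $M$ large depending on $\alpha, tm$. At least an $\alpha/2$ fraction of $M$-subsets $S$ satisfy $|E(G[S])| \ge (\pi(F')+\alpha/2)\binom{M}{k}$, and each such $G[S]$ contains a copy of $F'$. Each copy lies in $\binom{n-tm}{M-tm}$ of the sets $S$. Dividing gives at least $c\binom{n}{tm}$ copies, with $c$ depending only on $\alpha, M$.
\item In the codegree case, the analogous step is that a random $M$-subset $S$ satisfies $\delta_{k-1}(G[S]) \ge (\gamma(F')+\alpha/2)M$ with high probability. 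This follows from Chernoff/hypergeometric concentration and a union bound over the $\binom{M}{k-1}$ sets, which is the supersaturation argument of Mubayi and Zhao. Hence $G[S]$ contains $F'$ once $M$ exceeds the threshold for $\ex_{\mathrm{co}}(M,F') < (\gamma(F')+\alpha/2)M$, and the same double count applies.
\end{itemize}

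We then choose $\beta$ below both constants $c$, and take $n_0$ large enough for the concentration and limit statements.

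\emph{Expected obstacle.} The main point needing care is the order of quantifiers. In the codegree case, the threshold $M$ must depend only on $\alpha, k, m, t$, which holds since $\gamma(F')$ is a limit. The concentration step also requires $n \gg M$. Everything else is routine bookkeeping.
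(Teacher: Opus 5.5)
Your proposal is correct and is essentially the paper's approach. The paper gives no argument beyond citing Keevash's survey and Mubayi--Zhao, and your deduction is exactly that combination: blow-up invariance $\pi(F(t))=\pi(F)$ and $\gamma(F(t))=\gamma(F)$, then supersaturation for $F(t)$ via averaging over random $M$-sets. One small refinement to your quantifier remark: $M$, and hence $\beta$, is independent of $F$ not merely because $\pi(F(t))$ and $\gamma(F(t))$ are limits, but because there are only finitely many $k$-graphs on $m$ vertices, so you may take the largest threshold over all of them.
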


We also use the following result by Balogh, Clemen and Lidický~\cite{Balogh-Clemen-Lidicky} which provides an upper bound on the codegree Turán density of~$K_5^3$.

\begin{theorem}[{\cite[Theorem 1.2, Table 1]{Balogh-Clemen-Lidicky}}] \label{Thm: codegree density of K35}
    $\gamma(K_5^3) \le 0.74$.  
\end{theorem}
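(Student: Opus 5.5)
This bound is imported from Balogh, Clemen and Lidick\'y rather than proved here, but I would establish a statement of this type by the flag algebra method applied to codegree Tur\'an problems. The plan is to show that every sufficiently large $K_5^3$-free $3$-graph $G$ on $n$ vertices has $\delta_2(G) \le (0.74 + o(1))n$.

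\textbf{Step 1: reduction to a density inequality.} Let $G$ be $K_5^3$-free with $\delta_2(G) = cn$. For every pair $xy$, the normalised codegree $\deg_G(xy)/(n-2)$ is at least $c$. Averaging over pairs is too weak, since it only yields an edge-density bound. Instead I would use the standard codegree trick. Consider the type $\sigma$ consisting of two labelled vertices, and the flag $F_\sigma$ consisting of $\sigma$ together with one extra vertex forming an edge with it. Then
$$
\bigl(F_\sigma - c\,\mathbf{1}_\sigma\bigr)\cdot T \;\ge\; 0
$$
pointwise in every $\sigma$-rooting, for any nonnegative $\sigma$-flag combination $T$. Averaging (unlabelling) such products gives valid linear inequalities in the densities of small $K_5^3$-free $3$-graphs. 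These are combined with the usual positive semidefinite constraints $\llbracket v^{\mathsf T} Q v \rrbracket_\tau \ge 0$ over several types $\tau$.

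\textbf{Step 2: the semidefinite program.} Fix a flag size $N$; taking $N=6$ or $7$ is presumably what is needed to reach $0.74$. Enumerate all $K_5^3$-free $3$-graphs on $N$ vertices up to isomorphism. Write the assertion ``the codegree constraints and the SDP constraints force a contradiction when $c > 0.74$'' as a feasibility SDP. The unknowns are positive semidefinite matrices $Q_\tau$ and nonnegative multipliers on the products $(F_\sigma - c)\cdot T$. The requirement is that the resulting combination is strictly negative on every $N$-vertex $K_5^3$-free graph. Since all densities sum to $1$, such a certificate shows that no limit object with minimum codegree density above $0.74$ exists. Passing from limits back to finite $n$ is standard via compactness.

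\textbf{Step 3: rigorous certificate.} The SDP is solved numerically, for instance with CSDP. The approximate solution is then rounded to exact rationals, with $Q_\tau$ kept positive semidefinite, for example by projecting and keeping a small margin. Every inequality is then verified in exact arithmetic.

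\textbf{Main obstacle.} The hard part is Step 2. The number of $K_5^3$-free $3$-graphs on $7$ vertices is large, so the program is enormous and numerically delicate. Choosing which types and which codegree product flags to include, to keep the program tractable while still reaching $0.74$, is where the difficulty lies. My first attempt would use $N=6$ with all types on $2$ and $4$ vertices plus the codegree products; I would move to $N=7$ only if the bound obtained is not below $0.74$. Since the paper only needs some bound strictly below $7/9 \approx 0.778$, any certificate below that threshold would suffice for its purposes.
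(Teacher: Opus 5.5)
Your plan is correct and follows essentially the route of the cited source. The paper gives no proof of its own and imports the bound from Balogh, Clemen and Lidick\'y, whose result is a computer-assisted flag-algebra certificate of exactly the kind you describe, with the minimum-codegree hypothesis encoded by multiplying the $2$-rooted inequality $F_\sigma - c \ge 0$ by nonnegative flags and combining these with semidefinite constraints. You are also right that the paper only needs some bound strictly below $7/9$, both where it finds a $K_5^3$ in $R[W]$ and where it finds a $K_5^3(4)$ via supersaturation.
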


\subsection{Probabilistic tools}

Our arguments make use of the following standard probabilistic inequalities. For a positive integer~$n$ and $p \in [0, 1]$, the \emph{binomial distribution} $\mathrm{Bin}(n, p)$ is the number of successes from~$n$ independent trials, where the probability of success in each trial is $p$. Now let $N, n, m$ be positive integers with $n, m \le N$, and fix a set $T$ of size $N$ and a subset $S \subseteq T$ of size $|S| = m$. The \emph{hypergeometric distribution}~${\mathrm{Hyp}}(N, n, m)$ is the distribution of the random variable $X = |S \cap R|$, where $R \subseteq T$ of size $|R| = n$ is selected uniformly at random among all subsets of~$T$ with~$n$ elements.

\begin{lemma}[Markov's inequality cf. {\cite[Equation 1.3]{MR1782847}}] \label{lma: Markov}
    If $t > 0$ and $X$ is a non-negative random variable, then $\mathbb{P}[X \ge t] \le {\mathbb{E}X}/{t}$.
\end{lemma}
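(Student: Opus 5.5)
The statement is Markov's inequality: for a non-negative random variable $X$ and $t>0$, $\mathbb{P}[X \ge t] \le \mathbb{E}X/t$. The plan is to compare $X$ pointwise with a scaled indicator and then take expectations.

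Consider the event $A = \{X \ge t\}$ and its indicator random variable $\mathbf{1}_A$. The key inequality is
\[
t\,\mathbf{1}_A \le X
\]
almost surely, which we check on the two parts of the sample space. On $A$, the left side equals $t$ and $X \ge t$ by definition of $A$. Off $A$, the left side is $0$ and $X \ge 0$ by the non-negativity hypothesis. This second case is the only place non-negativity is used, and it is essential there.

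Next we take expectations of both sides. Expectation is monotone for non-negative random variables, and both sides are non-negative, so the comparison is meaningful even if $\mathbb{E}X = \infty$; in that case the claimed bound is trivial anyway. This gives
\[
t\,\mathbb{P}[A] = \mathbb{E}[t\,\mathbf{1}_A] \le \mathbb{E}X.
\]
Dividing by $t>0$ yields the claim.

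There is no real obstacle here. The only points needing care are the use of non-negativity to handle the complement of $A$, and the remark that monotonicity of expectation is valid in the extended sense, so the case $\mathbb{E}X=\infty$ needs no separate treatment.
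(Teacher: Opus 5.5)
Your argument is correct: the pointwise bound $t\,\mathbf{1}_{\{X \ge t\}} \le X$ followed by monotonicity of expectation is the standard proof of Markov's inequality. The paper does not prove this statement itself but only cites it from the probabilistic literature, so there is no separate argument in the paper to compare against.
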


\begin{lemma}[Chernoff's inequality cf. {\cite[Remark 2.5]{MR1782847}}] \label{Lma: Chernoff}
    If $0< \eps\le 3/2$ and $X\sim \mathrm{Bin}(n, p)$, then $\mathbb{P}(|X - \mathbb{E}X| \ge \eps \mathbb{E}X) \le 2e^{-\frac{\eps^2}{3}\mathbb{E}X}.$
\end{lemma}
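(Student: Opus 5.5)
This is the standard exponential-moment (Chernoff--Cramér) argument; I would prove the two tails separately and add them. Write $\mu = \mathbb{E}X = np$ and $X = \sum_{i \le n} X_i$ with the $X_i$ independent Bernoulli$(p)$ variables. For any $\lambda > 0$, Markov's inequality (Lemma~\ref{lma: Markov}) applied to $e^{\lambda X}$ gives
\[
\mathbb{P}(X \ge (1+\eps)\mu) \le e^{-\lambda(1+\eps)\mu}\,\mathbb{E}e^{\lambda X} = e^{-\lambda(1+\eps)\mu}\prod_{i \le n}\bigl(1+p(e^\lambda-1)\bigr) \le \exp\bigl(\mu(e^\lambda - 1) - \lambda(1+\eps)\mu\bigr),
\]
using $1+x \le e^x$. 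Choosing $\lambda = \ln(1+\eps)$ yields $\mathbb{P}(X \ge (1+\eps)\mu) \le \exp(-\mu\varphi(\eps))$ with $\varphi(x) := (1+x)\ln(1+x) - x$. Symmetrically, applying Markov to $e^{-\lambda X}$ and taking $\lambda = -\ln(1-\eps)$ gives $\mathbb{P}(X \le (1-\eps)\mu) \le \exp(-\mu\varphi(-\eps))$ for $0<\eps<1$.

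The analytic core is the pair of elementary inequalities
\[
\varphi(x) \ge \frac{x^2}{2(1+x/3)} \quad (x \ge 0), \qquad \varphi(-x) \ge \frac{x^2}{2} \quad (0 \le x < 1).
\]
For each, both sides vanish at $x=0$ together with their first derivatives. So it suffices to compare derivatives. For the second inequality, $\frac{d}{dx}\varphi(-x) = -\ln(1-x) \ge x$. For the first, I would check that the function $g(x) := \varphi(x) - x^2/(2+2x/3)$ satisfies $g(0)=g'(0)=0$ and $g''(x) = \frac{1}{1+x} - \frac{27}{(3+x)^3} \ge 0$. The last inequality is equivalent to $(3+x)^3 \ge 27(1+x)$, which holds on expanding.

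Now $2(1+\eps/3) \le 3$ exactly when $\eps \le 3/2$. This is precisely where the hypothesis $\eps \le 3/2$ enters. So for $0 < \eps \le 3/2$ the upper tail is at most $e^{-\eps^2\mu/3}$, and for $\eps<1$ the lower tail is at most $e^{-\eps^2\mu/2} \le e^{-\eps^2\mu/3}$. It remains to treat the lower tail for $\eps \ge 1$. If $\eps > 1$, the event $X \le (1-\eps)\mu < 0$ is empty (assuming $\mu>0$; the case $\mu = 0$ is trivial since the bound is then $2$). If $\eps = 1$, then $\mathbb{P}(X \le 0) = (1-p)^n \le e^{-\mu} \le e^{-\mu/3}$. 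A union bound over the two tails gives the factor $2$. The only step needing any care is the calculus inequality for $\varphi$; everything else is routine.
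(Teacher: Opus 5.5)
Your proof is correct. The exponential-moment bound, the comparison $\varphi(x)\ge x^2/(2(1+x/3))$ via $\varphi''(x)=1/(1+x)\ge(1+x/3)^{-3}$, the observation that $\eps\le 3/2$ gives $2(1+\eps/3)\le 3$, and your separate treatment of the lower tail for $\eps\ge 1$ and of $\mu=0$ all check out.

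The paper gives no proof of its own and simply cites Janson--{\L}uczak--Ruci\'nski. Your route is essentially the standard one found there: their Theorem~2.1 for the binomial tails, followed by the corollary deducing the $\eps\le 3/2$ form.
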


\begin{lemma}[{Hoeffding's inequality cf.~\cite[Theorem 2.10]{MR1782847}}]\label{Lma: Chernoff hyp}
    If $0< \eps \le 3/2$ and $X\sim {\mathrm{Hyp}}(N, n, m)$, then $\mathbb{P}[|X-\mathbb{E}X| \ge \eps \mathbb{E}X] \le 2e^{-\frac{\eps^2}{3}\mathbb{E}X}$.
\end{lemma}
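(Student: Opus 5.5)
The inequality is stated as a citation to~\cite[Theorem~2.10]{MR1782847}. If I were to prove it, I would reduce it to Chernoff's inequality (Lemma~\ref{Lma: Chernoff}) by comparing moment generating functions. Let $X\sim\mathrm{Hyp}(N,n,m)$, and let $B\sim\mathrm{Bin}(n,p)$ with $p=m/N$, so that $\mathbb{E}X=\mathbb{E}B=nm/N=:\mu$.

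The standard proof of Lemma~\ref{Lma: Chernoff} does not use independence directly. It only uses the bound
\[
\mathbb{E}e^{\lambda B}\le \exp\bigl(\mu(e^\lambda-1)\bigr)\qquad\text{for all real }\lambda,
\]
followed by Markov's inequality (Lemma~\ref{lma: Markov}) applied to $e^{\lambda B}$ or $e^{-\lambda B}$. One then optimises over $\lambda$ and uses the elementary estimates that give exponent $\eps^2\mu/3$ for both tails in the range $0<\eps\le 3/2$. Summing the two tails gives the factor $2$. So the plan is to show that $\mathbb{E}e^{\lambda X}\le\mathbb{E}e^{\lambda B}$ for every real $\lambda$. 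The rest of the Chernoff argument then applies verbatim to $X$.

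For the key comparison, write $R=\{r_1,\dots,r_n\}$, where $r_1,\dots,r_n$ are the first $n$ elements of a uniformly random ordering of $T$. Then $X=\sum_{i\le n} I_i$ with $I_i=\mathbf{1}[r_i\in S]$, and each $I_i$ is Bernoulli$(p)$. I would establish
\[
\mathbb{E}\prod_{i\le n}e^{\lambda I_i}\le\prod_{i\le n}\mathbb{E}e^{\lambda I_i}=\bigl(1-p+pe^\lambda\bigr)^n=\mathbb{E}e^{\lambda B}.
\]
I would prove this by induction on $n$, conditioning on $I_1,\dots,I_{n-1}$. Given that $k$ of the first $n-1$ draws lie in $S$, we have $\mathbb{P}(I_n=1)=(m-k)/(N-n+1)$. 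This is decreasing in $k$, while $\prod_{i<n}e^{\lambda I_i}$ is increasing in $k$ when $\lambda>0$. A Chebyshev/FKG-type correlation inequality for oppositely monotone functions of $k$ then lets us split off the factor $\mathbb{E}e^{\lambda I_n}$. The case $\lambda<0$ is symmetric: the roles of the monotonicities swap, but the two functions are still oppositely monotone.

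Alternatively, I would use Hoeffding's original argument. The sum of a sample drawn without replacement can be expressed as a conditional expectation of a sample drawn with replacement, and Jensen's inequality applied to the convex map $x\mapsto e^{\lambda x}$ then gives the comparison.

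I expect the main obstacle to be this comparison step, that is, making the negative-dependence inequality rigorous. Once $\mathbb{E}e^{\lambda X}\le \exp(\mu(e^\lambda-1))$ is in hand, the tail estimates are identical to those for the binomial case, with exactly the same constants.
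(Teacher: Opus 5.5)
Your proposal is correct. The induction is sound: the conditional law of $I_n$ depends only on $K=\sum_{i<n}I_i$, the factors $e^{\lambda K}$ and $1+(e^\lambda-1)\frac{m-K}{N-n+1}$ are oppositely monotone in $K$ for either sign of $\lambda$, and the second factor has mean $1-p+pe^\lambda$, so Chebyshev's inequality gives $\mathbb{E}e^{\lambda X}\le(1-p+pe^\lambda)^n\le e^{\mu(e^\lambda-1)}$; the binomial Chernoff computation then carries over verbatim (for $1<\eps\le 3/2$ the lower-tail event is empty).

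The paper gives no proof of its own and only cites~\cite[Theorem~2.10]{MR1782847}, which uses the same reduction: bound the moment generating function by that of $\mathrm{Bin}(n,m/N)$ via Hoeffding's comparison for convex functions, then rerun Chernoff. Your induction simply gives a self-contained proof of that comparison step.
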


\section{Tight Connectivity} \label{sec: Tight Connectivity}

In this section we prove Lemma~\ref{lma: tight connectivity at 7n/9}, which states that if $H$ is a 3-graph on $n$ vertices with $\delta_2(H) > 7n/9$ then the tetrahedral 4-graph $\T(H)$ of $H$ is tightly connected. 

Let~$\mathscr{T}$ be the set of distinct tight components of~$\T(H)$. Recall that these components $T \in \mathscr{T}$ partition the set of copies of $K^3_4$ in~$H$.
Throughout this section the 3-graphs~$H$ we work with will have the property that every edge of~$H$ is contained in at least one copy of~$K_4^{3}$. 
(Observe that having minimum positive codegree $\delta_2^+(H) > 2n/3$ is a sufficient condition for this.)
Moreover, recalling that the tight components~$T \in \mathscr{T}$ partition the set of copies of~$K^3_4$ in~$H$, we observe that all of the copies of~$K^3_4$ which contain a given edge $e \in E(H)$ must be contained in the same tight component~$T \in \mathscr{T}$. So for each edge $e \in E(H)$ there is precisely one tight component $T \in \mathscr{T}$ which contains the copies of~$K^3_4$ containing~$e$ (of which there is at least one); we denote this tight component by $\phi(e)$, giving an edge-colouring $\phi : E(H) \to \mathscr{T}$ of $H$ (where the colours are the tight components in $\mathscr{T}$).
We identify each tight component $T \in \mathscr{T}$ with the $3$-graph on vertex set $V(H)$ and whose edges are all edges~$e \in E(H)$ with $\phi(e) = T$; in this way we may discuss $N_{T}(S)$, $N_{\partial T } (S)$, $\deg_{T}(S)$ and $\deg_{\partial T} (S)$, as defined earlier. 

For each~$v \in V(H)$, we write $\phi_v(uw) := \phi(uvw)$. 
So $\phi_v$ is an edge-colouring of $L(v)$ (the link graph of~$v$). 
Also, for each~$S \subseteq V(H)$ with $|S| \le 2$, let $\phi(S) = \{\phi(e) \colon  e \in E(H) \text{ and } S \subseteq e\}$, so $\phi(S)$ is the set of colours of edges which contain $S$ as a subset (this set may contain multiple elements). If $T \in \phi(S)$, then we say that $S$ \emph{is in tight component}~$T$. 

Since two copies of $K^3_4$ which share three vertices must lie in the same tight component, we have the following fact.

\begin{fact} \label{fct: common nbr of adjacent triangles}
    Let~$H$ be a~$3$-graph. 
    Suppose that $e, e' \in E(H)$ have $|e\cap e' | = 2$ and $\bigcap_{P \in \partial e \cup \partial e'} N_H(P) \neq \emptyset$.
    Then~$e$ and~$e'$ are in the same tight component of~$\T(H)$. 
\end{fact}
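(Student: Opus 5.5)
This is immediate from the definitions, so the plan is simply to exhibit a common tetrahedron. Write $e = xyz$ and $e' = wyz$, so that $e\cap e' = \{y,z\}$ and $x \neq w$. The sets in $\partial e \cup \partial e'$ are the five pairs $xy, xz, yz, wy, wz$. The hypothesis gives a vertex $u \in \bigcap_{P \in \partial e \cup \partial e'} N_H(P)$. Note that $u \notin \{x,y,z,w\}$, since the neighbourhood of a pair $P$ lies in $V(H)\setminus P$, and each of $x,y,z,w$ lies in at least one of the five pairs.

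We then check that $xyzu$ spans a copy of $K^3_4$. Its four triples are $xyz = e \in E(H)$, together with $xyu$, $xzu$ and $yzu$, which are edges because $u \in N_H(xy)\cap N_H(xz)\cap N_H(yz)$. In the same way, $wyzu$ spans a copy of $K^3_4$: $wyz = e'$ is an edge, and so are $wyu$, $wzu$ and $yzu$.

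These two tetrahedra are edges of $\T(H)$ and share the three vertices $y,z,u$. So the sequence $(x,y,z,u,w)$ has consecutive $4$-sets $xyzu$ and $yzuw$, both edges of $\T(H)$, and it is therefore a tight walk in $\T(H)$ from one to the other. Hence the two tetrahedra lie in the same tight component $T$.

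Finally, by the definition of $\phi$, every copy of $K^3_4$ containing $e$ lies in $\phi(e)$, and similarly for $e'$. It follows that $\phi(e) = T = \phi(e')$, which is the claim. There is no real obstacle here; the only care needed is to confirm that $u$ is distinct from $x,y,z,w$, so that the $4$-sets are genuine tetrahedra, and this holds automatically by the definition of $N_H$.
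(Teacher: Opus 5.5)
Your proof is correct and follows the paper's own justification: the common neighbour $u$ gives two copies of $K^3_4$, on $e\cup\{u\}$ and $e'\cup\{u\}$, which share the three vertices $(e\cap e')\cup\{u\}$ and therefore lie in the same tight component of $\T(H)$. Your check that $u\notin e\cup e'$ and your explicit tight walk $(x,y,z,u,w)$ simply spell out details the paper leaves implicit.
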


Our definition of $\phi(S)$ ensures that $\phi(uv) \subseteq \phi(u) \cap \phi(v)$ and $\phi(uvw) \in \phi(uv) \cap \phi(uw) \cap \phi(vw)$. The following statement follows immediately, since if $\delta_2(H) > 0$ then for each pair $u,v$ of distinct vertices of $H$ there is an edge of $H$ containing both $u$ and $v$.

\begin{fact} \label{fct: colours are intersecting family}
    Let $H$ be a~$3$-graph on~$n$ vertices with $\delta_2(H) > 0$ in which each edge is contained in a copy of $K^3_4$. 
    Then for all~$u, v \in V(H)$ we have~$\phi(u) \cap \phi(v) \neq \emptyset$. 
In particular, if there exists $u \in V(H)$ satisfying $|\phi(u)| = 1$, then $\phi(u) \subseteq \phi(v)$ for all $v \in V(H)$.     
\end{fact}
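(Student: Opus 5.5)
The plan is to prove both assertions directly from the definitions, using only that $\delta_2(H) > 0$ and the containment $\phi(uv) \subseteq \phi(u) \cap \phi(v)$ noted just before the statement. The colouring $\phi$ is well-defined on all of $E(H)$ because every edge is assumed to lie in a copy of $K^3_4$, and that copy lies in exactly one tight component.

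First, I fix $u, v \in V(H)$ and treat the case $u \neq v$. Since $\delta_2(H) > 0$, the pair $uv$ lies in at least one edge, so there is a vertex $w$ with $uvw \in E(H)$. The colour $\phi(uvw)$ then belongs to $\phi(uv)$ by the definition of $\phi(S)$ for $S = \{u,v\}$. Because $uvw$ contains both $\{u\}$ and $\{v\}$, we get $\phi(uvw) \in \phi(u) \cap \phi(v)$, so this intersection is non-empty.

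Next, the case $u = v$ only requires $\phi(u) \neq \emptyset$. This holds because $u$ lies in some edge: pick any $x \neq u$ (we may take $n \ge 3$, as otherwise $\delta_2(H) > 0$ is impossible) and apply the codegree condition to $ux$.

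For the second assertion, suppose $\phi(u) = \{T\}$. For any $v \in V(H)$, the first part gives $\phi(u) \cap \phi(v) \neq \emptyset$. The only element available in this intersection is $T$, so $T \in \phi(v)$, which means $\phi(u) \subseteq \phi(v)$.

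There is no real obstacle here. The only point needing care is that $\phi$ is genuinely defined on every edge, which is exactly why the hypothesis that each edge lies in a copy of $K^3_4$ is needed.
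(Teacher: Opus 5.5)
Your proposal is correct and matches the paper's argument: since $\delta_2(H)>0$, the pair $uv$ lies in some edge $uvw$, so $\phi(uvw) \in \phi(uv) \subseteq \phi(u)\cap\phi(v)$, and the second assertion follows at once. Your separate treatment of the case $u=v$, using that every vertex lies in an edge, is a small detail the paper leaves implicit.
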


The next fact gives a lower bound on the number of vertices covered by each tight component, and holds because for each pair $a,b$ of distinct vertices of an edge~$e$ there are at most $n - \delta^+_2(H)$ vertices $u \in V(H)$ for which $abu$ is not an edge of $H$, and so at least $n - 3(n - \delta^+_2(H))$ vertices of~$H$ form an edge with all three pairs of vertices of $e$. 
Recall that $\delta_2^+(H)$ is the minimum positive codegree of~$H$, and (for the second part of the statement) that all the copies of $K^3_4$ extending a given edge lie in the same $T \in \mathscr{T}$.

\begin{fact}\label{fct: tetrahedral codegree}
    Let~$\alpha>0$ and~$H$ be a~$3$-graph on~$n$ vertices with~$\delta_2^+(H) \ge (7/9+\alpha)n$. Then every edge~$e \in E(H)$ has $\deg_{\T(H)}(e) \geq (1/3+3\alpha)n$. 
    If additionally $\delta_2(H) >0$, then for all $xy \in \binom{V(H)}2$ and $T \in \phi(xy)$ we have $|V(T)| \geq \deg_{T}(xy) \ge (1/3+3\alpha)n$. 
\end{fact}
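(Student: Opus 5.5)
The plan is a direct counting argument, done in two parts. Fix an edge $e = abc \in E(H)$. Each of the pairs $ab$, $ac$, $bc$ has positive codegree, since it lies in the edge $e$. Hence each pair has codegree at least $\delta_2^+(H) \ge (7/9+\alpha)n$.

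For the first part, consider the pair $ab$. The number of vertices $u \in V(H)\setminus\{a,b\}$ with $abu \notin E(H)$ is at most
\[
(n-2) - \deg_H(ab) \le n - (7/9+\alpha)n = (2/9-\alpha)n.
\]
The same bound holds for $ac$ and $bc$. Call a vertex $u \notin e$ \emph{good} if $abu$, $acu$ and $bcu$ are all edges. Every $u \notin e$ that fails to be good is counted by at least one of the three pairs. So the number of good vertices is at least
\[
(n-3) - 3(2/9-\alpha)n + (\text{small correction}).
\]
More carefully, count the non-neighbours of each pair inside $V(H)\setminus e$. Vertex $c$ is a neighbour of $ab$, so at most $(n-3) - ((7/9+\alpha)n - 1)$ vertices of $V(H)\setminus e$ are non-neighbours of $ab$. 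Hence at least
\[
(n-3) - 3\bigl((n-3) - (7/9+\alpha)n + 1\bigr) \ge (1/3+3\alpha)n
\]
vertices of $V(H)\setminus e$ are good, and the final inequality holds with room to spare since $-3+3(3-1)=3\ge0$. For every good $u$, the set $e\cup\{u\}$ spans a $K^3_4$ in $H$, because $e$ itself is an edge. So $\deg_{\T(H)}(e) \ge (1/3+3\alpha)n$.

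For the second part, assume $\delta_2(H)>0$, and take $xy$ and $T \in \phi(xy)$. By definition there is an edge $e = xyz$ with $\phi(e)=T$, so every copy of $K^3_4$ containing $e$ lies in $T$. For each good vertex $u$ (with respect to $e$) the copy on $\{x,y,z,u\}$ is in $T$. That copy contains the edge $xyu$, and $xyu$ then also extends to a $K^3_4$ of $T$. Recall that all $K^3_4$ through a given edge lie in a single component, which is what makes $\phi$ well defined. So $\phi(xyu)=T$, that is, $u \in N_T(xy)$. Together with $z$ itself, this gives $\deg_T(xy) \ge (1/3+3\alpha)n$. Finally $|V(T)| \ge \deg_T(xy)$ trivially, since the neighbours lie in $V(T)$.

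There is no real obstacle here. The only care needed is the bookkeeping of the $\pm$ constants, so that the stated bound holds exactly and not merely up to $O(1)$. The extra slack from counting inside $V(H)\setminus e$ and using that the third vertex of $e$ is already a neighbour handles this. The other point to note is that the second part uses the well-definedness of $\phi$, which the paper has already established.
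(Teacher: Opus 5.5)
Your proof is correct and is essentially the paper's argument: a union bound over the three pairs of $e$ (the paper counts over all of $V(H)$, giving at least $n - 3(n-\delta_2^+(H)) = (1/3+3\alpha)n$ common extensions, which needs no constant bookkeeping because $u\in\{a,b\}$ is already counted as a non-neighbour of $ab$). The second part is then obtained exactly as you do it, by noting that each $K^3_4$ on $\{x,y,z,u\}$ with $\phi(xyz)=T$ also forces $\phi(xyu)=T$.
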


Our next proposition asserts that if $\T(H)$ has at least two tight components, then some pair of vertices of $H$ is in at least two of these tight components.

\begin{proposition} \label{prp: weird configurations exist}
    Let $H$ be a~$3$-graph on~$n$ vertices with~$\delta_2(H) > 7n/9$ for which $\T(H)$ has at least two tight components. 
    Then there exists $xy \in \binom{V(H)}2$ such that $|\phi(xy)| \ge 2$.
    Moreover, for all $xy \in \binom{V(H)}2$, all $T_1, T_2 \in \phi(xy)$ and all $z \in N_{T_1}(xy)$, there exists $w_z \in V(H) \setminus \{x, y, z\}$ with $w_zxy, w_zyz \in E(H)$ and $\phi(w_zxy) = T_2$. 
\end{proposition}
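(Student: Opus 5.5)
The first assertion should follow from the tight connectivity of $H$ itself, and the second from a counting argument combining Fact~\ref{fct: tetrahedral codegree} with the codegree of the pair $yz$.

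\emph{Preliminaries.} Since $\delta_2(H) > 7n/9 > 2n/3$, every edge of $H$ lies in a copy of $K^3_4$. Indeed, for an edge $abc$ each of the three pairs misses fewer than $2n/9$ vertices, so some vertex completes all three pairs. Hence the colouring $\phi$ is well defined. Moreover, every copy $K$ of $K^3_4$ lies in the tight component $\phi(e)$ for each edge $e \subseteq K$.

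\emph{First assertion.} I argue by contradiction. Suppose $|\phi(xy)| \le 1$ for every pair $xy$. Then any two edges $e, e'$ with $|e \cap e'| = 2$ receive the same colour, namely the unique element of $\phi(e\cap e')$.

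By the minimum codegree, $H$ is tightly connected (cf.~\cite[Lemma~2.1]{RRS3graph}). Along a tight walk in $H$, consecutive edges share a pair, so all edges of $H$ receive the same colour $T$. Every copy of $K^3_4$ in $H$ contains an edge, so every such copy lies in $T$. This means $\T(H)$ has only one tight component, contradicting the hypothesis.

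\emph{Second assertion.} Fix $xy$, components $T_1, T_2 \in \phi(xy)$ and $z \in N_{T_1}(xy)$. Let
\[ N_2 := \{w \in V(H)\setminus\{x,y\} : wxy \in E(H),\ \phi(wxy) = T_2\}. \]
I aim to show that $|N_2| > n/3$ and that $N_2 \cap N_H(yz) \neq \emptyset$.

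\emph{Lower bound on $|N_2|$.} Since $T_2 \in \phi(xy)$, there is an edge $w'xy$ with $\phi(w'xy) = T_2$. Each of the pairs $xy, xw', yw'$ has fewer than $n - 7n/9 = 2n/9$ non-neighbours. Hence more than $n - 3\cdot 2n/9 = n/3$ vertices $u$ complete $\{x,y,w'\}$ to a copy of $K^3_4$; this is the count behind Fact~\ref{fct: tetrahedral codegree}. For each such $u$, the set $\{x,y,w',u\}$ is a copy of $K^3_4$ in $T_2$ containing the edge $xyu$, so $\phi(xyu) = T_2$ and $u \in N_2$. Together with $w'$ itself, this gives $|N_2| > n/3$.

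\emph{Finding $w_z$.} We have $|N_H(yz)| > 7n/9$, so $|N_2| + |N_H(yz)| > n/3 + 7n/9 > n$. Both sets lie in $V(H)$, so they intersect. Any $w_z$ in the intersection satisfies the requirements:
\begin{itemize}
\item $w_z \notin \{x,y\}$ by the definition of $N_2$;
\item $w_z \ne z$ since $w_z \in N_H(yz)$;
\item $w_zxy \in E(H)$ with $\phi(w_zxy) = T_2$, and $w_zyz \in E(H)$.
\end{itemize}

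\emph{Main obstacle.} The only delicate point is the strictness of the inequalities. The hypothesis $\delta_2(H) > 7n/9$ is strict, so the number of non-neighbours of each pair is strictly below $2n/9$, and the count $|N_2| > n/3$ stays strict. This is enough for the pigeonhole step. I expect no further difficulty.
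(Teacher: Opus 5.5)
Your proof is correct. Your treatment of the second assertion matches the paper's: more than $n/3$ vertices $w$ satisfy $\phi(wxy)=T_2$ (the count behind Fact~\ref{fct: tetrahedral codegree}), and since $n/3+7n/9>n$ this set meets $N_H(yz)$.

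For the first assertion you take a different route.
\begin{itemize}
\item The paper argues locally. By Fact~\ref{fct: colours are intersecting family}, some vertex $x$ lies in two components $T_1,T_2$, witnessed by pairs $xv_1$ and $xv_2$. If neither pair already lies in two components, any $y\in N_H(xv_1)\cap N_H(xv_2)$ gives $\{T_1,T_2\}\subseteq\phi(xy)$.
\item You argue globally. If no pair were in two components, $\phi$ would be constant along every tight walk of $H$, so tight connectivity of $H$ leaves only one component.
\end{itemize}

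Your version makes it transparent why such a pair must exist: a pair in two components is exactly what lets the colour change along a tight walk. The cost is that you import tight connectivity of $H$ by citation rather than proving it. That input is elementary here and holds for every $n$. Since $\delta_2(H)>n/2$, any two pairs $ab,ad$ have a common neighbour $w$. The edges $abw$ and $adw$ share $aw$, so under your hypothesis the unique colours of $ab$, $aw$ and $ad$ coincide. Any two disjoint pairs $ab,cd$ are linked through $ac$, so all pairs carry one colour. Unpacked this way, your argument and the paper's rest on the same common-neighbour step.
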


\begin{proof}
By Fact~\ref{fct: colours are intersecting family}  there must exist $x \in V(H)$ with $|\phi(x)| \ge 2$, as otherwise by choosing vertices $u_1$ and $u_2$ with $\phi(u_1) \ne \phi(u_2)$ we would have $\phi(u_1) \cap \phi(u_2) = \emptyset$. Fix such an $x$ and distinct tight components $T_1, T_2 \in \phi(x)$; we may then choose $v_1, v_2 \in V(H)\setminus \{x\}$ with $T_1 \in \phi(xv_1)$ and $T_2 \in \phi(xv_2)$.
If $|\phi(xv_i)| \ge 2$ for some $i \in [2]$, then we may take $y = v_i$; otherwise, we may choose $y \in N(xv_1) \cap N(xv_2)$. 

For the second part of the statement, let $xy \in \binom{V(H)}2$ have $T_1, T_2 \in \phi(xy)$ and choose $z \in N_{T_1}(xy)$.
By Fact~\ref{fct: tetrahedral codegree} we have $\deg_{T_2}(xy) >n/3$, so we may choose $w_z \in N_H(yz) \cap N_{T_2}(xy)$. 
\end{proof}

Our next proposition shows that in any collection of nine pairs of vertices of $H$ there are some eight pairs which share a common neighbour. Similarly, given a nominated pair of vertices of $H$ and a further seven pairs, we can find seven pairs including the nominated pair which share a common neighbour. For notational simplicity, for  a set $\mathcal{P} \subseteq \binom{V(H)}{2}$ and $P \in \mathcal{P}$, we write $\mathcal{P}\setminus P$ to mean~$\mathcal{P}\setminus\{P\}$.

\begin{proposition} \label{Prp: 8 from 9 and 7 with one fixed}
Let~$H$ be a~$3$-graph on~$n$ vertices with~$\delta_2(H) > 7n/9$. 
\begin{enumerate}[label={\rm(\roman*)}]
\item  Let~$\mathcal{P} \subseteq \binom{V(H)}{2}$ have~$|\mathcal{P}| \le 9$. Then some $P' \in \mathcal{P}$ has $\bigcap_{P \in \mathcal{P}\setminus P'} N_H(P) \neq \emptyset$. \label{itm: i - 8 pairs common nbr}

\item Let~$\mathcal{P} \subseteq \binom{V(H)}{2}$ have~$|\mathcal{P}| \le 8$ and fix~$P \in \mathcal{P}$. Then some $P' \in \mathcal{P}\setminus P$ has $\bigcap_{P'' \in \mathcal{P}\setminus P'} N_H(P'') \neq \emptyset.$ \label{itm (ii): P and 6 other pairs} 
\end{enumerate}
\end{proposition}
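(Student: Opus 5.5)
The plan is a double-counting argument over the vertices of $H$. Each pair $P \in \binom{V(H)}{2}$ has at most $n - \delta_2(H) < 2n/9$ vertices of $H$ outside $N_H(P)$; these are $P$'s ``non-neighbours'', and they include the two vertices of $P$, which is harmless. For each vertex $v \in V(H)$, let $m(v)$ be the number of pairs $P \in \mathcal{P}$ with $v \notin N_H(P)$, that is, the number of pairs that $v$ ``misses''. Counting incidences gives
\[
\sum_{v \in V(H)} m(v) \;=\; \sum_{P \in \mathcal{P}} \bigl(n - \deg_H(P)\bigr) \;<\; |\mathcal{P}| \cdot \frac{2n}{9}.
\]

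For part \ref{itm: i - 8 pairs common nbr}, take $|\mathcal{P}| \le 9$. The display gives $\sum_v m(v) < 2n$, so averaging yields a vertex $v$ with $m(v) \le 1$. If $m(v) = 0$, then $v$ lies in every $N_H(P)$, and we may take $P'$ to be any element of $\mathcal{P}$; if $\mathcal{P}$ is empty the statement is vacuous. If $m(v) = 1$, let $P'$ be the unique pair that $v$ misses. Then $v \in \bigcap_{P \in \mathcal{P}\setminus P'} N_H(P)$, as required. One subtlety is that $v$ might be a vertex of some $P \in \mathcal{P}$, but then $v \notin N_H(P)$, so such a $P$ is counted in $m(v)$. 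Hence the bound $m(v) \le 1$ already accounts for this and nothing extra is needed.

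For part \ref{itm (ii): P and 6 other pairs}, take $|\mathcal{P}| \le 8$ with a nominated $P$. Restrict to $N_H(P)$, which has size more than $7n/9$. Each of the other at most $7$ pairs $Q$ misses fewer than $2n/9$ vertices, so summing $m'(v)$ (the number of pairs in $\mathcal{P}\setminus P$ that $v$ misses) over $v \in N_H(P)$ gives a total less than $7 \cdot 2n/9 = 14n/9$. Compare this with $|N_H(P)| > 7n/9$. The average of $m'$ over $N_H(P)$ is then less than $2$, so some $v \in N_H(P)$ has $m'(v) \le 1$. Take $P'$ to be the unique pair in $\mathcal{P}\setminus P$ missed by $v$, or any element of $\mathcal{P}\setminus P$ if $m'(v)=0$. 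Then $v$ is a common neighbour of all pairs in $\mathcal{P}\setminus P'$, and this set contains $P$.

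The main point to check is that the strict inequalities survive: we need an average strictly below $2$, which is what forces a vertex with $m \le 1$. This follows from $\delta_2(H) > 7n/9$ being strict. There is no real obstacle beyond that, apart from the degenerate case $\mathcal{P}\setminus P = \emptyset$ in part \ref{itm (ii): P and 6 other pairs}. That case is excluded because $P'$ must exist, so we interpret $|\mathcal{P}| \ge 2$ there.
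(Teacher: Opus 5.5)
Your proof is correct and is essentially the paper's averaging argument, written in complementary form: you count the pairs each vertex misses rather than the neighbourhoods it lies in. In part (ii) you average over all of $N_H(P)$ instead of a subset $W$ of size exactly $7n/9$; this sidesteps the integrality of $7n/9$ but otherwise changes nothing.
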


\begin{proof}
For \ref{itm: i - 8 pairs common nbr}, note that $\sum_{ P \in \mathcal{P}} \deg_H(P) > |\mathcal{P}| (7n/9) \geq (|\mathcal{P}|-2) n$. Since $H$ has $n$ vertices, by averaging some~$x \in V(H)$ must be contained in $N_H(P)$ for at least $|\mathcal{P}| - 1$ of the pairs $P \in \mathcal{P}$.

    For \ref{itm (ii): P and 6 other pairs}, fix $W \subseteq N_H(P)$ with $|W| = 7n/9$, and observe that 
 $$\sum_{P'' \in \mathcal{P} \setminus P} |N_H(P'') \cap W| > (|\mathcal{P}| - 1)(5n/9) \geq (|\mathcal{P}| - 3)7n/9 =  (|\mathcal{P}| - 3)|W|.$$
By averaging we deduce that some vertex~$x \in W$ is contained in~$N_H(P'') \cap W$ for at least $|\mathcal{P}| - 2$ of the pairs $P'' \in \mathcal{P} \setminus P$.
\end{proof}

Our proof of Lemma~\ref{lma: tight connectivity at 7n/9} proceeds through three key steps. First we show that every vertex of $H$ is in at most two tight components of $\T(H)$ in Section~\ref{subsec: every vtx in 2 tight comp}. We then show that $\T(H)$ has at most two tight components in Section~\ref{subsec: at most 2 tight comp}. Finally, we show that in fact $\T(H)$ has only one tight component in Section~\ref{subsec: one tight comp}. 

\subsection{Every vertex is in at most two tight components} \label{subsec: every vtx in 2 tight comp}

The aim of this section is to prove Lemma~\ref{lma: every vtx sees 2 colours}, which states that every vertex of $H$ is in at most two tight components of $\T(H)$.
We proceed by studying the properties of the edge-coloured link graph of a vertex in the following lemma, leading to the conclusion that each pair of distinct vertices is in at most two tight components.

\begin{lemma} \label{lma: link graph properties}
    Let~$H$ be a~$3$-graph on~$n$ vertices with~$\delta_2(H) > 7n/9$. Then for all~$v \in V(H)$ the following statements hold for the edge colouring $\phi_v$ of $L(v)$.
    \begin{enumerate}[label={\rm(\roman*)}]
    \item There is no properly coloured copy of~$C_4$ in~$L(v)$. \label{itm: i - no properly col C4}
    \item There is no copy of~$C_4$ in~$L(v)$ with~$3$ colours.\label{itm: ii - no C4 with three colours}
    \item There is no copy of~$P_4$ in~$L(v)$ with~$3$ colours. \label{itm: iii - no rainbow P4}
    \item For each $u \in V(H) \setminus \{v\}$ we have $|\phi(uv)|\le 2$. \label{itm: iv - every vtx sees 2 colours} 
    \end{enumerate}
\end{lemma}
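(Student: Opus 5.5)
The plan is to translate each statement about the coloured link graph $L(v)$ into a statement about pairs of edges of $H$ that share two vertices, and then to apply Fact~\ref{fct: common nbr of adjacent triangles} together with the averaging of Proposition~\ref{Prp: 8 from 9 and 7 with one fixed}. Two edges $ab, bc$ of $L(v)$ correspond to the edges $vab, vbc$ of $H$, which meet in $vb$. Their boundary pairs are $va, vb, vc, ab, bc$. So if some vertex $w \notin \{v,a,b,c\}$ is a common neighbour of these five pairs, then $\phi_v(ab) = \phi_v(bc)$. I will call such a pair of adjacent link edges a \emph{coupling}. The goal in each part is to exhibit a coupling whose five pairs have a common neighbour but whose two edges are forced to have different colours. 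Any common neighbour of $va$ and $vb$ automatically avoids $v, a, b$, so distinctness comes for free.

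For \ref{itm: i - no properly col C4}, let $abcd$ be a properly coloured $C_4$ in $L(v)$. The four couplings around the cycle involve the eight pairs $va, vb, vc, vd, ab, bc, cd, da$. By Proposition~\ref{Prp: 8 from 9 and 7 with one fixed}\ref{itm: i - 8 pairs common nbr}, some seven of these pairs have a common neighbour $w$. I then check that deleting any single pair leaves some coupling intact. If the deleted pair is $va$, then the coupling $(bc, cd)$ survives. If the deleted pair is $ab$, then the coupling $(cd, da)$ survives. The remaining cases are symmetric. The surviving coupling forces two consecutive colours to agree, contradicting properness.

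For \ref{itm: ii - no C4 with three colours}, four edges with three colours have exactly one repeated colour. If the repeat is on opposite edges the colouring is proper, which \ref{itm: i - no properly col C4} already excludes. So I may assume $\phi_v(ab) = \phi_v(bc)$, and that the three couplings $(bc,cd)$, $(cd,da)$, $(da,ab)$ are all bichromatic. All three of these couplings use $vd$, and every other pair lies in at most two of them. I therefore apply Proposition~\ref{Prp: 8 from 9 and 7 with one fixed}\ref{itm (ii): P and 6 other pairs} to the same eight pairs with $P = vd$ fixed. This yields a common neighbour of all pairs except one $P' \ne vd$, so at least one of the three couplings survives, giving a contradiction.

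For \ref{itm: iii - no rainbow P4}, take a rainbow path $abcd$ with colours $1, 2, 3$. Here there are only seven relevant pairs, so I will add an auxiliary vertex $u$ chosen as a common neighbour of $va, vb, vc, vd$. This is possible since $4 \cdot 7n/9 > 3n$. The vertex $u$ creates the $C_4$'s $abcu$ and $bcdu$ in $L(v)$. Applying \ref{itm: i - no properly col C4} and \ref{itm: ii - no C4 with three colours} to these gives $\phi_v(cu) = 2$, $\phi_v(ua) \in \{1,2\}$ and $\phi_v(ud) \in \{2,3\}$. I would then choose $u$ more carefully, or run the eight-pair argument once more on pairs such as $\{va, vd, ad, vu, \dots\}$, to force a coupling between two differently coloured edges.

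For \ref{itm: iv - every vtx sees 2 colours}, suppose $|\phi(uv)| \ge 3$, witnessed by edges $uva, uvb, uvc$ of three colours. In $L(v)$ this is a rainbow star at $u$. I would take $w$ to be a common neighbour of $va, vb, vc$ together with a few further pairs, and use \ref{itm: iii - no rainbow P4} on paths of the form $w\,a\,u\,b$ to pin down the colours of the edges at $w$. Then, as before, I would find a coupling that yields a contradiction.

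I expect the main obstacle to be \ref{itm: iii - no rainbow P4} and \ref{itm: iv - every vtx sees 2 colours}. The naive auxiliary-vertex constructions give constraints on the colours around $u$ or $w$ that are consistent, so the auxiliary vertex must be chosen so that it is simultaneously a common neighbour of enough pairs to activate a further coupling. The codegree $7n/9$ allows roughly eight pairs to share a common neighbour, and the difficulty is arranging the choice so that the eight pairs used actually close off every colouring.
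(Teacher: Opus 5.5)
Your arguments for (i) and (ii) are correct. Part (ii) is exactly the paper's argument: the paper handles (i) and (ii) together by labelling the cycle so that three consecutive couplings are bichromatic and fixing the $v$-pair at the middle vertex. For (i), your use of Proposition~\ref{Prp: 8 from 9 and 7 with one fixed}\ref{itm: i - 8 pairs common nbr} also works, since each of the eight pairs lies in at most three of the four couplings.

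The gap is that (iii) and (iv) are not proved, and your plan for (iii) cannot work as stated. A vertex $u$ chosen only as a common neighbour of $va,vb,vc,vd$ gives no control over the colours at $u$. Your deduction $\phi_v(cu)=2$ is unjustified: $cu$ lies only in the cycle $abcu$, which gives merely $\phi_v(cu)\in\{1,2\}$. Moreover, the constraints from $abcu$ and $bcdu$ are jointly satisfiable, e.g.\ by colouring all of $ua,ub,uc,ud$ with colour $2$.

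The missing idea is to use Fact~\ref{fct: tetrahedral codegree} to prescribe the colour of one edge at the auxiliary vertex. Since $\phi(vcd)\in\phi(vc)$, we have $\deg_{\phi(vcd)}(vc)>n/3$. As $n/3+7n/9>n$, there is $u\in N_{\phi(vcd)}(vc)\cap N_H(va)$. Then $u\notin\{a,b,c\}$; in particular $u\neq b$ because $\phi(vbc)\neq\phi(vcd)$. So $abcu$ is a $C_4$ in $L(v)$ whose edges $ab,bc,cu$ receive three distinct colours. It therefore has three or four colours, contradicting (ii) or (i).

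For (iv) you need neither an auxiliary vertex nor (iii). The three edges $uva,uvb,uvc$ of $H$ pairwise share the pair $uv$. In your language, the edges $ua,ub,uc$ of $L(v)$ give three couplings at $u$, with pair sets $\partial uva\cup\partial uvb$, $\partial uvb\cup\partial uvc$ and $\partial uva\cup\partial uvc$. All three contain $uv$, while each of the six pairs $ua,va,ub,vb,uc,vc$ lies in only two of them.

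Apply Proposition~\ref{Prp: 8 from 9 and 7 with one fixed}\ref{itm (ii): P and 6 other pairs} to the seven pairs $\{uv,ua,ub,uc,va,vb,vc\}$ with $uv$ fixed. This yields a common neighbour of all but one pair $P'\neq uv$, so one coupling survives. Fact~\ref{fct: common nbr of adjacent triangles} then forces two of the three distinct colours to coincide, a contradiction. This is the same mechanism you already used in (ii), and it is how the paper argues.
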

\begin{proof}

For~\ref{itm: i - no properly col C4} and~\ref{itm: ii - no C4 with three colours}, observe that if a copy of $C_4$ in $L(v)$ is properly coloured or has three colours, then we may write its vertices as $u_0u_1u_2u_3$ in such a way that $\phi_v(u_{i-1}u_i) \neq \phi_v(u_iu_{i+1})$ for each $i \in [3]$. 
Let~$\mathcal{P} = \{vu_{i} \colon i \in [4]\} \cup \{u_iu_{i+1} \colon i \in [4]\}$, with addition in the index performed modulo~4, so $|\mathcal{P}| = 8$. By Proposition~\ref{Prp: 8 from 9 and 7 with one fixed}\ref{itm (ii): P and 6 other pairs} with~$vu_2$ playing the role of~$P$, we deduce that there exists a pair $P' \in \mathcal{P}\setminus \{vu_2\}$ and a vertex~$x$ such that $x \in \bigcap_{P \in \mathcal{P} \setminus P'} N_H(P)$. It follows that for some $i \in [3]$ we have $$x \in \bigcap_{P \in \partial vu_{i-1} u_{i} \cup \partial vu_{i}u_{i+1}} N_H(P),$$ so $\phi_v(u_{i-1} u_{i}) = \phi_v(u_{i}u_{i+1})$ by Fact~\ref{fct: common nbr of adjacent triangles}, giving a contradiction.

For \ref{itm: iii - no rainbow P4}, suppose for a contradiction that~$wxyz$ is a path on~$4$ vertices in~$L(v)$ with $3$ colours. 
By Fact~\ref{fct: tetrahedral codegree} we have $\deg_{ \phi(vyz)}(vy) > n/3$. So we may choose a vertex $u \in N_{ \phi(vyz)}(vy) \cap N_{H} (vw)$, and then~$uwxy$ forms a copy of~$C_4$ in~$L(v)$ with at least~$3$ colours, contradicting~\ref{itm: ii - no C4 with three colours}. 

For \ref{itm: iv - every vtx sees 2 colours}, suppose for a contradiction that some $v \in V(H) \setminus \{v\}$ has $|\phi(uv)| \geq 3$. Choose distinct tight components $T_1, T_2, T_3 \in \phi(uv)$ and vertices $v_1, v_2$ and $v_3$ with~$uvv_i\in T_i$ for each~$i \in [3]$. Let~$\mathcal{P} = \{uv, uv_i, vv_i \colon i \in [3]\}$, so $|\mathcal{P}| = 7$. By Proposition~\ref{Prp: 8 from 9 and 7 with one fixed}\ref{itm (ii): P and 6 other pairs} with~$uv$ playing the role of~$P$, we obtain $P' \in \mathcal{P} \setminus \{uv\}$ for which $\bigcap_{P \in \mathcal{P} \setminus P'} N_H(P) \neq \emptyset$. It follows that there exist distinct $i, j \in [3]$ such that $\bigcap_{P \in \partial uvv_i \cup \partial uvv_j} N_H(P) \neq \emptyset$. By Fact~\ref{fct: common nbr of adjacent triangles} we then have $T_i = \phi(uvv_i) = \phi(uvv_j) = T_j$, a contradiction. 
\end{proof}

\begin{lemma} \label{lma: every vtx sees 2 colours}
    Let~$H$ be a~$3$-graph on~$n$ vertices with~$\delta_2(H) > 7n/9$. Then every vertex is in at most two tight components of~$\T(H)$, that is, $|\phi(v)|\le 2$ for all $v \in V(H)$. 
\end{lemma}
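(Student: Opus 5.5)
Suppose for a contradiction that some vertex $v$ has $|\phi(v)| \ge 3$, and fix distinct $T_1, T_2, T_3 \in \phi(v)$. The plan is to work entirely inside the edge-coloured link graph $L(v)$ with colouring $\phi_v$. The aim is to find either a copy of $P_4$ in $L(v)$ with three colours, contradicting Lemma~\ref{lma: link graph properties}\ref{itm: iii - no rainbow P4}, or a pair $uv$ with $|\phi(uv)|\ge 3$, contradicting Lemma~\ref{lma: link graph properties}\ref{itm: iv - every vtx sees 2 colours}.

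For each $i\in[3]$ choose an edge $a_ib_i$ of $L(v)$ with $\phi_v(a_ib_i)=T_i$. Note that $L(v)$ is a graph on $n-1$ vertices, and each vertex $u\neq v$ has $\deg_{L(v)}(u)=\deg_H(uv)>7n/9$. So any two vertices of $L(v)$ have more than $5n/9$ common neighbours in $L(v)$, and in particular $L(v)$ has diameter at most $2$.

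The key step is to connect the $T_1$-edge and the $T_2$-edge. Pick a common neighbour $c$ of $b_1$ and $a_2$ in $L(v)$, so that $a_1b_1\,c\,a_2b_2$ is a walk in $L(v)$. Consider the colour $\phi_v(b_1c)$.
\begin{itemize}
\item If it is not $T_1$, then the path $a_1b_1c$ uses two colours, and extending from $c$ by any edge of a third colour gives a three-coloured $P_4$.
\item Cleaner: at a vertex $u$, the colours at $u$ in $L(v)$ form the set $\phi(uv)$, which has size at most $2$ by Lemma~\ref{lma: link graph properties}\ref{itm: iv - every vtx sees 2 colours}.
\end{itemize}
Suppose some edge $xy$ of $L(v)$ has $\phi_v(xy)=T_1$ and $y$ is incident with an edge $yz$ of colour $T_2$, with $z\neq x$. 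If additionally $z$ is incident with an edge $zw$ of colour $T_3$ with $w\notin\{x,y\}$, or $x$ is incident with such an edge, we obtain a three-coloured $P_4$ and are done. So we want a vertex $y$ seeing two colours, say $T_1,T_2$, with a $T_3$-edge within distance one.

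To find this configuration, I would use connectivity of $L(v)$. Since $L(v)$ is connected and all three colours appear, some vertex $y$ sees two distinct colours, say $T_1$ via $yx$ and $T_2$ via $yz$. Take a $T_3$-edge $ab$. By the large common neighbourhoods, choose $c$ adjacent in $L(v)$ to both $z$ and $a$, avoiding the at most $O(1)$ vertices already used (possible since there are more than $5n/9$ choices).

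Now examine the walk $x\,y\,z\,c\,a\,b$ and the colours along it. If $\phi_v(zc)$ is $T_3$, then $xyzc$ is a rainbow $P_4$. If $\phi_v(zc)=T_1$, then $\phi_v(yz)=T_2$ together with its neighbours still gives a path $x'yzc$ that can be analysed. The general principle is that any path in $L(v)$ whose edge colours include all three of $T_1,T_2,T_3$ contains a subpath of three edges with three colours, or a vertex seeing three colours. This is proved by taking a shortest subpath containing all three colours: its two end edges are the unique occurrences of their colours, and the interior has constant colour. If the interior has length at least two, the middle is monochromatic, which is the case that needs care.

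\textbf{Main obstacle.} The hard case is a long monochromatic middle, of the form $T_1, T_2, T_2, \dots, T_2, T_3$. Here I would shortcut using the degree condition. The first and last internal vertices $p,q$ have a common neighbour $r$ in $L(v)$, chosen outside the path. This replaces the long middle by $p\,r\,q$, and $\phi_v(pr)$ and $\phi_v(rq)$ again yield either a rainbow $P_4$ or a strictly shorter configuration. Iterating this reduces to an interior of length at most one, which gives a three-coloured $P_4$ or a pair with $|\phi(uv)|\ge 3$, and either is a contradiction. Making this shortening terminate rigorously, for instance by induction on the interior length, is the step I expect to require the most care.
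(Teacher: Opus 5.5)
Your proposal has a genuine gap, and it is in exactly the step you flagged: the shortening procedure does not terminate. Take a path $p_0p_1\dots p_k$ in $L(v)$ with colours $T_1,T_2,\dots,T_2,T_3$. If the common neighbour $r$ of $p_1$ and $p_{k-1}$ has $\phi_v(p_1r)=\phi_v(rp_{k-1})=T_2$, you get $p_0p_1rp_{k-1}p_k$ with colours $T_1,T_2,T_2,T_3$ again. So the interior can never be pushed below length two, and the ``general principle'' you state fails for exactly this pattern.

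More care will not fix this. Your argument uses only that $L(v)$ has minimum degree above $7n/9$ (hence large common neighbourhoods), together with the qualitative conclusions of Lemma~\ref{lma: link graph properties}. These facts are consistent with three colours at $v$. Take $L(v)$ to be complete minus the four pairs between $\{a,b\}$ and $\{c,d\}$. Colour $ab$ with $T_1$, $cd$ with $T_3$, and every other edge with $T_2$. Then every vertex sees at most two colours, there is no three-coloured $P_4$ or $C_4$ and no properly coloured $C_4$, and the minimum degree is $n-4$. Yet all three colours appear, and your shortcut stays stuck at $b\,a\,x\,c\,d$.

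The missing ingredient is quantitative. By Fact~\ref{fct: tetrahedral codegree}, for each $T_i\in\phi(v)$ the set $N_{\partial T_i}(v)$ of vertices of $L(v)$ that see $T_i$ has more than $n/3$ elements. Moreover, every $u$ with $T_i\in\phi(uv)$ has $\deg_{T_i}(uv)>n/3$. With this the paper's proof is immediate:
\begin{enumerate}[label={\rm(\alph*)}]
\item Take $u$ seeing two colours, say $T_1$ and $T_2$. Your connectivity remark gives one, as does pigeonhole on three sets of size greater than $n/3$.
\item Since $\deg_H(uv)>7n/9$ and $|N_{\partial T_3}(v)|>n/3$, the vertex $u$ has a neighbour $w\in N_{\partial T_3}(v)$ in $L(v)$.
\item By Lemma~\ref{lma: link graph properties}\ref{itm: iv - every vtx sees 2 colours}, $\phi_v(uw)\in\phi(uv)=\{T_1,T_2\}$; say $\phi_v(uw)=T_1$.
\item Choose $u'\in N_{T_2}(uv)$ and $w'\in N_{T_3}(vw)\setminus\{u'\}$. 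This is possible because there are more than $n/3$ choices for $w'$.
\item Then $u'uww'$ is a $P_4$ in $L(v)$ with three colours, contradicting Lemma~\ref{lma: link graph properties}\ref{itm: iii - no rainbow P4}.
\end{enumerate}
No path-shortening is needed once you know each colour class at $v$ is large.
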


\begin{proof}
    Suppose for a contradiction that a vertex~$v \in V(H)$ is in three tight components~$T_1, T_2, T_3$ of~$\T(H)$. For each~$i \in [3]$ we have $\deg_{\partial T_i}(v) > n/3$ by Fact~\ref{fct: tetrahedral codegree}, so we must have
    $N_{\partial T_i}(v) \cap N_{\partial T_j}(v) \neq \emptyset$ for some distinct~$i, j \in [3]$. Assume without loss of generality that $N_{\partial T_1}(v) \cap N_{\partial T_2}(v) \neq \emptyset$ and fix~$u \in N_{\partial T_1}(v) \cap N_{\partial T_2}(v)$ and $w \in N_{H}(uv) \cap N_{\partial T_3}(v)$. 
    
    By Lemma~\ref{lma: link graph properties}\ref{itm: iv - every vtx sees 2 colours} we have~$\phi_v(uw) = \phi(uvw) \in \phi(uv) = \{T_1, T_2\}$; assume without loss of generality that~$\phi_v(uw) = T_1$. 
    Let~$u' \in N_{T_2}(uv)$ and $w' \in N_{T_3}(vw)\setminus \{u'\}$. Then~$u'uww'$ is a copy of~$P_4$ in~$L(v)$ with~$\phi_v(u'u) = T_2$, $\phi_v(uw) = T_1$ and~$\phi_v(ww') = T_3$, a contradiction to Lemma~\ref{lma: link graph properties}\ref{itm: iii - no rainbow P4}.  
\end{proof}

\subsection{There are at most two tight components}\label{subsec: at most 2 tight comp}

The aim of this section is to prove Lemma~\ref{Lma: two tight components}, which states that $\T(H)$ has at most two tight components. For this we will use the following proposition asserting that $H$ cannot contain 3 vertices which each have a different set of two out of three colours. 

\begin{proposition} \label{prop: no alternating triangle}
    Let~$H$ be a~$3$-graph on~$n$ vertices with~$\delta_2(H) >7n/9$ and~$\mathscr{T}^* = \{T_1, T_2, T_3\}$ be a set of three distinct tight components of~$\T(H)$. Then there do not exist three vertices~$v_1, v_2, v_3$ of $H$ such that~$\phi(v_i) = \mathscr{T}^*\setminus \{T_i\}$ for each $i \in [3]$. 
\end{proposition}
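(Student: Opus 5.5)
The plan is to look at the colours of the three pairs $v_iv_j$ and then find a single vertex whose link sees all three colours, which contradicts Lemma~\ref{lma: every vtx sees 2 colours}.

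Suppose for a contradiction that such $v_1,v_2,v_3$ exist. They are distinct, since the sets $\phi(v_i)$ are pairwise different.

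First I pin down the colours of the pairs. Recall that $\phi(uv)\subseteq\phi(u)\cap\phi(v)$ for all $u,v$. Also, $\phi(uv)\neq\emptyset$ for every pair, because $\delta_2(H)>0$ and every edge lies in a copy of $K^3_4$: indeed $\delta_2(H)>7n/9>2n/3$, so Fact~\ref{fct: tetrahedral codegree} applies. For distinct $i,j\in[3]$, with $k$ the third index, we have
\[
\phi(v_i)\cap\phi(v_j) = \bigl(\mathscr{T}^*\setminus\{T_i\}\bigr)\cap\bigl(\mathscr{T}^*\setminus\{T_j\}\bigr)=\{T_k\}.
\]
It follows that $\phi(v_iv_j)=\{T_k\}$. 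So every edge of $H$ containing $v_iv_j$ has colour $T_k$.

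Next I find a common neighbour of the three pairs. Since $\delta_2(H)>7n/9$ and $3\cdot 7n/9>2n$, averaging gives a vertex
\[
w\in N_H(v_1v_2)\cap N_H(v_1v_3)\cap N_H(v_2v_3),
\]
and such a $w$ is automatically distinct from $v_1,v_2,v_3$. By the previous step the three edges containing $w$ have colours
\[
\phi(wv_1v_2)=T_3,\qquad \phi(wv_1v_3)=T_2,\qquad \phi(wv_2v_3)=T_1.
\]
Hence $\{T_1,T_2,T_3\}\subseteq\phi(w)$, so $|\phi(w)|\ge 3$. This contradicts Lemma~\ref{lma: every vtx sees 2 colours}.

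I do not expect a real obstacle here. The only points to check are that each pair $v_iv_j$ carries at least one colour, which uses $\delta_2(H)>0$ and that every edge extends to a $K^3_4$, and that the common neighbour $w$ exists.
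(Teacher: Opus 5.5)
Your proof is correct and is essentially the paper's argument: take a common neighbour $w$ of the three pairs $v_iv_j$ (which exists since $3\delta_2(H)-2n>0$), note that $\phi(wv_iv_j)\in\phi(v_i)\cap\phi(v_j)=\{T_k\}$, and conclude $|\phi(w)|\ge 3$, contradicting Lemma~\ref{lma: every vtx sees 2 colours}. Your preliminary step showing $\phi(v_iv_j)\neq\emptyset$ is harmless but unnecessary, since the edges $wv_iv_j$ themselves already witness a colour on each pair.
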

\begin{proof}
    Suppose for a contradiction that there exist~$v_1, v_2, v_3 \in V(H)$ such that~$\phi(v_i) = \mathscr{T}^*\setminus \{T_i\}$ for each $i \in [3]$. Choose $u \in N_H(v_1v_2) \cap N_H(v_2v_3) \cap N_H(v_1v_3)$, which exists since $$|N_H(v_1v_2) \cap N_H(v_2v_3) \cap N_H(v_1v_3)| \ge 3\delta_2(H) - 2n > 0.$$ For all distinct~$i,j \in [3]$ we have $\phi(uv_iv_j) \in \phi(v_iv_j) \subseteq \phi(v_i) \cap \phi(v_j) = \mathscr{T}^*\setminus \{T_i, T_j\}$, so $\mathscr{T}^* \subseteq \phi(u)$, contradicting Lemma~\ref{lma: every vtx sees 2 colours}.
\end{proof}

\begin{lemma} \label{Lma: two tight components}
    Let~$H$ be a~$3$-graph on~$n$ vertices with~$\delta_2(H) > 7n/9$. Then~$\T(H)$ has at most two tight components. 
\end{lemma}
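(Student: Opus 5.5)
Suppose for a contradiction that $\T(H)$ has at least three tight components. By Lemma~\ref{lma: every vtx sees 2 colours}, every vertex $v$ has $|\phi(v)| \in \{1,2\}$. By Fact~\ref{fct: colours are intersecting family}, the sets $\phi(v)$ form a pairwise intersecting family. Every tight component $T$ lies in $\phi(v)$ for some $v$, since each $T$ contains an edge. So the family $\{\phi(v) : v \in V(H)\}$ is an intersecting family of sets of size at most two whose union contains at least three components.

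First I would dispose of the case where some vertex $u$ has $|\phi(u)|=1$, say $\phi(u)=\{T_1\}$. Then Fact~\ref{fct: colours are intersecting family} gives $T_1 \in \phi(v)$ for every $v$. Now take a component $T_2 \ne T_1$ and an edge $e$ with $\phi(e)=T_2$, and pick a vertex $a$ of $e$. We have $\phi(ua) \subseteq \phi(u)=\{T_1\}$, so the pair $ua$ lies only in $T_1$. I would then use the codegree condition to reach a contradiction. The plan is to take a pair $ab \subseteq e$ and a vertex $w \in N_H(ua)\cap N_H(ub)\cap N_H(ab)$, so that the edges $uaw$ and $ubw$ are coloured $T_1$. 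Iterating this and applying Fact~\ref{fct: common nbr of adjacent triangles} or Lemma~\ref{lma: link graph properties} in the link of $a$ should force a $T_1$-edge adjacent to $e$ to share a common neighbour with $e$, which is impossible since $T_1 \neq T_2$.

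A cleaner route, which I would try first, is to use Lemma~\ref{lma: link graph properties}\ref{itm: iii - no rainbow P4} and the structure of $L(a)$. Here $|\phi(a)| \le 2$, so $\phi(a)=\{T_1,T_2\}$. Some third component $T_3$ lies in $\phi(v)$ for some vertex $v$, and then $\phi(v)=\{T_1,T_3\}$.

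Next I consider the case where every vertex has exactly two colours. An intersecting family of $2$-sets is either a star (all sets contain a common $T_1$) or a triangle $\{T_1T_2, T_1T_3, T_2T_3\}$. The triangle case with all three pairs present is excluded directly by Proposition~\ref{prop: no alternating triangle}. So in all remaining cases there is a common component $T_1 \in \phi(v)$ for all $v$, and at least two further components $T_2, T_3$ occur. Let $V_i=\{v : T_i \in \phi(v)\}$ for $i \ge 2$. These sets are disjoint (apart from the one-colour vertices, which lie in none of them). Each $V_i$ has size at least $(1/3+3\alpha)n$-ish by Fact~\ref{fct: tetrahedral codegree}, since $V(T_i) \subseteq V_i$.

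The key contradiction comes from edges between the classes. Take $x \in V_2$ and $y \in V_3$. Then $\phi(xy) \subseteq \{T_1\}$, so every edge meeting both $V_2$ and $V_3$ has colour $T_1$. Now take an edge $xx'z$ of colour $T_2$ with $x,x' \in V_2$ (so $z \in V_2$ as well). Choose $y \in V_3$ adjacent in $H$ to all three pairs of that edge; this is possible because $|V_3|>n/3$ and the three pairs have a common neighbourhood of size greater than $n/3$. This is delicate, since the intersection must be taken inside $V_3$, and I would instead choose $y$ from $V(T_3)$ using $3\delta_2-2n > n/3 > n-|V(T_3)|$. Then $xx'zy$ is a copy of $K^3_4$ containing the edge $xx'z$, so it lies in $T_2$. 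But it also contains the edge $xx'y$, which meets $V_3$, so that edge has colour $T_1 \ne T_2$. This is a contradiction.

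The main obstacle I expect is verifying that the counting works, in particular that $|V(T_3)| > 2n/3 - \dots$ is not needed. If the bound $3\delta_2 - 2n > 1/3 \cdot n$ is insufficient to land inside $V_3$, I would fall back on Fact~\ref{fct: tetrahedral codegree} applied to a pair, together with Proposition~\ref{Prp: 8 from 9 and 7 with one fixed}.
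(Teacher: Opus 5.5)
Your reduction to a ``star'' is sound. Every component occurs in some $\phi(v)$, each $|\phi(v)|\le 2$, the sets $\phi(v)$ pairwise intersect, and Proposition~\ref{prop: no alternating triangle} excludes the triangle. So some $T_1$ lies in every $\phi(v)$, and $V_2, V_3$ are disjoint, each of size greater than $n/3$, with every edge meeting both of them coloured $T_1$. Your separate discussion of a one-coloured vertex is then unnecessary. The proof breaks at the last step. The inequality $n/3 > n-|V(T_3)|$ would need $|V(T_3)|>2n/3$, which is false here: $V(T_3)\subseteq V_3$ is disjoint from $V_2\supseteq V(T_2)$, and $|V(T_2)|>n/3$. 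Worse, the vertex $y$ you want can never exist. Any common neighbour of the three pairs of a $T_2$-coloured edge $xx'z$ completes it to a copy of $K^3_4$, which lies in $T_2$, so that neighbour belongs to $V(T_2)\subseteq V_2$. Your contradiction therefore just asserts a vertex that the structure itself forbids. The picture of two disjoint classes of size $>n/3$, with all common neighbours of pairs of $T_2$-edges inside $V_2$, is consistent with every size bound available at $7n/9$. No counting of this kind can finish, and the closing appeal to Proposition~\ref{Prp: 8 from 9 and 7 with one fixed} is not an argument.

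What is missing is the structural configuration at the heart of the paper's proof.
\begin{itemize}
\item Take a pair $xy$ of a $T_2$-edge and $z\in N_H(xy)\cap V_3$, so $\phi(xyz)\in\phi(xz)\subseteq\{T_1\}$.
\item Take $w\in N_{\partial T_3}(z)\cap N_H(yz)$, which exists by Fact~\ref{fct: tetrahedral codegree}, so $\phi(wyz)=T_1$.
\item Take $u,v$ with $\phi(uwz)=T_3$ and $\phi(vxy)=T_2$.
\end{itemize}
The edges $vxy, xyz, wyz, uwz$ span nine pairs. Proposition~\ref{Prp: 8 from 9 and 7 with one fixed}\ref{itm: i - 8 pairs common nbr} gives a vertex $p$ in the common neighbourhood of all but one of them. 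Fact~\ref{fct: common nbr of adjacent triangles}, applied to $(xyz,vxy)$ and to $(wyz,uwz)$, forces the omitted pair to be $yz$. Then $pvxy$ and $puwz$ are copies of $K^3_4$, so $p\in V(T_2)\cap V(T_3)\subseteq V_2\cap V_3$, a contradiction. The paper runs exactly this configuration, starting from Proposition~\ref{prp: weird configurations exist}, and concludes via Proposition~\ref{prop: no alternating triangle} applied to $x,p,z$. With your star reduction in place, the disjointness of $V_2$ and $V_3$ finishes it directly.
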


\begin{proof}
    Suppose for a contradiction that $\T(H)$ has at least $3$ distinct tight components. 
    By Proposition~\ref{prp: weird configurations exist} and Lemma~\ref{lma: link graph properties}\ref{itm: iv - every vtx sees 2 colours}, there exists $xy \in \binom{V(H)}2$ such that $|\phi(x y)| = 2$. Let $\phi(x y) = \{T_1, T_2\}$, so $\phi(x) = \phi(y) = \{T_1, T_2\}$ by Lemma~\ref{lma: every vtx sees 2 colours}. Let $T_3$ be a tight component in~$\T(H)$ distinct from~$T_1$ or~$T_2$. By Fact~\ref{fct: tetrahedral codegree} we have $|V(T_3)| > n/3$, so we may choose $z \in N_H(xy) \cap V(T_3)$. Since $\phi(xyz) \in \phi(xy) = \{T_1, T_2\}$, we assume without loss of generality that $\phi(xyz)= T_1$, so $\phi(z) = \{T_1, T_3\}$ by Lemma~\ref{lma: every vtx sees 2 colours}. Also by Fact~\ref{fct: tetrahedral codegree} we have $\deg_{\partial T_3}(z) > n/3$, so we may choose $w \in N_{\partial T_3}(z) \cap N_H(yz)$, and then we have $\phi(wyz) \in \phi(yz) = \phi(y) \cap \phi(z) = \{ T_1\}$. Since $T_3 \in \phi(wz)$ and $T_2 \in \phi(x y)$, we may choose $u, v \in V(H) \setminus \{x,y,z,w\}$ for which $\phi(uwz) = T_3$ and $\phi(vxy) = T_2$. This gives the structure shown in Figure~\ref{fig: two tight comp}.
    \begin{figure}[!t]
        \centering
        \includegraphics[width=0.45\linewidth]{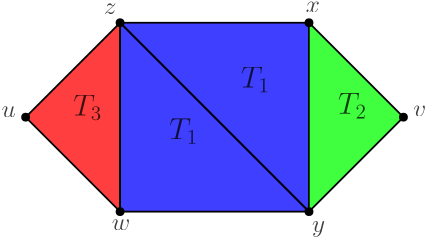}
        \caption{Edges and components in Lemma~\ref{Lma: two tight components}}
        \label{fig: two tight comp}
    \end{figure}

    Let $\mathcal{P} = \bigcup_{e \in \{vxy, xyz, wyz, uwz\}} \partial e$, so $| \mathcal{P} | = 9$. Since $\phi(xyz) \neq \phi(vxy)$ and $\phi(uwz) \neq \phi(wyz)$, by Fact~\ref{fct: common nbr of adjacent triangles} we have 
    $$\bigcap_{P \in \partial xyz \cup \partial vxy} N_H (P) = \emptyset \mbox{ and } \bigcap_{P \in \partial wyz \cup \partial uwz} N_H (P) = \emptyset.$$ 
    On the other hand, by Proposition~\ref{Prp: 8 from 9 and 7 with one fixed}\ref{itm: i - 8 pairs common nbr} there exists $P' \in \mathcal{P}$ with $\bigcap_{P \in \mathcal{P} \setminus P'} N_H(P) \neq \emptyset$. So we must have $P' = yz$, and therefore may choose $p \in \bigcap_{P \in \mathcal{P} \setminus \{yz\}} N_H(P)$. Since $puwz$ and $pvxy$ are edges of $\T(H)$ we have $\phi(puw) = \phi(uwz) = T_3$ and $\phi(pvx) = \phi(vxy) = T_2$, so~$\phi(p) = \{T_2, T_3\}$ by Lemma~\ref{lma: every vtx sees 2 colours}. So~$\phi(x) = \{T_1, T_2\},\ \phi(p) = \{T_2, T_3\}$ and~$\phi(z) = \{T_1, T_3\}$, contradicting Proposition~\ref{prop: no alternating triangle}.  
\end{proof}

\subsection{There is only one tight component} \label{subsec: one tight comp}
In this section we complete the proof of Lemma~\ref{lma: tight connectivity at 7n/9}. By Lemma~\ref{Lma: two tight components} we know that $H$ has only two tight components, which we call red and blue (denoted $r$ and $b$), so each edge of $H$ is coloured either red or blue. We now give several results describing certain colour patterns that cannot occur in~$H$. For edge-coloured $3$-graphs $G$ and $H$, when we write `$G$ is not a subgraph of $H$', we mean that $G$ is not a colour-preserving subgraph of $H$. 

\begin{proposition}\label{prp: no colour balanced double tetrahedron}
    Let~$H$ be a~$3$-graph on~$n$ vertices with~$\delta_2(H) > 7n/9$, and let $r, b$ be distinct tight components of~$\T(H)$. Let~$G$ be a~$3$-graph with vertices~$x, y_1, y_2, y_3, z$ and edge set $\{xy_i y_{i+1}, zy_i y_{i+1} \colon i \in [3]\}$, in which $3$ edges have colour $r$ and $3$ edges have colour $b$. Then~$G$ is not a subgraph of~$H$.
\end{proposition}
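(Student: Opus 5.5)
The plan is to use Proposition~\ref{Prp: 8 from 9 and 7 with one fixed}\ref{itm: i - 8 pairs common nbr} together with Fact~\ref{fct: common nbr of adjacent triangles}, the same way they were used in the proof of Lemma~\ref{Lma: two tight components}. Suppose for a contradiction that $G$ is a colour-preserving subgraph of $H$. Let
\[
\mathcal{P} = \bigcup_{e \in E(G)} \partial e = \{xy_i,\, zy_i,\, y_iy_{i+1} \colon i \in [3]\},
\]
with indices taken modulo~$3$, so $|\mathcal{P}| = 9$. Proposition~\ref{Prp: 8 from 9 and 7 with one fixed}\ref{itm: i - 8 pairs common nbr} then gives a pair $P' \in \mathcal{P}$ and a vertex $p \in \bigcap_{P \in \mathcal{P}\setminus P'} N_H(P)$. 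Since $p \notin P$ for every $P \in \mathcal{P}\setminus P'$, and every vertex of $G$ lies in at least three pairs of $\mathcal{P}$, the vertex $p$ lies outside $V(G)$.

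Call an edge of $G$ \emph{damaged} if $P' \in \partial e$. If two undamaged edges $e,e'$ of $G$ satisfy $|e \cap e'| = 2$, then $p$ is a common neighbour of every pair in $\partial e \cup \partial e'$. By Fact~\ref{fct: common nbr of adjacent triangles}, $e$ and $e'$ then lie in the same tight component, so they have the same colour. Consequently, all undamaged edges in one component of the \emph{adjacency graph} (the graph on $E(G)$ where two edges are joined if they share two vertices) have the same colour.

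The rest is a short case analysis on $P'$, using the symmetry of $G$ that swaps $x$ with $z$ and cyclically permutes the $y_i$. In every case exactly two edges of $G$ are damaged.
\begin{itemize}
\item If $P' = xy_1$ (the case $P' = zy_i$ is symmetric), the damaged edges are $xy_1y_2$ and $xy_3y_1$. The undamaged edges $zy_1y_2$, $zy_2y_3$, $zy_3y_1$ are pairwise adjacent, and $xy_2y_3$ is adjacent to $zy_2y_3$.
\item If $P' = y_1y_2$ (the other cases are symmetric), the damaged edges are $xy_1y_2$ and $zy_1y_2$. The undamaged edges satisfy: $xy_2y_3$ is adjacent to $xy_3y_1$ (they share $xy_3$), $zy_2y_3$ is adjacent to $zy_3y_1$, and $xy_2y_3$ is adjacent to $zy_2y_3$.
\end{itemize}
Either way the four undamaged edges are connected in the adjacency graph, so they all share one colour. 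Then one colour class of $G$ has at least four edges, contradicting the assumption that $G$ has exactly three edges of each colour.

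I expect no serious obstacle. The only points needing care are checking that each case leaves a connected set of four undamaged edges, and that $p$ is distinct from the vertices of the two edges to which Fact~\ref{fct: common nbr of adjacent triangles} is applied; the second point holds automatically because $p$ is a neighbour of the relevant pairs.
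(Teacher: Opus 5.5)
Your proof is correct, but it is organised differently from the paper's. The paper splits into cases by the colour pattern of $G$. Up to symmetry these are: $x$ lies in three red edges; or $x$ lies in two red edges, and the blue edge at $x$ meets the red edge at $z$ in either two vertices or one vertex. The first and third cases are handled essentially as you do, by exhibiting pairs of adjacent, differently coloured edges that cannot all contain $P'$. The second case instead uses Lemma~\ref{lma: link graph properties}\ref{itm: i - no properly col C4}: $xy_1zy_2$ is a properly coloured $C_4$ in $L(y_3)$.

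You split instead by the exceptional pair $P'$, which gives only two cases under the automorphisms of $G$. You show that the four edges not containing $P'$ are linked by adjacencies among themselves, so the colouring never enters. This buys several things:
\begin{itemize}
\item The argument is shorter and uniform.
\item It uses only Proposition~\ref{Prp: 8 from 9 and 7 with one fixed}\ref{itm: i - 8 pairs common nbr} and Fact~\ref{fct: common nbr of adjacent triangles}. It avoids Lemma~\ref{lma: link graph properties}, which itself rests on Proposition~\ref{Prp: 8 from 9 and 7 with one fixed}\ref{itm (ii): P and 6 other pairs}.
\item It proves a stronger statement: in any copy of $G$ in $H$, at least four of the six edges share a colour.
\end{itemize}
The paper's colour-based split, by contrast, lets it reuse a lemma it has already proved.

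One point of wording. The connectivity you need is in the adjacency graph induced on the undamaged edges, not in the full adjacency graph; a chain passing through a damaged edge would not transfer colours. The adjacencies you actually list in both cases are between undamaged edges, so the argument stands as written.
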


\begin{proof}
Suppose for a contradiction that $H$ contains a copy of~$G$. Let $\mathcal{P}$ be the set of all pairs which are edges of $\partial G$, so $|\mathcal{P}| = 9$. By Proposition~\ref{Prp: 8 from 9 and 7 with one fixed}\ref{itm: i - 8 pairs common nbr} there exists $P' \in \mathcal{P}$ for which $\bigcap_{P \in \mathcal{P} \setminus P'} N_H(P) \neq \emptyset$.

Without loss of generality we may assume that $x$ is contained in at least two red edges. So either~$x$ is in three red edges, in which case we may assume without loss of generality we are in Case 1 below, or~$x$ is in precisely two red edges, in which case we may assume without loss of generality we are in either Case 2 or Case 3 below, according to whether the intersection of the blue edge containing $x$ and the red edge containing $z$ has size one or two.    

\medskip \noindent\textbf{Case $1$:} $\phi(xy_1y_2) = \phi(xy_1y_3) = \phi(xy_2y_3) = r$ and $y_3 \in P'$. In this case, our choice of $P'$ ensures that $\bigcap_{P \in \partial xy_1y_2 \cup \partial zy_1y_2} N_H(P) \neq \emptyset$; since $\phi(zy_1y_2) = b$ this contradicts Fact~\ref{fct: common nbr of adjacent triangles}. 
    
\medskip \noindent\textbf{Case $2$:} $\phi(xy_1y_2) = \phi(xy_2y_3) = \phi(zy_1y_3) = r$ and $\phi(zy_1y_{2}) = \phi(zy_2y_3) = \phi(xy_1y_3) = b$. In this case $xy_1zy_2$ is a properly coloured $C_4$ in~$L(y_3)$, contradicting Lemma~\ref{lma: link graph properties}\ref{itm: i - no properly col C4}.  

\medskip \noindent\textbf{Case $3$:} $\phi(xy_1y_{2}) = \phi(xy_2y_3) = \phi(zy_1y_2) = r$ and~$\phi(xy_1y_3) = \phi(zy_2y_3) = \phi(zy_1y_3)=b$. In this case set 
$    \mathcal{M} = \{  (xy_1y_{2},xy_1y_3)  ,   (xy_2y_{3},zy_2y_3) , (zy_1y_{2},zy_2y_3)  \}$.
Observe that no pair $P \in \mathcal{P}$ has $P \in \partial e \cup \partial e'$ for all three pairs $(e, e') \in \mathcal{M}$. It follows that some pair $(e, e') \in \mathcal{M}$ has $\bigcap_{P \in \partial e \cup \partial e'} N_H(P) \neq \emptyset$. Since each pair $(e, e') \in \mathcal{M}$ has $|e \cap e'| = 2$ and  $\phi(e) = r  \ne b =  \phi(e')$, this contradicts Fact~\ref{fct: common nbr of adjacent triangles}.
\end{proof}
    
In the next lemma, we show that $H$ cannot contain a tight walk of length 4 with alternating colours (see Figure~\ref{fig: alt tight path length 4} for an illustration).

\begin{figure}[!t]
        \centering
        \includegraphics[width=0.5\linewidth]{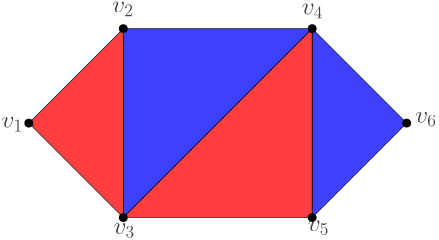}
        \caption{A tight path $v_1v_2\dots v_6$ with~$\phi(v_1v_2v_3) = \phi(v_3v_4v_5) = r$ and~$\phi(v_2v_3v_4) = \phi(v_4v_5v_6) = b$ mentioned in Lemma~\ref{lma: no alt tight path length 4}}
        \label{fig: alt tight path length 4}
    \end{figure}

\begin{lemma} \label{lma: no alt tight path length 4}
    Let~$H$ be a~$3$-graph on~$n$ vertices with~$\delta_2(H) >7n/9$  and let~$r, b$ be distinct tight components of~$\T(H)$. Let~$\mathscr{P} = v_1v_2\dots v_6$ be a tight walk such that~$\phi(v_1v_2v_3) = \phi(v_3v_4v_5) = r$ and~$\phi(v_2v_3v_4) = \phi(v_4v_5v_6) = b$. Then~$\mathscr{P}$ is not a subgraph of~$H$.
\end{lemma}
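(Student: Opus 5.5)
The plan is to locate a single common neighbour $p$ of almost all the pairs of the walk, pin down the colours of four edges through $p$, and then recognise the colour-balanced configuration forbidden by Proposition~\ref{prp: no colour balanced double tetrahedron}. Suppose for a contradiction that such a walk exists, and write $e_1 = v_1v_2v_3$, $e_2=v_2v_3v_4$, $e_3=v_3v_4v_5$, $e_4=v_4v_5v_6$. Let $\mathcal{P} = \bigcup_{i\in[4]}\partial e_i$, which consists of nine pairs. We need the nine $v_i$ to be distinct, since otherwise $\mathcal{P}$ could be smaller; this holds when the walk is a path, and repeated-vertex cases should be disposed of separately (they degenerate easily). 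By Proposition~\ref{Prp: 8 from 9 and 7 with one fixed}\ref{itm: i - 8 pairs common nbr} some $P'\in\mathcal{P}$ has a vertex $p$ lying in $N_H(P)$ for all $P\in\mathcal{P}\setminus P'$.

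\emph{Step 1: locating $P'$.} Each consecutive pair $(e_i,e_{i+1})$ satisfies $|e_i\cap e_{i+1}|=2$ and $\phi(e_i)\ne\phi(e_{i+1})$. Hence, by Fact~\ref{fct: common nbr of adjacent triangles}, the five pairs of $\partial e_i\cup\partial e_{i+1}$ have no common neighbour. So $P'$ must lie in all three of $\partial e_1\cup\partial e_2$, $\partial e_2\cup\partial e_3$ and $\partial e_3\cup\partial e_4$. A direct check shows that the only such pair is $v_3v_4$. Therefore $p$ is a common neighbour of every pair in $\mathcal{P}$ except $v_3v_4$. In particular $pv_1v_2v_3$ and $pv_4v_5v_6$ are copies of $K^3_4$, which gives $\phi(pv_2v_3)=r$ and $\phi(pv_4v_5)=b$. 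Moreover $pv_2v_4$ and $pv_3v_5$ are edges of $H$, with colours still unknown.

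\emph{Step 2: determining the unknown colours via link graphs.} In $L(v_3)$ the cycle $p\,v_2\,v_4\,v_5$ has edges coloured $\phi(pv_2v_3)=r$, $\phi(v_2v_3v_4)=b$, $\phi(v_3v_4v_5)=r$ and $\phi(pv_3v_5)$. If $\phi(pv_3v_5)=b$, this $C_4$ is properly coloured, contradicting Lemma~\ref{lma: link graph properties}\ref{itm: i - no properly col C4}; hence $\phi(pv_3v_5)=r$. Symmetrically, in $L(v_4)$ the cycle $p\,v_5\,v_3\,v_2$ has colours $b,r,b,\phi(pv_2v_4)$, so the same lemma forces $\phi(pv_2v_4)=b$. (Only two colours can occur here, since every edge of $H$ lies in $r$ or $b$ by Lemma~\ref{Lma: two tight components}; if one wants to avoid that, the three-colour cases are excluded by Lemma~\ref{lma: link graph properties}\ref{itm: ii - no C4 with three colours}.)

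\emph{Step 3: the forbidden double tetrahedron.} Consider the vertices $v_2,v_5$ together with the triangle $\{v_3,v_4,p\}$. The six edges
\[ v_2v_3v_4\,(b),\ v_2pv_3\,(r),\ v_2pv_4\,(b),\ v_5v_3v_4\,(r),\ v_5pv_3\,(r),\ v_5pv_4\,(b) \]
form exactly the $3$-graph $G$ of Proposition~\ref{prp: no colour balanced double tetrahedron}, with $x=v_2$, $z=v_5$ and $\{y_1,y_2,y_3\}=\{v_3,v_4,p\}$. Three of these edges are red and three are blue, which contradicts that proposition.

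I expect the main obstacle to be Step 1, namely confirming that $v_3v_4$ is the only possible missing pair, together with checking that the relevant vertices are distinct so that the counting in Proposition~\ref{Prp: 8 from 9 and 7 with one fixed} applies. Once $p$ is found, Steps 2 and 3 are mechanical colour-chasing.
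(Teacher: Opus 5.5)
Your Steps 1--3 are exactly the paper's argument. Like the paper, you apply Proposition~\ref{Prp: 8 from 9 and 7 with one fixed}\ref{itm: i - 8 pairs common nbr} to the nine shadow pairs and use Fact~\ref{fct: common nbr of adjacent triangles} to force $P'=v_3v_4$. You then get $\phi(pv_3v_5)=r$ and $\phi(pv_2v_4)=b$ from properly coloured $C_4$'s in $L(v_3)$ and $L(v_4)$. Finally you apply Proposition~\ref{prp: no colour balanced double tetrahedron} with the same roles $x=v_2$, $z=v_5$.

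The one real gap is the reduction from a tight walk to a tight path, which you dismiss as ``degenerating easily''. (You also write ``the nine $v_i$''; you mean the nine pairs.) The claim is not accurate.

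\begin{itemize}
\item If $v_1=v_5$ (or symmetrically $v_6=v_2$), then $\mathcal{P}$ has only eight pairs.
\item The intersection of the three five-pair sets in Step~1 is then $\{v_3v_4,v_3v_5\}$, so $P'=v_3v_5$ cannot be excluded.
\item In that case $pv_3v_5$ need not be an edge, your $C_4$ in $L(v_3)$ is lost, and the argument stalls.
\end{itemize}

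The fix is the one the paper uses, done before anything else.

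\begin{itemize}
\item First, $v_2,v_3,v_4,v_5$ are automatically distinct. In particular $v_2\neq v_5$, since otherwise $v_2v_3v_4$ and $v_3v_4v_5$ would be the same edge with two different colours.
\item By Fact~\ref{fct: tetrahedral codegree}, there are more than $n/3$ vertices $u_1$ with $\phi(u_1v_2v_3)=r$, and more than $n/3$ vertices $u_6$ with $\phi(v_4v_5u_6)=b$.
\item So you can choose $u_1$ and $u_6$ distinct from each other and from $v_2,\dots,v_5$.
\end{itemize}

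Then $u_1v_2v_3v_4v_5u_6$ is a tight path with the same colour pattern, its nine shadow pairs are distinct, and your Steps 1--3 apply verbatim.
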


\begin{proof}
Suppose for a contradiction that $H$ contains a copy of~$\mathscr{P}$. Observe that the vertices $v_2, v_3, v_4$ and $v_5$ must be distinct since $v_2v_3v_4$ is a blue edge and $v_3v_4v_5$ is a red edge. Also, since $\phi(v_1v_2v_3) = r$ we have $r \in \phi(v_2v_3)$ and so by Fact~\ref{fct: tetrahedral codegree} there are at least $n/3$ vertices $u_1$ for which $u_1v_2v_3$ is a red edge of~$H$. In the same way, since $\phi(v_4v_5v_6)$ is a blue edge of $H$ there are at least $n/3$ vertices $u_6$ for which $v_4v_5u_6$ is a blue edge of $H$. By choosing such $u_1$ and $u_6$ to be distinct from each other and from $v_2, v_3, v_4$ and $v_5$, we obtain a tight path $u_1v_2v_3v_4v_5u_6$ in $H$ with~$\phi(u_1v_2v_3) = \phi(v_3v_4v_5) = r$ and~$\phi(v_2v_3v_4) = \phi(v_4v_5u_6) = b$. We therefore assume without loss of generality that $\mathscr{P}$ itself is a tight path, that is, that $v_1,v_2,v_3,v_4,v_5$ and $v_6$ are all distinct.

Let $\mathcal{P}$ be the set of pairs which are edges of $\partial \mathscr{P}$, so~$|\mathcal{P}| = 9$. By Proposition~\ref{Prp: 8 from 9 and 7 with one fixed}\ref{itm: i - 8 pairs common nbr}, there exists~$P' \in \mathcal{P}$ such that $\bigcap_{P \in \mathcal{P} \setminus P'} N_H(P) \neq \emptyset$. If~$P' \neq v_3v_4$, then $\bigcap_{P \in \partial e \cup \partial e'} N_H(P) \neq \emptyset$ for some 
$$(e, e') \in \{(v_1v_2v_3, v_2v_3v_4), (v_3v_4v_5, v_4v_5v_6)\}.$$ 
By Fact~\ref{fct: common nbr of adjacent triangles} it follows that $e$ and $e'$ are in the same tight component, contradicting our assumption that $\phi(e) = r$ and $\phi(e') = b$. 

We therefore assume that $P' = v_3v_4$. Choose~$v \in \bigcap_{P \in \mathcal{P} \setminus P'} N_H(P)$. Then~$vv_1v_2v_3$ and $vv_4v_5v_6$ are copies of~$K_4^3$ in $H$, so $\phi(vv_2v_3) = \phi(v_1v_2v_3) = r$ and $\phi(vv_4v_5) = \phi(v_4v_5v_6) = b$.  
If~$\phi(vv_2v_4) = r$ (or $\phi(vv_3v_5) = b$), then~$vv_2v_3v_5$ (or $vv_2v_4v_5$) forms a properly coloured copy of~$C_4$ in~$L(v_4)$ (or in~$L(v_3)$ respectively), contradicting Lemma~\ref{lma: link graph properties}\ref{itm: i - no properly col C4}. So we must have~$\phi(vv_2v_4) = b$ and~$\phi(vv_3v_5) = r$, and so 
$$\phi(vv_2v_3) = \phi(v_3v_4v_5) = \phi(vv_3v_5) = r \text{ and } \phi(vv_4v_5) = \phi(vv_2v_4) = \phi(v_2v_3v_4) = b,$$ giving a contradiction to Proposition~\ref{prp: no colour balanced double tetrahedron} with~$v_2, v_5, v, v_3$ and~$v_4$ playing the roles of~$x, z, y_1, y_2$ and~$y_3$ respectively.\end{proof}

The next lemma forbids an edge-coloured tight walk of length~$5$ (see Figure~\ref{fig: weird tight path of length 5} for an illustration).

    \begin{figure}[!t]
        \centering
        \includegraphics[width=0.6\linewidth]{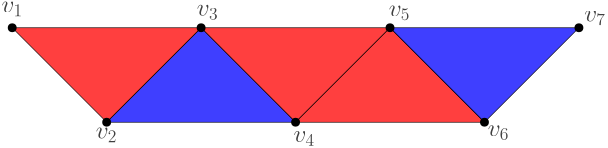}
        \caption{
        A tight path $v_1v_2\dots v_7$ with~$\phi(v_1v_2v_3) = \phi(v_3v_4v_5) = \phi(v_4v_5v_6)= r$ and~$\phi(v_2v_3v_4) = \phi(v_5v_6v_7) = b$
        mentioned in Lemma~\ref{lma: no weird tight path length 5}}
        \label{fig: weird tight path of length 5}
    \end{figure}

\begin{lemma} \label{lma: no weird tight path length 5}
    Let~$H$ be a~$3$-graph on~$n$ vertices such that~$\delta_2(H) >7n/9$ and let~$r, b$ be distinct tight components of~$\T(H)$. Let~$\mathscr{P} = v_1v_2\dots v_7$ be a tight walk such that~$\phi(v_1v_2v_3) = \phi(v_3v_4v_5) = \phi(v_4v_5v_6)= r$ and~$\phi(v_2v_3v_4) = \phi(v_5v_6v_7) = b$. Then~$\mathscr{P}$ is not a subgraph of~$H$. 
\end{lemma}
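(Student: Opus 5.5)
The plan is to mimic the proof of Lemma~\ref{lma: no alt tight path length 4}: find a suitable common neighbour of many pairs, then reduce to a configuration that is already forbidden. Write $e_1=v_1v_2v_3$ (red), $e_2=v_2v_3v_4$ (blue), $e_3=v_3v_4v_5$ (red), $e_4=v_4v_5v_6$ (red) and $e_5=v_5v_6v_7$ (blue).

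First I reduce to the case that $\mathscr{P}$ is a tight path, exactly as before. The vertices $v_2,\dots,v_6$ are pairwise distinct except possibly $v_2=v_6$; I expect this to be excluded or handled separately, for instance via Lemma~\ref{lma: link graph properties} in $L(v_4)$. I then replace $v_1$ by one of the at least $n/3$ vertices $u_1$ with $u_1v_2v_3$ red, and $v_7$ by one of the at least $n/3$ vertices $u_7$ with $v_5v_6u_7$ blue. Both counts come from Fact~\ref{fct: tetrahedral codegree}.

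Next I apply Proposition~\ref{Prp: 8 from 9 and 7 with one fixed}\ref{itm: i - 8 pairs common nbr} to the $9$ pairs of $\partial(v_2v_3\dots v_7)$, that is, $\partial e_2\cup\dots\cup\partial e_5$. This gives a pair $P'$ and a vertex $p$ lying in $N_H(P)$ for every other pair $P$. If $P'\notin\partial e_2\cup\partial e_3$, then Fact~\ref{fct: common nbr of adjacent triangles} forces $\phi(e_2)=\phi(e_3)$. Similarly, if $P'\notin \partial e_4\cup\partial e_5$, it forces $\phi(e_4)=\phi(e_5)$. Both are contradictions, so $P'=v_4v_5$. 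Then $pv_2v_3v_4$ and $pv_5v_6v_7$ are copies of $K^3_4$. Hence $pv_2v_3$, $pv_3v_4$, $pv_5v_6$ and $pv_6v_7$ are all blue. Moreover $pv_3v_5$ and $pv_4v_6$ are edges of $H$, since $p$ is a neighbour of every pair except $v_4v_5$.

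Now I use $e_1$ and case on the colour of $pv_3v_5$. If $pv_3v_5$ is red, then the tight walk $v_1v_2v_3pv_5v_6$ has colours $r,b,r,b$. This contradicts Lemma~\ref{lma: no alt tight path length 4}. If $pv_3v_5$ is blue, I consider the colour of $pv_4v_6$. Here I would look at the $C_4$ $v_3v_4v_6p$ in $L(v_5)$ and at the configuration $\{v_3,v_4,v_5,v_6,p\}$, aiming to apply Proposition~\ref{prp: no colour balanced double tetrahedron} or Lemma~\ref{lma: link graph properties}\ref{itm: i - no properly col C4},\ref{itm: ii - no C4 with three colours}. In this case the edges $v_3v_4v_5$ and $v_4v_5v_6$ are red, while $pv_3v_4$, $pv_3v_5$ and $pv_5v_6$ are blue. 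If $pv_4v_6$ is red, the walk $pv_3v_4\ldots$ or $v_7v_6p v_4\ldots$ should give an alternating walk. Concretely, $v_7v_6pv_4v_3v_2$ has edge $v_6pv_4$ red and $pv_4v_3$ blue, and I would try to rearrange it into an $r,b,r,b$ pattern. If $pv_4v_6$ is blue, I would look for a balanced double tetrahedron on $v_3,v_4,v_5,v_6,p$.

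The main obstacle is this final case analysis. In particular, $p$ is not a neighbour of $v_4v_5$, so edges through $p,v_4,v_5$ are unavailable. If the direct cases fail, my fallback is a second averaging with Proposition~\ref{Prp: 8 from 9 and 7 with one fixed}\ref{itm (ii): P and 6 other pairs}, fixing a pair such as $v_4v_5$, to obtain a vertex adjacent to $v_4v_5$ as well.
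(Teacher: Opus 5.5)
Your averaging step producing $p$ (with $P'=v_4v_5$) is correct, and so is your first case: if $pv_3v_5$ is red, then $v_1v_2v_3pv_5v_6$ is coloured $r,b,r,b$, contradicting Lemma~\ref{lma: no alt tight path length 4}. The gap is the subcase where $pv_3v_5$ and $pv_4v_6$ are both blue. The tools you name cannot close it.
\begin{itemize}
\item The six edges you know on $\{v_3,v_4,v_5,v_6,p\}$ do form a double tetrahedron, with apexes $v_3,v_6$ and triangle $pv_4v_5$. But each apex sees colours $b,b,r$, so there are two red and four blue edges, and Proposition~\ref{prp: no colour balanced double tetrahedron} does not apply.
\item The $C_4$ $v_3v_4v_6p$ in $L(v_5)$ is coloured $r,r,b,b$, so Lemma~\ref{lma: link graph properties} gives nothing.
\item The only red edges available are $v_1v_2v_3$, $v_3v_4v_5$ and $v_4v_5v_6$. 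From these and the known blue edges, no $r,b,r,b$ tight walk can be formed.
\end{itemize}
The subcase where $pv_4v_6$ is red is fine, but you did not exhibit the walk. The walk $v_4v_5v_6pv_4v_3$ is coloured $r,b,r,b$, whatever the colour of $pv_3v_5$.

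What is missing is one more red edge through $v_4v_5$. First note that $\phi(v_4v_5)=\{r\}$: a blue edge $v_4v_5w$ would make $v_1v_2v_3v_4v_5w$ an $r,b,r,b$ walk. Since $|N_b(v_5v_6)|>n/3$ and $|N_H(v_4v_5)|>7n/9$, some $w\in N_H(v_4v_5)\cap N_b(v_5v_6)$ exists, and $v_4v_5w$ is red. Then if $pv_4v_6$ is blue, the walk $v_4wv_5v_6v_4p$ is coloured $r,b,r,b$.

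The paper builds this into its reduction step. Besides replacing $v_1$, it replaces $v_6$ by some $y\in N_H(v_4v_5)\cap N_b(v_5v_6)$ and $v_7$ by some $z\in N_b(v_5y)\cap N_H(v_4v_5)$, so that $v_4v_5v_7$ is red. It then splits only on the colour of $pv_4v_6$, using the walks $v_4v_5v_6pv_4v_3$ and $v_4v_7v_5v_6v_4p$; your case split on $pv_3v_5$ is unnecessary.

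Replacing $v_6$ also repairs your distinctness claim, which is incomplete. Besides $v_2=v_6$, the coincidence $v_3=v_6$ (i.e.\ $v_3v_4v_5=v_4v_5v_6$) is also consistent with the colouring, since both edges are red. In that case your nine pairs collapse, and $P'$ could also be $v_3v_4$ or $v_3v_5$, so it is no longer forced to be $v_4v_5$.
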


\begin{proof}
Suppose for a contradiction that $H$ contains a copy of~$\mathscr{P}$. Again the vertices $v_2, v_3, v_4$ and $v_5$ must be distinct. Since $\phi(v_5v_6v_7) = b$ we have $|N_b(v_5v_6)| \geq n/3$ by Fact~\ref{fct: tetrahedral codegree}, so there are at least $n/3 - 2n/9 = n/9$ choices for $y \in N(v_4v_5)  \cap  N_b(v_5 v_6)$. By Lemma~\ref{lma: no alt tight path length 4} we must have $\phi( v_4,v_5 ) = \{r\}$, so each such $y$ has $\phi(v_4 v_5 y) = r$ and $ \phi (v_5 v_6 y)=b$. Having chosen $y$, by Fact~\ref{fct: tetrahedral codegree} there are at least $n/9$ and $n/3$ choices for each of $z \in N_b(v_5 y ) \cap N_H(v_4v_5)$ and $x \in N_{r}(v_2v_3)$ respectively, so we may choose such $x, y$ and $z$ to be distinct from each other and from $v_2, v_3, v_4$ and $v_5$, giving a tight path $xv_2v_3 v_4 v_5yz$ in $H$ with $\phi(xv_2v_3) = \phi(v_3v_4v_5) = \phi(v_4v_5y)= r$ and~$\phi(v_2v_3v_4) = \phi(v_5yz) = b$ for which $v_4v_5z$ is an edge of $H$ with $\phi(v_4v_5z) = r$. 
We therefore assume without loss of generality that $\mathscr{P}$ itself is a tight path (that is, that $v_1,v_2,v_3,v_4,v_5, v_6$ and $v_7$ are all distinct) and also that $v_4v_5v_7 \in E(H)$ with $\phi(v_4v_5v_7 )= r$.

Let $\mathcal{P}$ be the set of pairs which are edges of $\partial \mathscr{P} \setminus \{v_1\}$, so~$|\mathcal{P}| = 9$. By Proposition~\ref{Prp: 8 from 9 and 7 with one fixed}\ref{itm: i - 8 pairs common nbr}, there exists $P' \in \mathcal{P}$ such that $\bigcap_{P \in \mathcal{P} \setminus P'} N_H(P) \neq \emptyset$. If~$P' \neq v_4v_5$, then $\bigcap_{P \in \partial e \cup \partial e'} N_H (P) \neq \emptyset$ for some 
$$(e, e') \in \{(v_3v_4v_5,v_2v_3v_4), (v_4v_5v_6, v_5v_6v_7)\}.$$
By Fact~\ref{fct: common nbr of adjacent triangles} it follows that $e$ and $e'$ are in the same tight component, contradicting our assumption that $\phi(e) = r$ and $\phi(e') = b$.

We therefore assume that~$P' = v_4v_5$. Choose~$x \in \bigcap_{P \in \mathcal{P} \setminus P'} N_H(P)$. Then $xv_5v_6v_7$ and~$xv_2v_3v_4$ are copies of $K_4^3$ in $H$, so $\phi(xv_5v_6) = \phi(v_5v_6v_7) = b$ and $\phi(xv_3v_4) = \phi(v_2v_3v_4) = b$.
Since $v_4v_6 \in \mathcal{P}$ we know that $xv_4x_6 \in E(H)$. So if $\phi(xv_4v_6) = r$, then $v_4 v_5 v_6 x v_4 v_3$ is a tight walk in~$H$ with $\phi(v_4v_5v_6) = \phi(xv_4v_6) = r$ and~$\phi(xv_5v_6) = \phi(xv_3v_4) = b$. On the other hand, if $\phi(xv_4v_6) = b$, then $ v_4 v_7 v_5 v_6 v_4 x$ is a tight walk in~$H$ with $\phi(v_4v_5v_7) = \phi(v_4v_5v_6) = r$ and~$\phi(v_5v_6v_7) = \phi(x v_4v_6) = b$. In either case the tight walk obtained gives a contradiction to Lemma~\ref{lma: no alt tight path length 4}.
\end{proof}

By combining the previous two lemmas we obtain the following proposition.

\begin{proposition}\label{z is red}
    Let~$H$ be a~$3$-graph on~$n$ vertices with~$\delta_2(H) >7n/9$ such that $\T(H)$ has precisely two distinct tight components, $r$ and $b$. If $xyz, wxy, wyz \in E(H)$ are such that $\phi(xyz)=\phi(wyz)=r$ and $\phi(wxy)=b$, then $\phi(z)=\{r\}$.
\end{proposition}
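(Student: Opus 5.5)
The plan is to argue by contradiction and exhibit one of the coloured tight walks that Lemma~\ref{lma: no alt tight path length 4} or Lemma~\ref{lma: no weird tight path length 5} forbids. Those lemmas are stated for tight walks, so repeated vertices are allowed, and this is what lets us route the walk through $z$ twice.

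Suppose that $\phi(z) \neq \{r\}$. Since $\phi(xyz) = r$ we have $r \in \phi(z)$, and $\T(H)$ has only the two components $r$ and $b$. Hence $b \in \phi(z)$, so $N_{\partial b}(z) \neq \emptyset$. By Fact~\ref{fct: tetrahedral codegree} we get $\deg_{\partial b}(z) > n/3$. Together with $\deg_H(wz) > 7n/9$, this lets us choose a vertex $v_6 \in N_{\partial b}(z) \cap N_H(wz)$. In particular $v_6 \notin \{w,z\}$, $wzv_6 \in E(H)$ and $b \in \phi(zv_6)$.

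The base of the construction is the tight walk $z\,x\,y\,w\,z$. Its consecutive triples are $zxy$, $xyw$ and $ywz$, with colours $r$, $b$, $r$ by hypothesis. We extend it by $v_6$, which adds the triple $wzv_6$, and split into two cases according to $\phi(wzv_6)$.
\begin{enumerate}[label={\rm(\alph*)}]
\item If $\phi(wzv_6) = b$, then $z\,x\,y\,w\,z\,v_6$ is a tight walk with colour pattern $r,b,r,b$. This contradicts Lemma~\ref{lma: no alt tight path length 4}, taking $v_1,\dots,v_6$ to be $z,x,y,w,z,v_6$.
\item If $\phi(wzv_6) = r$, we use that $b \in \phi(zv_6)$ to pick $v_7$ with $zv_6v_7 \in E(H)$ and $\phi(zv_6v_7) = b$. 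Then $z\,x\,y\,w\,z\,v_6\,v_7$ is a tight walk whose triples $zxy$, $xyw$, $ywz$, $wzv_6$, $zv_6v_7$ have colours $r,b,r,r,b$. This is exactly the configuration excluded by Lemma~\ref{lma: no weird tight path length 5}.
\end{enumerate}
Either case gives a contradiction, so $\phi(z) = \{r\}$.

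The step needing most care is checking that these walks really satisfy the hypotheses of the two lemmas. The consecutive triples must be edges in the stated order with the stated colours. The proofs of those lemmas only use that $v_2,\dots,v_5$ are distinct, which they derive from the colouring itself: here these are $x,y,w,z$, which are distinct since $xyz$ and $wyz$ are edges of different colours from $wxy$. The lemmas are stated for walks and their proofs begin by replacing repeated end vertices with fresh ones, so repetition elsewhere is harmless. The remaining risk is that $v_6$ or $v_7$ coincides with one of the named vertices. This is allowed in a walk, but if it caused trouble I would instead choose $v_6$ and $v_7$ avoiding $\{x,y\}$, which the $>n/3$ lower bounds easily permit.
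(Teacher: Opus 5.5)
Your proof is correct and is essentially the paper's argument: the paper uses the same walk with the roles of $x$ and $w$ interchanged, namely $z\,w\,y\,x\,z\,v'\,v$, and splits on the colour of $xzv'$ to contradict either Lemma~\ref{lma: no alt tight path length 4} or Lemma~\ref{lma: no weird tight path length 5}. The only difference is the order of choices: the paper first fixes $v$ with $b\in\phi(vz)$ and then takes $v'\in N_b(vz)\cap N_H(xz)$, whereas you choose $v_6$ first and then $v_7$; your worry about $v_6,v_7$ landing in $\{x,y\}$ is harmless, since both lemmas are stated for walks.
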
    

\begin{proof}
Suppose for a contradiction that $b \in \phi(z)$, so we may choose $v \in V(H)$ with $b \in \phi(vz)$. By Fact~\ref{fct: tetrahedral codegree} we may then choose $v' \in N_{ b}(vz) \cap N_H(xz)$, and in particular we have $\phi(zvv') = b$. On the other hand, we must have $\phi(xzv')=r$ as otherwise $zwyxzv'$ would be a tight walk on $6$ vertices which contradicts Lemma~\ref{lma: no alt tight path length 4}. So $zwyxzv'v$ is a tight walk whose colouring contradicts Lemma~\ref{lma: no weird tight path length 5}. We conclude that $b \notin \phi(z)$, so $\phi(z) = \{r\}$.
\end{proof}

We say that a tight component is \emph{spanning} in~$H$ if every vertex is in an edge of that tight component. By Lemma~\ref{Lma: two tight components} we know that $\T(H)$ has exactly two tight components,~$r$ and~$b$, and Fact~\ref{fct: colours are intersecting family} implies that at least one of them is spanning, say~$r$. 
In the next lemma, we show that $b$ is not spanning and study its properties.

\begin{lemma} \label{lma: some vts see only red and blue K_5}
    Let~$H$ be a~$3$-graph on~$n$ vertices with~$\delta_2(H) > 7n/9$, let~$r, b$ be distinct tight components of~$\T(H)$, and suppose that $r$ is spanning. Then the following statements hold. 
    \begin{enumerate}[label={\rm(\roman*)}]
    \item $b$ is not spanning. \label{itm: red spanning}
    \item Every $xy \in \binom{V(H)}{2}$ with $\phi(xy)=\{r, b\}$ has $N_{r}(xy) \subseteq U:= \{z \in V(H) \colon  \phi(z)=\{r\}\}$.\label{itm: many red vtx}
    \item There is a copy of $K^3_5$ in $H$ all of whose edges are coloured $b$. \label{itm: blue K5}
    \end{enumerate}
\end{lemma}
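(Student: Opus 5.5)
I will prove part \ref{itm: many red vtx} first, then deduce \ref{itm: red spanning} from it, and finally prove \ref{itm: blue K5} using Theorem~\ref{Thm: codegree density of K35}.

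\textbf{Part (ii).} Let $xy$ have $\phi(xy)=\{r,b\}$ and take $z\in N_r(xy)$; I need $\phi(z)=\{r\}$. By Proposition~\ref{prp: weird configurations exist} (with $T_1=r$, $T_2=b$) there is $w=w_z$ with $wxy,wyz\in E(H)$ and $\phi(wxy)=b$. If $\phi(wyz)=r$, then Proposition~\ref{z is red} applied to $xyz,wxy,wyz$ gives $\phi(z)=\{r\}$ directly. Otherwise $\phi(wyz)=b$, and I would swap the roles of $x$ and $y$: Proposition~\ref{prp: weird configurations exist} applied to the pair $yx$ gives $w'$ with $w'xy, w'xz\in E(H)$ and $\phi(w'xy)=b$. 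If $\phi(w'xz)=r$ we again finish by Proposition~\ref{z is red}. In the remaining case both $wyz$ and $w'xz$ are blue while $xyz$ is red. I expect this to contradict Lemma~\ref{lma: no alt tight path length 4} or Lemma~\ref{lma: no weird tight path length 5}, for instance via a tight walk such as $w\,y\,z\,x\,w'$ extended at both ends using Fact~\ref{fct: tetrahedral codegree} (choosing vertices in $N_r$ or $N_b$ of the appropriate pairs) to produce an alternating pattern $b,r,b,\dots$. Pinning down this case is the fiddly part of (ii). A fallback is to choose $w$ more carefully: by Fact~\ref{fct: tetrahedral codegree} there are more than $n/3$ candidates in $N_b(xy)$, and at least $n/9$ of them also lie in $N(yz)$, so I can try to force $\phi(wyz)=r$ using Lemma~\ref{lma: no alt tight path length 4} applied to walks through $z$.

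\textbf{Part (i).} Suppose $b$ is spanning. By Proposition~\ref{prp: weird configurations exist} some pair $xy$ has $\phi(xy)=\{r,b\}$, and since $\phi(xyz)\in\phi(xy)\subseteq\{r,b\}$ we have $N_r(xy)\neq\emptyset$. By (ii), any $z\in N_r(xy)$ has $\phi(z)=\{r\}$, so $b\notin\phi(z)$, contradicting that $b$ is spanning.

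\textbf{Part (iii).} Let $U$ be as in (ii), and let $B:=V(H)\setminus U=\{v: \phi(v)=\{r,b\}\}$. Take a blue edge $e$ and a pair $xy\subseteq e$. Then $\phi(xy)=\{r,b\}$, since $r$ is spanning and every vertex of a blue edge is in $b$; I expect this to follow from Lemma~\ref{lma: no alt tight path length 4}, or else I choose $xy$ appropriately. By (ii), $N_r(xy)\subseteq U$, while $N_b(xy)\subseteq B$. Using this codegree splitting, I aim to show that $H[B]$ has high minimum codegree inside $B$ with every edge blue: any edge meeting $U$ must be red, since vertices of $U$ see only $r$. Concretely, for $u,v\in B$ with $\phi(uv)\ni b$, the $r$-neighbours of $uv$ lie in $U$, so $\deg_H(uv,B)\ge \deg_b(uv)$. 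I then want to bound $|U|$ so that $\delta_2(H[B])\ge (0.74+\eps)|B|$; the degrees in $H$ give roughly $|B|\ge 7n/9-|U|+\dots$, and $|U|$ should be small by a counting argument on the $r$-edges with pairs in $B$. Theorem~\ref{Thm: codegree density of K35} then gives a $K^3_5$ in $H[B]$. Every edge of this $K^3_5$ is blue by (ii) applied inside $B$, since a red edge $uvz$ with $u,v\in B$ and $\phi(uv)=\{r,b\}$ forces $z\in U$.

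The main obstacle is this step: establishing that every pair inside $B$ is in $b$, and that $|U|$ is small enough to give a codegree ratio above $0.74$ in $H[B]$. Since $7/9\approx0.778$, there is little slack.
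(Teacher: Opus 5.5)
Part (i) is deduced from (ii) just as in the paper. However, (ii) and (iii) both have genuine gaps.

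\textbf{Part (ii).} You handle $\phi(wyz)=r$ correctly via Proposition~\ref{z is red}, but the case $\phi(wyz)=b$ is left open. Note that your argument for (ii) never uses the hypothesis that $r$ is spanning, and that hypothesis is exactly what disposes of this case. Apply Proposition~\ref{z is red} with the colours $r,b$ interchanged and the roles of $z,w$ interchanged, to the red edge $xyz$ and the blue edges $wxy,wyz$. This gives $\phi(w)=\{b\}$, contradicting that $r$ is spanning. So $\phi(wyz)=r$ for every such $w$, and no detour via $w'$ is needed.

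The alternative you sketch fails concretely. In your remaining case the same symmetric application gives $\phi(w)=\phi(w')=\{b\}$. So there is no red edge at $w$ or $w'$ with which to extend the $b,r,b$ walk $w\,y\,z\,x\,w'$ into a pattern forbidden by Lemma~\ref{lma: no alt tight path length 4} or Lemma~\ref{lma: no weird tight path length 5}. The fallback of choosing $w$ more carefully is only a hope, not an argument.

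\textbf{Part (iii).} The plan to find $K^3_5$ inside $H[B]$ via Theorem~\ref{Thm: codegree density of K35} cannot be carried out, for three reasons.
\begin{enumerate}[label={\rm(\alph*)}]
\item You aim to show $|U|$ is small. But (ii) together with Fact~\ref{fct: tetrahedral codegree} gives $|U|\ge|N_r(xy)|>n/3$, so $|B|<2n/3$. Then for $uv\subseteq B$ the bound $\deg_H(uv,B)>7n/9-|U|=|B|-2n/9$ gives a ratio below $2/3$. The guaranteed bound $\deg_H(uv,B)=\deg_b(uv)>n/3$, valid only when $\phi(uv)=\{r,b\}$, exceeds $0.74|B|$ only if $|B|<0.45n$. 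That is the opposite of what you were aiming for, and you have no way to prove it.
\item You have no control over pairs $uv\subseteq B$ with $\phi(uv)=\{r\}$; that $r$ is spanning says nothing about pairs. Every $K^3_5$ is monochromatic, and a red one inside $B$ with all ten pairs only in $r$ is not excluded by (ii). So the $K^3_5$ you find might be red.
\item Theorem~\ref{Thm: codegree density of K35} is asymptotic, whereas the lemma is asserted for every $n$.
\end{enumerate}

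The paper's argument is local and reuses the configuration from (ii), namely $xyz,wyz$ red and $wxy$ blue. Extend $wxy$ to a $K^3_4$ on $\{u,w,x,y\}$, which is necessarily blue. Apply Proposition~\ref{Prp: 8 from 9 and 7 with one fixed}\ref{itm: i - 8 pairs common nbr} to the nine pairs of $\partial wyz\cup\partial xyz\cup\binom{\{u,w,x,y\}}{2}$. Fact~\ref{fct: common nbr of adjacent triangles}, applied to the red--blue pairs $(xyz,uxy)$, $(xyz,wxy)$, $(wyz,uwy)$, $(wyz,wxy)$, forces the omitted pair to be $yz$. Hence the six pairs of $\{u,w,x,y\}$ have a common neighbour $u'$, and $uu'wxy$ is a blue $K^3_5$.
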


\begin{proof}
By Lemma~\ref{Lma: two tight components}, $r$ and $b$ are the only tight components of $\T(H)$.
Fix $xy \in \binom{V(H)}{2}$ with $\phi(xy) = \{r, b\}$ and $z \in N_{r}(xy)$, so in particular we have $xyz \in E(H)$ and $\phi(xyz) = r$. Proposition~\ref{prp: weird configurations exist} ensures that such $x$ and $y$ exist, and also that there exists $w \in V(H)$ with $wxy, wyz \in E(H)$ and $\phi(wxy)=b$. If $\phi(wyz) = b$ then by  Proposition~\ref{z is red} (with the roles of $r$ and $b$ swapped, as are the roles of $z$ and $w$) we have $\phi(w)=\{b\}$, contradicting our assumption that $r$ is spanning. So we must have $\phi(wyz) = r$, so by Proposition~\ref{z is red} we have $\phi(z) = \{r\}$. This gives~\ref{itm: red spanning} immediately, and also~\ref{itm: many red vtx} because $z \in N_{r}(xy)$ was arbitrary.

For~\ref{itm: blue K5}, observe that since $\phi(wxy) = b$, by Fact~\ref{fct: tetrahedral codegree} we may choose $u \in N_b(xy) \cap N_b(wx) \cap N_b(wy)$, and then $uwxy$ forms a copy of $K^3_4$ in $H$ whose edges are all coloured $b$.
Note that~$|\partial wyz  \cup \partial xyz \cup \partial^2 uwxy|=9$ (where $\partial^2uwxy = \{uw, ux, uy, wx, wy, xy\}$).  So by Proposition~\ref{Prp: 8 from 9 and 7 with one fixed}\ref{itm: i - 8 pairs common nbr} there exists $P' \in  \partial wyz \cup \partial xyz \cup \partial^2 uwxy$ such that $$\bigcap_{P \in \left(\partial wyz \cup \partial xyz \cup \partial^2 uwxy\right) \setminus P'} N_H (P) \neq \emptyset.$$  
If $P' \neq yz$, then for some $$(e, e') \in \{(xyz, uxy), (xyz, wxy), (wyz, uwy), (wyz, wxy)\},$$ we have $\bigcap_{P \in \partial e \cup \partial e'} N_H(P) \neq \emptyset$, giving a contradiction to Fact~\ref{fct: common nbr of adjacent triangles} because $\phi(e) = r$ and $\phi(e') = b$. So we must have $P' = yz$. Then there exists a vertex $u' \in \bigcap_{P \in \partial^2uwxy}N_H(P)$, implying that $uu'wxy$ forms a copy of $K_5^3$ in $H$. Each set of four vertices in this copy forms a copy of $K^3_4$ in $H$ which shares at least three vertices with $uwxy$ and so lies in the same tight component $b$ of $\T(H)$, that is, $b$. It follows that all edges of~$uu'wxy$ have colour~$b$.  
\end{proof}

We are finally ready to prove Lemma~\ref{lma: tight connectivity at 7n/9}. 

\begin{proof}[{Proof of Lemma~\ref{lma: tight connectivity at 7n/9}}]
Suppose for a contradiction that $\T(H)$ is not tightly connected.
By Lemma~\ref{Lma: two tight components}, there are then exactly two components of~$\T(H)$, say~$r$ and~$b$, and by Fact~\ref{fct: colours are intersecting family} and~Lemma~\ref{lma: some vts see only red and blue K_5}\ref{itm: red spanning} we assume without loss of generality that $r$ is spanning and $b$ is not.
Let $U := \{z \in V(H) \colon  \phi(z)= \{r\}\}$; by combining 
Proposition~\ref{prp: weird configurations exist}, Fact~\ref{fct: tetrahedral codegree} and Lemma~\ref{lma: some vts see only red and blue K_5}\ref{itm: many red vtx} we then obtain $|U| > n/3$.
Furthermore, by Lemma~\ref{lma: some vts see only red and blue K_5}\ref{itm: blue K5}, there is a copy of $K_5^3$ in $H$ all of whose edges are coloured $b$; let $y_1, \dots, y_5$ be the vertices of this copy. 
For all distinct~$i, j \in [5]$ we have \begin{equation} \label{eqn: nbrs of pairs in U}
    |N_H (y_i y_j) \cap U| \geq \delta_2(H) + |U| - n > |U| - 2n/9 > 0,
\end{equation} 
so we may choose $u_{ij} \in N_H (y_i y_j) \cap U$.
By definition of~$U$ we then have $r = \phi(u_{ij}y_iy_j) \in \phi(y_iy_j)$, so $\phi(y_iy_j) = \{r, b\}$. 
By Lemma~\ref{lma: some vts see only red and blue K_5}\ref{itm: many red vtx} and Fact~\ref{fct: tetrahedral codegree} it follows that $|N_H(y_i y_j) \cap U| = |N_{r}(y_iy_j)| > n/3$. Together with~\eqref{eqn: nbrs of pairs in U} this yields \begin{align*}
    \sum_{ij \in \binom{[5]}{2}} |N_H(y_i y_j)\cap U| &> 6\left(|U| - \frac{2n}{9}\right) + \frac{4n}{3} = 6|U|.
\end{align*}
By averaging over~$U$ it follows that some $u \in U$ is in $N_H(y_i y_j)$ for at least seven pairs $ij \in \binom{[5]}{2}$. Applying Mantel's theorem we obtain a triangle $ijk$ in the graph on vertex set $[5]$ whose edges are these pairs. Then $uy_iy_jy_k$ is a copy of~$K_4^3$ in~$H$ with $\phi(y_iy_jy_k) = b$ and $\phi(uy_iy_j)=r$ (because $u \in U$), giving a contradiction.
\end{proof}

\section{Hypergraph Regularity} \label{sec: Hypergraph Regularity}
In this section, we formulate the notion of hypergraph regularity that we use, closely following the formulation from Allen, B\"ottcher, Cooley and Mycroft~\cite{ABCM}.
Recall that a \emph{hypergraph}~$\Hy$ is an ordered pair~$(V(\Hy), E(\Hy))$, where~$E(\Hy) \subseteq 2^{V(\Hy)}$. 
We identify the hypergraph~$\Hy$ with its edge set~$E(\Hy)$. 
A subgraph~$\Hy'$ of~$\Hy$ is a hypergraph with~$V(\Hy') \subseteq V(\Hy)$ and~$E(\Hy') \subseteq E(\Hy)$. 
It is \emph{spanning} if~$V(\Hy') = V(\Hy)$.
For~$U \subseteq V(\Hy)$, we define~$\Hy[U]$ to be the subgraph of~$\Hy$ with~$V(\Hy[U]) = U$ and~$E(\Hy[U]) = \{ e \in E(\Hy) \colon e \subseteq U\}$.

A hypergraph~$\Hy$ is called a \emph{complex} if~$\Hy$ is down-closed, that is if for an edge~$e \in \Hy$ and~$f \subseteq e$, then~$f \in \Hy$.
A \emph{$k$-complex} is a complex having only edges of size at most~$k$. 
We denote by~$\Hy^{(i)}$ the spanning subgraph of~$\Hy$ containing only the edges of size~$i$. Given a $k$-complex $\Hy$ and a set of vertices $S \subseteq V(\Hy)$, the subcomplex induced on~$S$ is obtained by taking all edges of~$\Hy$ induced on~$S$.
Let~$\mathcal{P}$ be a partition of~$V(\Hy)$ into vertex classes~$V_1, \dots, V_s$. Then we say that a set~$S \subseteq V(\Hy)$ is \emph{$\mathcal{P}$-partite} if~$\s{S \cap V_i} \leq 1$ for all~$i \in [s]$. 
For~$\mathcal{P}' = \{V_{i_1}, \dots, V_{i_r}\} \subseteq \mathcal{P}$, we define the subgraph of~$\Hy$ induced by~$\mathcal{P}'$, denoted by~$\Hy[\mathcal{P'}]$ or~$\Hy[V_{i_1}, \dots, V_{i_r}]$, to be the subgraph of~$\Hy[\bigcup \mathcal{P}']$ containing only the edges that are~$\mathcal{P}'$-partite. 
The hypergraph~$\Hy$ is said to be~$\mathcal{P}$-partite if all of its edges are~$\mathcal{P}$-partite. We say that~$\Hy$ is \emph{$s$-partite} if it is~$\mathcal{P}$-partite for some partition~$\mathcal{P}$ of~$V(\Hy)$ into~$s$ parts.
Let~$\Hy$ be a~$\mathcal{P}$-partite hypergraph. 
If~$X$ is a~$k$-set of vertex classes of~$\Hy$, then we write~$\Hy_X$ for the~$k$-partite subgraph of~$\Hy^{(k)}$ induced by~$\bigcup X$, whose vertex classes are the elements of~$X$. 
Moreover, we denote by~$\Hy_{X^<}$ the~$k$-partite hypergraph with~$V(\Hy_{X^<}) = \bigcup X$ and~$E(\Hy_{X^<}) = \bigcup_{X'\subsetneq X} \Hy_{X'}$.
In particular, if~$\Hy$ is a complex, then~$\Hy_{X^<}$ is a~$(k-1)$-complex because~$X$ is a set of size~$k$.

Let~$i \geq 2$ and let~$\mathcal{P}_i$ be a partition of a vertex set~$V$ into~$i$ parts. Let~$H_i$ and~$H_{i-1}$ be a~$\mathcal{P}_i$-partite~$i$-graph and a~$\mathcal{P}_i$-partite~$(i-1)$-graph on a common vertex set~$V$, respectively. We say that a~$\mathcal{P}_i$-partite~$i$-set in~$V$ is \emph{supported on}~$H_{i-1}$ if it induces a copy of the complete~$(i-1)$-graph~$K_i^{(i-1)}$ on~$i$ vertices in~$H_{i-1}$. We denote by~$K_i(H_{i-1})$ the~$\mathcal{P}_i$-partite~$i$-graph on~$V$ whose edges are all~$\mathcal{P}_i$-partite~$i$-sets contained in~$V$ which are supported on~$H_{i-1}$. Now we define the \emph{density of~$H_i$ with respect to~$H_{i-1}$} to be
\[
d(H_i \mid H_{i-1}) = \frac{\s{K_i(H_{i-1}) \cap H_i}}{\s{K_i(H_{i-1})}}
\]
if~$\s{K_i(H_{i-1})} > 0$ and~$d(H_i \mid H_{i-1}) = 0$ if~$\s{K_i(H_{i-1})} = 0$.
So~$d(H_i \mid H_{i-1})$ is the proportion of~$\mathcal{P}_i$-partite copies of~$K_i^{i-1}$ in~$H_{i-1}$ which are also edges of~$H_i$. More generally, if~$\mathbf{Q} = (Q_1, Q_2, \dots, Q_r)$ is a collection of~$r$ (not necessarily disjoint) subgraphs of~$H_{i-1}$, we define~$K_i(\mathbf{Q}) = \bigcup_{j=1}^r K_i(Q_j)$ and 
\[
d(H_i \mid \mathbf{Q}) = \frac{\s{K_i(\mathbf{Q}) \cap H_i}}{\s{K_i(\mathbf{Q})}}
\]
if~$\s{K_i(\mathbf{Q})} > 0$ and~$d(H_i \mid \mathbf{Q}) = 0$ if~$\s{K_i(\mathbf{Q})} = 0$.
We say that~$H_i$ is \emph{$(d_i, \eps, r)$-regular with respect to~$H_{i-1}$}, if we have~$d(H_i \mid \mathbf{Q}) = d_i \pm \eps$ for every~$r$-set~$\mathbf{Q}$ of subgraphs of~$H_{i-1}$ with~$\s{K_i(\mathbf{Q})} > \eps \s{K_i(H_{i-1})}$.
We say that~$H_i$ is \emph{$(\eps, r)$-regular with respect to~$H_{i-1}$} if there exists some~$d_i$ for which~$H_i$ is~$(d_i, \eps, r)$-regular with respect to~$H_{i-1}$. 
Finally, given an~$i$-graph~$G$ whose vertex set contains that of~$H_{i-1}$, we say that~$G$ is \emph{$(d_i, \eps, r)$-regular with respect to~$H_{i-1}$} if the~$i$-partite subgraph of~$G$ induced by the vertex classes of~$H_{i-1}$ is~$(d_i, \eps, r)$-regular with respect to~$H_{i-1}$. 
We refer to the density of this~$i$-partite subgraph of~$G$ with respect to~$H_{i-1}$ as the \emph{relative density of~$G$ with respect to~$H_{i-1}$}.

Now let~$s \geq k \geq 3$ and let~$\Hy$ be an~$s$-partite~$k$-complex on vertex classes~$V_1, \dots, V_s$. For any set~$A \subseteq [s]$, we write~$V_A$ for~$\bigcup_{i \in A} V_i$. Note that, if~$e \in \Hy^{(i)}$ for some~$2 \leq i \leq k$, then the vertices of~$e$ induce a copy of~$K_i^{i-1}$ in~$\Hy^{(i-1)}$. Therefore, for any set~$A \in \binom{[s]}{i}$, the density~$d(\Hy^{(i)}[V_A] \mid \Hy^{(i-1)}[V_A])$ is the proportion of `possible edges' of~$\Hy^{(i)}[V_A]$, which are indeed edges. We say that~$\Hy$ is \emph{$(d_k, \dots, d_2, \eps_k, \eps, r)$-regular} if
\begin{enumerate}[label=(\alph*)]
    \item for any~$2 \leq i \leq k-1$ and any~$A \in \binom{[s]}{i}$, the induced subgraph~$\Hy^{(i)}[V_A]$ is~$(d_i, \eps, 1)$-regular with respect to~$\Hy^{(i-1)}[V_A]$ and 
    \item for any~$A \in \binom{[s]}{k}$, the induced subgraph~$\Hy^{(k)}[V_A]$ is~$(d_k, \eps_k, r)$-regular with respect to~$\Hy^{(k-1)}[V_A]$.
\end{enumerate}
For~$\mathbf{d} = (d_k, \dots, d_2)$, we write~$(\mathbf{d}, \eps_k, \eps, r)$-regular to mean~$(d_k, \dots, d_2, \eps_k, \eps, r)$-regular.

A key property of regular complexes is that the restriction of such a complex to a large subset of its vertex set is also a regular complex, with the same relative densities at each level of the complex, although with somewhat weakened regularity properties. The next lemma states this property formally. 

\begin{lemma}[Regular Restriction Lemma~{\cite[Lemma 28]{ABCM}}]\label{lma: regular restriction} 
Suppose integers~$k, m$ and reals $\alpha,\eps,\eps_k$, and $d_2, \dots, d_k > 0$ are such that
$$1/m \ll \eps\ll \eps_k,d_2,\dots,d_{k-1}\quad\text{and}\quad  \eps_k\ll \alpha,1/k\,.$$ 
For any~$r,s\in\N$ and~$d_k>0$, set~$\mathbf{d} = (d_k, \dots, d_2)$ and let~$\cG$ be an~$s$-partite~$k$-complex whose vertex classes~$V_1, \dots, V_s$ each have size~$m$ and which is~$(\mathbf{d}, \eps_k, \eps, r)$-regular. Choose any~$V'_i \subseteq V_i$ with~$|V'_i| \geq \alpha m$ for each~$i \in [s]$. Then the induced subcomplex~$\cG[V'_1 \cup \dots \cup V'_s]$ is~$(\mathbf{d},\sqrt{\eps_k},\sqrt{\eps},r)$-regular. 
\end{lemma}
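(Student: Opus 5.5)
The Regular Restriction Lemma is cited from \cite[Lemma 28]{ABCM}, so in the paper it needs no proof. If I were to prove it, I would verify the two regularity conditions level by level, using that $\cG$ is a complex and that every $V'_i$ is a linear fraction of $V_i$.

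\textbf{Lower levels.} Fix $2 \le i \le k-1$ and $A \in \binom{[s]}{i}$. The restricted graph $\cG^{(i)}[V'_A]$ has underlying $(i-1)$-graph $\cG^{(i-1)}[V'_A]$. By the counting lemma for the regular $(i-1)$-complex (induction on $i$), $|K_i(\cG^{(i-1)}[V'_A])|$ is approximately $\prod_{j<i} d_j^{\binom{i}{j}} \prod_{a\in A}|V'_a|$. This is at least a positive fraction $c = c(\alpha, d_2,\dots,d_{i-1})$ of $|K_i(\cG^{(i-1)}[V_A])|$, because $\eps \ll d_2,\dots,d_{k-1}$ and $|V'_a| \ge \alpha m$.

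Now let $Q \subseteq \cG^{(i-1)}[V'_A]$ satisfy $|K_i(Q)| > \sqrt\eps\,|K_i(\cG^{(i-1)}[V'_A])|$. Then $Q$ is also a subgraph of $\cG^{(i-1)}[V_A]$, and $|K_i(Q)| > \sqrt\eps\, c\, |K_i(\cG^{(i-1)}[V_A])| > \eps |K_i(\cG^{(i-1)}[V_A])|$, since $\eps \ll c$. The original $(d_i,\eps,1)$-regularity therefore gives $d(\cG^{(i)} \mid Q) = d_i \pm \eps$, which lies within $d_i \pm \sqrt\eps$. Here $d(\cG^{(i)}\mid Q)$ is the same whether or not we restrict, because $K_i(Q)$ lies inside $V'_A$.

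\textbf{Top level.} The same argument works with $r$-sets $\mathbf{Q}$, using $(d_k,\eps_k,r)$-regularity. The required inequality is $\sqrt{\eps_k}\, c > \eps_k$, where $c$ is now the density of $K_k(\cG^{(k-1)}[V'_A])$ inside $K_k(\cG^{(k-1)}[V_A])$. This follows from $\eps_k \ll \alpha, 1/k$, provided $c$ depends only on $\alpha$ and $k$ and not on $d_2,\dots,d_{k-1}$.

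\textbf{Main obstacle.} The difficulty is that lower bound on $c$. The hierarchy puts $\eps_k \ll \alpha, 1/k$ but does not put $\eps_k$ below the $d_j$, so a crude count would give $c \approx \prod_a(|V'_a|/m)$ times counting-lemma errors. I would show that the relative proportion of $k$-cliques surviving the restriction is $(1\pm o(1))\prod_{a\in A}|V'_a|/m \ge \alpha^k/2$. The densities cancel in the ratio, because the counting lemma applies equally to the restricted and the unrestricted complex. The cited counting lemma of \cite{ABCM} handles exactly this, and I would invoke it rather than reprove it.
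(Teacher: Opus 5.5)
Your argument is correct. The paper gives no proof of its own and simply cites Lemma~28 of~\cite{ABCM}, and your sketch is the standard proof of that lemma. You induct on the levels, and you note that any admissible $\mathbf{Q}$ for the restriction is also admissible for $\cG$ once you know the restricted clique count is a constant fraction of the unrestricted one. You then use the counting lemma on both complexes, so that the $d_j$ cancel and the surviving proportion of $k$-cliques is at least about $\alpha^k/2$, which is exactly what the hierarchy $\eps_k \ll \alpha, 1/k$ requires. The only routine technicality is that the counting lemma you apply to the restricted complex must allow classes of unequal sizes (between $\alpha m$ and $m$), and it must rest on the inductively established $\sqrt{\eps}$-regularity of the lower levels.
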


We say that a~$(k-1)$-complex~$\J$ is \emph{$(t_0, t_1, \eps)$-equitable} if it has the following properties.
\begin{enumerate}[label = (\alph*)]
    \item~$\J$ is~$\mathcal{P}$-partite for some~$\mathcal{P}$ which partitions~$V(\J)$ into~$t$ parts, where~$t_0 \leq t \leq t_1$, of equal size. We refer to~$\mathcal{P}$ as the \emph{ground partition} of~$\J$ and to the parts of~$\mathcal{P}$ as the \emph{clusters} of~$\J$.
    \item There exists a \emph{density vector}~$\mathbf{d} = (d_{k-1}, \dots, d_2)$ such that, for each~$2 \leq i \leq k-1$, we have~$d_i \geq 1/t_1$ and~$1/d_i \in \N$ and~$\J$ is~$(\mathbf{d}, \eps, \eps, 1)$-regular.
\end{enumerate}
For any~$k$-set~$X$ of clusters of~$\J$, we denote by~$\hat{\J}_X$ the~$k$-partite~$(k-1)$-graph~$(\J_{X^<})^{(k-1)}$ and call~$\hat{\J}_X$ a \emph{polyad}. Given a~$(t_0, t_1, \eps)$-equitable~$(k-1)$-complex~$\J$ and a~$k$-graph~$G$ on~$V(\J)$, we say that~$G$ is \emph{$(\eps_k, r)$-regular with respect to a~$k$-set~$X$ of clusters of~$\J$} if there exists some~$d$ such that~$G$ is~$(d, \eps_k, r)$-regular with respect to the polyad~$\hat{\J}_X$.
Moreover, we write~$d_{G, \J}^*(X)$ for the relative density of~$G$ with respect to~$\hat{\J}_X$; we may drop either subscript if it is clear from context.

We can now give the crucial definition of a regular slice.
\begin{definition}[Regular slice]
Given~$\eps, \eps_k > 0, r, t_0, t_1 \in \N$, a~$k$-graph~$G$, a~$(k-1)$-complex~$\J$ on~$V(G)$, is a \emph{$(t_0, t_1, \eps, \eps_k,r)$-regular slice} for~$G$ if~$\J$ is~$(t_0, t_1, \eps)$-equitable and~$G$ is~$(\eps_k, r)$-regular with respect to all but at most~$\eps_k \binom{t}{k}$ of the~$k$-sets of clusters of~$\J$, where~$t$ is the number of clusters of~$\J$.
\end{definition}

If we specify the density vector~$\mathbf{d}$ and the number of clusters~$t$ of an equitable complex or a regular slice, then it is not necessary to specify~$t_0$ and~$t_1$ (since the only role of these is to bound~$\mathbf{d}$ and~$t$). In this situation we write that~$\J$ is~$(\cdot, \cdot, \eps)$-equitable, or is a~$(\cdot, \cdot, \eps, \eps_k, r)$-regular slice for~$G$.

Given a regular slice~$\J$ for a~$k$-graph~$G$, it will be important to know the
relative densities~$d^*(X)$ for~$k$-sets~$X$ of clusters of~$\J$. To keep track of these we use the following definition.

\begin{definition}[Weighted reduced~$k$-graph]
 Given a~$k$-graph~$G$ and a~$(t_0, t_1, \eps)$-equitable
~$(k-1)$-complex~$\J$ on~$V(G)$, we let~$R_\J(G)$ be the complete weighted~$k$-graph whose
 vertices are the clusters of~$\J$ and where each edge~$X$ is given weight~$d^*(X)$
 (in particular, the weight is in~$[0,1]$). When~$\J$ is clear from the context we often simply write~$R(G)$ instead of~$R_\J(G)$. 
\end{definition}

Given a set~$S \subseteq V(G)$ of size~$j$ for some~$j \in [k-1]$, the \emph{relative degree~$\overline{\deg}(S; G)$ of~$S$ with respect to~$G$} is defined as 
$$\overline{\deg}(S; G) : = \frac{|\{ e \in E(G) \colon S \subseteq e\}|}{\binom{| V(G)\setminus S |}{k-j}}.$$
Similarly, if~$G$ is a weighted~$k$-graph with weight function~$d^*$, then we define 
$$\overline{\deg}(S; G) = \frac{\sum_{e \in E(G) \colon S \subseteq e} d^*(e) }{\binom{|V(G)\setminus S|}{k-j}}.$$
Given a $k$-graph $G$ and distinct `root' vertices
$v_1,\ldots,v_\ell$ of~$G$ and a $k$-graph $H$ equipped with a set of distinct
`root' vertices $x_1,\ldots,x_\ell$, the \emph{number of labelled rooted copies of~$H$ in~$G$}, denoted by~$n_H(G;v_1,\ldots,v_\ell)$, is defined to be the number of injective maps from~$V(H)$ to~$V(G)$ embedding~$H$ in~$G$ and taking~$x_j$ to~$v_j$ for each~$j \in [\ell]$. Then the \emph{density of rooted copies of~$H$ in~$G$} is defined as
\[d_H(G;v_1,\ldots,v_\ell):=\frac{n_H(G;v_1,\ldots,v_\ell)}{\tbinom{|V(G)|-\ell}{|V(H)|-\ell}\cdot\big(|V(H)|-\ell\big)!}\,.\]
This density has a natural probabilistic interpretation: choose uniformly at random an injective map $\psi : V(H) \to V(G)$ such that $\psi(x_j) = v_j$ for each~$j \in [\ell]$. Then $d_H(G;v_1, \dots, v_\ell)$ is the probability that $\psi$ embeds $H$ into~$G$. 
We now define $\Hskel$ to be the $(k-1)$-complex on $V(H) - \ell$ vertices,  obtained from the complex $\Hy$ generated by the down-closure of~$H$ by deleting the vertices $x_1,\ldots,x_\ell$ (and all edges containing them) and deleting all the edges of size $k$. Given a $(t_0, t_1, \eps)$-equitable $(k-1)$-complex~$\J$ on~$V(G)$, the \emph{number of rooted copies of~$H$ supported by~$\J$}, written $n_H(G;v_1,\ldots,v_\ell,\J)$, is defined as the number of labelled rooted copies of~$H$ in~$G$ such that every vertex of~$\Hskel$ lies in a distinct cluster of~$\J$ and the image of~$\Hskel$ is in~$\J$ (note that we do \emph{not} require the edges involving
$v_1,\ldots,v_\ell$ to be contained in or supported by $\J$ and
typically they will not be so). We also define $n'_{\Hskel}(\J)$ to be the number of labelled copies of~$\Hskel$ in~$\J$ with each vertex of~$\Hskel$ embedded in a distinct cluster of~$\J$. Then the
\emph{density $d_H(G;v_1,\ldots,v_\ell,\J)$ of rooted copies of~$H$ in~$G$ supported by~$\J$} is defined as
$$d_H(G;v_1,\ldots,v_\ell,\J):= \frac{n_H(G;v_1,\ldots,v_\ell,\J)}{n'_{\Hskel}(\J)}.$$
Again there is a natural probabilistic interpretation: let $\psi : V(\Hskel) \to V(G)$ be an injective map chosen uniformly at random. Extend $\psi$ to a map $\psi' : V(H) \to V(G)$ by taking $\psi'(x_i) = v_i$ for each~$i \in [\ell]$. Then $d_H(G;v_1,\ldots,v_\ell,\J)$ is the conditional probability that~$\psi'$ embeds $H$ in~$G$, given that~$\psi$ embeds $\Hskel$ in~$\J$ with every vertex of~$\Hskel$ embedded in a different cluster of~$\J$.

We now state the statement of the Regular Slice Lemma that we need, that is a straightforward simplification of~\cite[Lemma 10]{ABCM}.

\begin{lemma}[Regular Slice Lemma {\cite[Lemma 10]{ABCM}}]
\label{lem: regular slice}
Let~$k \geq 3$. For all positive integers~$t_0$, positive~$\eps_k$ and all functions~$r: \N \to \N$ and~$\eps: \N \to (0, 1]$, there are integers~$t_1$ and~$n_2$ such that the following holds for all~$n \ge n_2$ which are divisible by~$t_1!$. Let~$G$ be a~$k$-graph whose vertex set~$V$ has size~$n$. Then there exists a~$(k-1)$-complex~$\J$ on~$V$ which is a~$(t_0, t_1, \eps(t_1), \eps_k, r(t_1))$-regular slice for~$G$ such that
\begin{enumerate}[label={\rm(\roman*)}]
    \item  for each~$i \in [k-1]$, each set~$Y$ of~$i$ clusters of~$\J$, we have~$\overline{\deg}(Y; R(G)) = \overline{\deg}(Y; G) \pm \eps_k$;
    \item \label{itm: skeleton} for  each~$1\le\ell\le 1/\eps_k$,  each $k$-graph $H$ equipped with a set of distinct root vertices  $x_1,\ldots,x_\ell$ such that $|V(H)|\le 1/\eps_k$ and any distinct vertices  $v_1,\ldots,v_\ell$ in~$V$, we have
  \[\big|d_H(G;v_1,\ldots,v_\ell,\J)-d_H(G;v_1,\ldots,v_\ell) \big|<\eps_k
  \,.\]
\end{enumerate}
 
\end{lemma}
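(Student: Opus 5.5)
The statement is a specialisation of \cite[Lemma 10]{ABCM}, so I will not re-prove regularity from scratch. Instead I will obtain it by invoking that lemma with suitable parameters and discarding the conclusions we do not need. Roughly, \cite[Lemma 10]{ABCM} takes as input $t_0$, $\eps_k$, functions $r$ and $\eps$, and a bounded family of $k$-graphs $G_1,\dots,G_s$ on a common vertex set $V$ with $|V|=n$ divisible by $t_1!$. It outputs a $(k-1)$-complex $\J$ which is simultaneously a $(t_0,t_1,\eps(t_1),\eps_k,r(t_1))$-regular slice for every $G_j$. It additionally guarantees that $\J$ approximately preserves relative degrees of all sets of at most $k-1$ clusters in each $R_\J(G_j)$, and that it approximately preserves rooted densities of small $k$-graphs.

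First I would set $s=1$ and $G_1=G$, and keep the same $t_0$, $\eps_k$, $r$, $\eps$. The output constants $t_1$ and $n_2$ are then those given by \cite[Lemma 10]{ABCM}, and the divisibility hypothesis $t_1!\mid n$ carries over verbatim. The regular-slice conclusion for $G_1$ is exactly the regular-slice conclusion required here.

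Second, property (i) is read off from the degree-preservation clause of \cite[Lemma 10]{ABCM} applied to $G_1$. Here I need to check only that the relative degree $\overline{\deg}(Y;R(G))$ in the weighted reduced $k$-graph, as defined above via the weights $d^*(X)$, coincides with the quantity controlled there. This is a matter of unwinding definitions, since both are normalised sums of relative densities over $k$-sets of clusters containing $Y$.

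Third, property (ii) corresponds to the rooted-copies clause of \cite[Lemma 10]{ABCM}. That clause bounds $|d_H(G;v_1,\dots,v_\ell,\J)-d_H(G;v_1,\dots,v_\ell)|$ for all rooted $H$ with at most $1/\eps_k$ vertices and at most $1/\eps_k$ roots, where $\Hskel$ is embedded in distinct clusters and supported by $\J$. I would confirm that the definitions of $\Hskel$, $n'_{\Hskel}(\J)$ and $d_H(\cdot\,;\cdot,\J)$ given above match those in \cite{ABCM}. Then any additional conclusions of \cite[Lemma 10]{ABCM}, such as those concerning further graphs $G_j$ or vertex-degree preservation, are simply dropped.

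The main (minor) obstacle is bookkeeping. I must make sure the quantifier order matches, namely that $t_1$ and $n_2$ depend only on $t_0$, $\eps_k$, $r$, $\eps$ and $k$, and that the normalisation conventions for $\overline{\deg}$ in the weighted setting and for rooted densities agree with \cite{ABCM}. If a convention differed by lower-order terms, for instance $\binom{|V|-|S|}{k-j}$ versus $\binom{|V|}{k-j}$, I would absorb the discrepancy by applying \cite[Lemma 10]{ABCM} with $\eps_k/2$ in place of $\eps_k$.
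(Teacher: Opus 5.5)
Your proposal is correct and takes the same route as the paper, which gives no independent proof and simply records the statement as a direct specialisation of \cite[Lemma 10]{ABCM}: take a single $k$-graph $G_1=G$ and the trivial initial partition $\{V\}$ of $V$ (an input of that lemma you did not mention, but harmless), then discard the unused conclusions. The definitional checks you list are all that is required, and since the definitions here closely follow \cite{ABCM}, you should not need to replace $\eps_k$ by $\eps_k/2$.
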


Given a regular slice~$\J$ for a~$k$-graph~$G$, in order to work with~$k$-tuples of regular and dense clusters, we define the~$d$-reduced~$k$-graph~$\R_d^{\J}(G)$ as follows.

\begin{definition}[The~$d$-reduced~$k$-graph]
Let~$k \geq 3$.
Let~$G$ be a~$k$-graph and suppose $\J$ is a~$(t_0, t_1, \eps, \eps_k, r)$-regular slice for~$G$ with~$t$ clusters where~$t_0 \le t\le t_1$. Then, for~$d >0$, we define the \emph{$d$-reduced~$k$-graph~$\R_d^{\J}(G)$} to be the~$k$-graph whose vertices are the set~$[t]$ (corresponding to the~$t$ clusters of~$\J$) and whose edges are~$k$-sets in~$\binom{[t]}{k}$ such that~$e \in E\left(\R_d^{\J}(G)\right)$ if and only if the corresponding~$k$-set~$X$ of clusters of~$\J$ is such that~$G$ is~$(\eps_k, r)$-regular with respect to~$X$ and~$d^*(X) \geq d$.
\end{definition}

We write $R_d(G)$ for~$R_d^\J(G)$ when $\J$ is clear from context. The next lemma states that the codegree conditions are also preserved by~$R_d(G)$, so we can work with this structure.

\begin{lemma}[{\cite[Lemma 12]{ABCM}}] \label{Lma: degree conditions in d reduced k graph}
    Let~$G$ be a~$k$-graph and let~$\J$ be a~$(\cdot, \cdot, \eps, \eps_k, r)$-regular slice for~$G$ with~$t$ clusters. For any set~$S$ of at most~$k-1$ vertices of~$R_d(G)$, let~$S_R$ be the set of the corresponding at most~$k-1$ clusters of~$\J$. Then we have 

   ~$${\overline{\deg}}(S; R_d(G)) \ge \overline{\deg}(S_R; R(G)) - d - \zeta(S_R),$$ where~$\zeta(S_R)$ is the proportion of~$k$-sets of clusters~$T$ with~$S_R \subseteq T$ which are not~$(\eps_k, r)$-regular with respect to~$G$.
\end{lemma}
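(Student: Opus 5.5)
This is a counting argument that compares the degree of $S$ in the weighted reduced graph $R(G)$ with its degree in $R_d(G)$. Let $S$ be a set of $j \le k-1$ vertices of $R_d(G)$, with corresponding cluster set $S_R$. Both relative degrees are normalised by the same quantity $N := \binom{t-j}{k-j}$, namely the number of $k$-sets $T$ of clusters with $S_R \subseteq T$. So it suffices to compare, for each such $T$, the contribution of $T$ to $\sum_{T \supseteq S_R} d^*(T)$ with its contribution to the number of edges of $R_d(G)$ containing $S$.

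Split the $k$-sets $T \supseteq S_R$ into three classes.
\begin{enumerate}[label={\rm(\alph*)}]
\item $T$ is $(\eps_k,r)$-regular with respect to $G$ and $d^*(T) \ge d$. Then $T$ is an edge of $R_d(G)$ and contributes exactly $1$ to the count, while its weight $d^*(T)$ is at most $1$. So its contribution to $R_d(G)$ is at least its contribution to $R(G)$.
\item $T$ is regular but $d^*(T) < d$. Then $T$ contributes $0$ to $R_d(G)$ and less than $d$ to the weighted sum.
\item $T$ is not $(\eps_k,r)$-regular. Then $T$ contributes $0$ to $R_d(G)$ and at most $1$ to the weighted sum. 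By definition, the number of such $T$ is $\zeta(S_R)\,N$.
\end{enumerate}

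Summing over the three classes gives
\[
\sum_{T \supseteq S_R} d^*(T) \le \deg_{R_d(G)}(S) + d\,N + \zeta(S_R)\,N.
\]
Dividing by $N$ yields $\overline{\deg}(S_R;R(G)) \le \overline{\deg}(S;R_d(G)) + d + \zeta(S_R)$, which rearranges to the claim.

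The only point needing care is bookkeeping: the vertex set of $R_d(G)$ is $[t]$, identified with the clusters. Under this identification $k$-sets containing $S$ correspond bijectively to $k$-sets of clusters containing $S_R$, so the two normalisations coincide. There is no real obstacle here; the lemma is essentially immediate once the three classes are separated and the bound $d^*(T) \le 1$ is used.
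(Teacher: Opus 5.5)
Your proof is correct. The paper does not prove this lemma itself but quotes it from \cite{ABCM}; your argument is the natural direct averaging proof. You split the $k$-sets of clusters containing $S_R$ into edges of $R_d(G)$, regular sets of weight below $d$, and irregular sets, then use $d^*\le 1$ and the common normalisation $\binom{t-|S|}{k-|S|}$.
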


Given a copy of some subgraph~$H' \subseteq H$ in~$\cG^{(k)}$, how many ways are there to extend~$H'$ to a copy of~$H$ in~$\cG^{(k)}$? The next lemma gives a lower bound on this number for almost all copies of~$H'$ in~$\cG^{(k)}$. To state this precisely, we define the following. 

Let~$\cG$ be an~$s$-partite~$k$-complex whose vertex classes~$V_1, \dots ,V_s$
are each of size~$m$ and let~$\Hy$ be an~$s$-partite~$k$-complex whose vertex
classes~$X_1, \dots, X_s$ each have size at most~$m$. We say that an embedding of~$\Hy$ in~$\cG$ is
\emph{partition-respecting} if for any~$i \in [s]$ the vertices of~$X_i$ are
embedded within~$V_i$. On the other hand, we say $\cG$ respects the partition of~$\Hy$ if whenever $\cG$ contains an $i$-edge with vertices in~$V_{j_1}, \dots, V_{j_i}$, then $\Hy$ contains an $i$-edge with vertices in~$X_{j_1}, \dots, X_{j_i}$. We denote the set of labelled partition-respecting copies
of~$\Hy$ in~$\cG$ by~$\Hy_\cG$. The Extension Lemma
states that if~$\Hy'$ is an induced subcomplex of~$\Hy$ and~$\cG$ is regular with~$\cG^{(k)}$ reasonably dense, then almost all partition-respecting copies of
$\Hy'$ in~$\cG$ can be extended to a large number of copies of~$\Hy$ in
$\cG$. 

\begin{lemma}[Extension Lemma {\cite[Lemma 29]{ABCM}}]\label{lma :ext}
Let~$k,s,r,b,b',m$ be positive integers, such that $b'<b$ and suppose~$c,
\beta,d_2,\ldots,d_k,\eps,\eps_k$ be positive constants such that~$1/d_i
\in\N$ for any~$2 \leq i \leq k-1$ and 
\[ 1/m\ll 1/r,
\eps \ll c \ll \eps_k,d_2,\ldots,d_{k-1}\quad\text{and}\quad \eps_k \ll
\beta,d_k, 1/s,1/b\,.\]
Suppose that~$\Hy$ is an~$s$-partite~$k$-complex on~$b$ vertices
with vertex classes~$X_1, \dots, X_s$ and let~$\Hy'$ be an induced subcomplex
of~$\Hy$ on~$b'$ vertices.
Suppose that~$\cG$ is an~$s$-partite~$k$-complex with vertex classes~$V_1,\ldots,V_s$, all of size~$m$, such that~$\bigcup_{0 \le i \le k-1} \cG^{(i)}$ is~$(\cdot, \cdot, \eps)$-equitable with density vector
$(d_{k-1},\ldots,d_2)$.  Suppose further that for each~$e\in\Hy^{(k)}$ with index~$A \in \binom{[s]}{k}$, the~$k$-graph~$\cG^{(k)}[V_A]$ is~$(d,\eps_k,r)$-regular with respect to~$\cG^{(k-1)}[V_A]$ for some~$d\ge d_k$. Then
all but at most~$\beta |\Hy'_\cG|$ labelled partition-respecting copies of~$\Hy'$
in~$\cG$ extend to at least~$cm^{b-b'}$ labelled partition-respecting
copies of~$\Hy$ in~$\cG$. \qed
\end{lemma}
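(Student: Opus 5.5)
This is the Extension Lemma of Allen, B\"ottcher, Cooley and Mycroft, quoted verbatim as \cite[Lemma 29]{ABCM}, and here I would simply invoke it rather than reprove it. If a self-contained argument were wanted, I would use a counting argument built on the hypergraph counting lemma for regular complexes, together with a second-moment (Cauchy--Schwarz) step.

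\textbf{Step 1: count copies of both complexes.} Apply the counting lemma to $\cG$, which is equitable below level $k$ and $(d,\eps_k,r)$-regular on every index $A$ carrying an edge of $\Hy^{(k)}$. This gives
\[
|\Hy_\cG| = (1\pm \beta^2)\prod_{i=2}^{k} d_i^{e_i(\Hy)}\, m^{b},
\]
where for level $k$ one uses the actual relative densities $d\ge d_k$ of the relevant polyads. The same formula holds for $\Hy'$ with $b'$ in place of $b$. The ratio of the two counts is therefore about $m^{b-b'}$ times a product of densities, and this product is bounded below by a constant depending only on $d_2,\dots,d_k$ and $b$. Choosing $c$ below half of this constant, the average number of extensions of a copy of $\Hy'$ is at least $2cm^{b-b'}$.

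\textbf{Step 2: show the extension counts concentrate.} For each copy $\psi$ of $\Hy'$, let $N(\psi)$ be the number of copies of $\Hy$ extending it. The key observation is that $\sum_\psi N(\psi)^2$ counts copies of the complex $\Hy\cup_{\Hy'}\Hy$, obtained by gluing two copies of $\Hy$ along $\Hy'$. This glued complex is again $s$-partite, with each new vertex lying in its original class. Apply the counting lemma to it as well, inside a suitable blow-up of $\cG$, or after splitting clusters so that the copies are partition-respecting. It gives $\sum_\psi N(\psi)^2 \approx |\Hy'_\cG|\,(\text{mean})^2$. The variance is then $o(1)$ times the squared mean, so by Chebyshev all but a $\beta$-fraction of the copies $\psi$ have $N(\psi)\ge cm^{b-b'}$.

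\textbf{Main obstacle.} The hard part is the counting lemma itself in the $r$-regular (``strong'') setting. The level-$k$ regularity $\eps_k$ is much larger than the lower-level regularity $\eps$ and the densities $d_2,\dots,d_{k-1}$, so counting requires the dense counting lemma of Cooley--Fountoulakis--K\"uhn--Osthus or R\"odl--Schacht. That machinery is exactly why the paper cites \cite{ABCM} rather than reproving it. Given the counting lemma as a black box, Steps 1 and 2 are routine, so I would treat the statement as imported and apply it directly.
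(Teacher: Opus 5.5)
Your proposal matches the paper: the Extension Lemma is imported verbatim from \cite[Lemma 29]{ABCM} with no proof beyond the citation, so invoking it is exactly what the paper does. Your optional sketch is the standard way to derive such extension lemmas from the dense counting lemma, and it is sound in outline: use the counting lemma on $\Hy$ and $\Hy'$ for the mean number of extensions, then on two copies of $\Hy$ glued along $\Hy'$ for the second moment, then apply Chebyshev. Two small corrections: no blow-up or cluster-splitting is needed, since the counting lemma already handles partition-respecting copies of $s$-partite complexes with several vertices per class; and pairs of extensions that overlap outside $\Hy'$ contribute only $O(m^{2b-b'-1})$ to $\sum_\psi N(\psi)^2$, which is negligible.
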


We will use the following result due to K\"uhn, Mycroft and Osthus~\cite{Kuhn-Mycroft-Osthus}. The \emph{maximum vertex degree} of a complex $\cG$ is the maximum degree of a vertex of~$\cG$. 

\begin{lemma}[Embedding Lemma~{\cite[Lemma 4.5]{Kuhn-Mycroft-Osthus}}]\label{lma: Embedding Lemma}
Let~$\Delta, k, s, r, m_0$ be positive integers and let $c, \eps, \eps_k$ and $d_2, \dots, d_k$ be positive constants such that~$1/d_i \in \N$ for all~$i < k$,

$$1/m_0 \ll 1/r, \eps \ll \min\{\eps_k, d_2, \dots, d_{k-1}\} \le \eps_k \ll d_k, 1/\Delta, 1/s$$ and~$$c \ll d_2, \dots, d_k.$$
Then the following holds for all integers~$m \ge m_0$. Suppose that~$\cG$ is an~$s$-partite~$k$-complex of maximum vertex degree at most~$\Delta$ with vertex classes~$X_1, \dots, X_s$ such that~$|X_i| \le cm$ for each~$i \in [s]$. Suppose also that~$\Hy$ is a~$(\mathbf{d}, \eps_k, \eps, r)$-regular~$s$-partite~$k$-complex with vertex classes~$V_1, \dots, V_s$ all of size~$m$, respecting the partition of~$\cG$. Then~$\Hy$ contains a labelled partition-respecting copy of~$\cG$.    
\end{lemma}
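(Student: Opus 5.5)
The plan is to embed the vertices of $\cG$ one at a time in a fixed order, and to keep every partial embedding "typical" so that each later vertex still has many admissible images. Note the roles of the letters here: $\cG$ is the bounded-degree complex being embedded, with small classes $X_i$, and $\Hy$ is the regular host complex with classes $V_i$ of size $m$.

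First we fix the order and the candidate sets. Order $V(\cG)$ as $y_1,\dots,y_N$ with $N\le scm$. When $y_j$ is reached, the vertices of $\cG$ whose images constrain it are its already-embedded neighbours in the $2$-skeleton of $\cG$. There are at most $\Delta$ of them, because the maximum vertex degree is at most $\Delta$.

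Next we state the invariant. Let $\phi$ be the partial embedding after $y_1,\dots,y_{j-1}$ have been placed. For every unembedded $y$ and every bounded-size subcomplex $\mathcal{F}$ of $\cG$ around $y$ (say, of radius $2$ in the skeleton), the restriction of $\phi$ to the embedded part of $\mathcal{F}$ extends to at least $c' m^{|\mathcal{F}\setminus \mathrm{emb}|}$ partition-respecting copies of $\mathcal{F}$ in $\Hy$. Here $c'$ depends only on the densities $d_i$ and $\Delta$, and $c\ll c'$.

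The inductive step then runs as follows.
\begin{enumerate}[label={\rm(\arabic*)}]
\item The candidate set $C(y_j)$ consists of the vertices $v\in V_i\setminus\phi(\{y_1,\dots,y_{j-1}\})$, where $y_j\in X_i$, such that mapping $y_j\mapsto v$ keeps every edge of $\cG$ inside $\Hy$ together with the embedded neighbours. The invariant gives $|C(y_j)|\ge c'm$ before used vertices are removed, and at most $cm\ll c'm$ vertices of $V_i$ are used.
\item To choose $v\in C(y_j)$ preserving the invariant for the neighbours of $y_j$, we apply the Extension Lemma (Lemma~\ref{lma :ext}). For each relevant bounded subcomplex $\mathcal{F}$ containing $y_j$, we take $\Hy'$ to be its embedded part plus $y_j$. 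The lemma then says that all but a $\beta$-fraction of copies of $\Hy'$ extend to $c'm^{b-b'}$ copies of $\mathcal{F}$.
\item Since $y_j$ lies in only $O_\Delta(1)$ such $\mathcal{F}$ and $\beta\ll c'$, all but a small fraction of $C(y_j)$ are good for all of them simultaneously. We pick any such $v$ not yet used.
\end{enumerate}
To deal with used vertices, we apply the Regular Restriction Lemma (Lemma~\ref{lma: regular restriction}) to $\Hy$ restricted to $V_i\setminus\mathrm{Im}(\phi)$, which still has size at least $(1-c)m$. This keeps $\Hy$ regular with parameters $(\mathbf d,\sqrt{\eps_k},\sqrt{\eps},r)$, so the counting statements remain valid on the unused vertices.

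The main obstacle is step (2): the Extension Lemma controls only \emph{most} copies of $\Hy'$, whereas the partial embedding is one specific copy. The invariant must therefore be stated relative to the current partial embedding, in the form "this partial copy is typical for all bounded extensions". We then need to check that typicality of the current copy, combined with the bound that at most a $\beta$-proportion of copies are atypical, implies that typical choices of $v$ exist. I would handle this by counting: the number of atypical extensions of the current typical copy is at most $\beta m^{b-b'}\cdot(\text{normalisation})$ and cannot fill all $c'm$ candidates. An alternative is to pass to the link complex of the current embedding, which is itself regular by the counting lemma. The remaining steps, namely disjointness and the bounded number of constraints, are routine given $c\ll d_i$ and $\Delta$ fixed.
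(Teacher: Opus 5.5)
The paper does not prove this lemma; it quotes it from~\cite{Kuhn-Mycroft-Osthus}. Your argument therefore has to stand on its own, and it has a genuine gap at exactly the step you flag. The vertex-by-vertex framework is reasonable, but Lemma~\ref{lma :ext} cannot drive the induction, for two reasons.

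\textbf{It is a global averaging statement.} The exceptional copies of $\Hy'$ (in that lemma's notation) may make up a $\beta$-fraction of all copies, which is of order $\beta m^{b'}$. The copies of $\Hy'$ that agree with your current partial embedding differ only in the image of $y_j$, so there are at most $m$ of them. For $b'\ge 2$ every one of them may be exceptional, and then the lemma tells you nothing about the candidates you actually have.

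\textbf{It is one-sided, with its constants in the wrong order.} There $c\ll\eps_k\ll\beta$, so the condition $\beta\ll c'$ that you need in step (3) cannot be arranged. Your proposed repair by counting fails for the same reason. Knowing that the current copy has at least $c'm^{u}$ extensions, with $u$ the number of unembedded vertices of $\mathcal{F}$, gives no upper bound on how many of its one-vertex extensions are bad. A Markov-type localisation needs an extra layer of typicality, and even then it only bounds the bad extensions by roughly $\sqrt{\beta}$ times the typical number of extensions, which exceeds the guaranteed $c'm$. Relatedly, any $c'$ supplied by Lemma~\ref{lma :ext} satisfies $c'\ll\eps_k$. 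The statement only assumes $c\ll d_2,\dots,d_k$, so you cannot assume $c\ll c'$ either.

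The underlying problem is that an invariant consisting only of lower bounds on extension counts is not hereditary. For each local configuration through $y_j$ separately it gives some positive proportion of good candidates, but these sets need not intersect. Your step (3) asserts that all but a small fraction of $C(y_j)$ are good for every configuration at once, and that requires concentration, i.e.\ two-sided control.

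What has to be carried forward is regularity of the current candidate structure. You need that for all but a small fraction of $v\in C(y_j)$, the complex on the updated candidate sets (cut down by the link of $v$) is again regular, with weaker but controlled parameters. Each candidate set is cut down at most $\Delta$ times, so the loss in parameters is bounded. Regularity together with counting then keeps the candidate sets of size comparable to $m$ times a product of the densities, which is what absorbs the $cm$ used vertices.

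This is exactly what your \emph{alternative} asserts, but it is the real content of the lemma rather than a routine step. A counting lemma controls the number of copies of fixed configurations, not regularity of a link with respect to all $r$-sets of subgraphs. Lemma~\ref{lma: regular restriction} only handles deleting used vertices; it does not cover passing to neighbourhoods of embedded vertices.
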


We now show that if~$H$ is a~$3$-graph with~$\delta_2(H) \ge (\gamma + \alpha)n$, then the reduced graph `almost' inherits the codegree condition. We borrow the following terminology from Han, Lo and Sanhueza-Matamala~\cite{strongdense} to describe this. For $0 \le \mu, \theta \le 1$ and a $k$-graph $H$ on $n$ vertices, $H$ is said to be \emph{$(\mu, \theta)$-dense} if there exists $\mathscr{S} \subseteq \binom{V(H)}{k-1}$ of size at most $\theta \binom{n}{k-1}$ such that for all $S \in \binom{V(H)}{k-1}\setminus\mathscr{S}$ we have $\deg_H(S) \ge \mu(n-k+1)$.

\begin{proposition} \label{Prop: almost codegree in the reduced graph}
    Let~$1/t \ll d_3, \eps_3 \ll \alpha, \beta$ and $0\le \gamma\le 1$. Let~$H$ be a~$3$-graph on~$n$ vertices with~$\delta_2(H) \ge (\gamma+\alpha)n$ and~$\J$ be a~$(\cdot, \cdot, \eps, \eps_3, r)$-regular slice for~$H$ with~$t$ clusters such that for any set $Y$ of~$k-1$ clusters of~$\J$, we have $\overline{\deg}(Y; R(H)) = \overline{\deg}(Y; H) \pm \eps_3$. Let $R = R_{d_3}^{\J}(H)$. Then at least~$(1-\beta)\binom{t}{2}$ pairs of vertices in~$R$ have codegree at least~$(\gamma + \alpha/2)t$, i.e. $R$ is $(\gamma+\alpha/2, \beta)$-dense.
    
\end{proposition}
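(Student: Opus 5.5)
The plan is to combine the degree-transfer property of the regular slice with Lemma~\ref{Lma: degree conditions in d reduced k graph}, and then use an averaging argument to bound the number of pairs of clusters lying in many irregular triples.

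\textbf{Step 1: the full weighted reduced graph inherits the codegree.} Let $Y = \{V_i, V_j\}$ be any pair of clusters. By hypothesis,
\[
\overline{\deg}(Y; R(H)) \ge \overline{\deg}(Y; H) - \eps_3 .
\]
Here $\overline{\deg}(Y;H)$ is the relative degree of the $2$-set~$Y$, viewed as a set of clusters. I read it the way \cite{ABCM} uses it: as the average over pairs $xy$ with $x \in V_i$ and $y \in V_j$ of the normalised codegree $\deg_H(xy)/(n-2)$, which is at least $\gamma+\alpha$. With that reading, every pair of clusters has
\[
\overline{\deg}(Y;R(H)) \ge \gamma+\alpha-\eps_3 .
\]

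\textbf{Step 2: pass to the $d_3$-reduced graph.} Apply Lemma~\ref{Lma: degree conditions in d reduced k graph} with $S = Y$. This gives
\[
\overline{\deg}(Y;R) \ge \gamma+\alpha-\eps_3-d_3-\zeta(Y),
\]
where $\zeta(Y)$ is the proportion of triples of clusters containing~$Y$ that are not $(\eps_3,r)$-regular.

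\textbf{Step 3: control $\zeta(Y)$ by averaging.} This is the only genuine obstacle. Since $\J$ is a regular slice, at most $\eps_3\binom{t}{3}$ triples of clusters are irregular. Each triple contains three pairs, so
\[
\sum_Y \zeta(Y)(t-2) \le 3\eps_3\binom{t}{3},
\]
and hence $\sum_Y \zeta(Y) \le \eps_3\binom{t}{2}$. By Markov's inequality (Lemma~\ref{lma: Markov}), the number of pairs $Y$ with $\zeta(Y) > \alpha/8$ is at most $(8\eps_3/\alpha)\binom{t}{2} \le \beta\binom{t}{2}$. This uses $\eps_3 \ll \alpha,\beta$.

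\textbf{Step 4: conclude.} Every other pair satisfies
\[
\overline{\deg}(Y;R) \ge \gamma+\alpha-\eps_3-d_3-\alpha/8 \ge \gamma + 3\alpha/4 .
\]
Multiplying by $t-2$ gives $\deg_R(Y) \ge (\gamma+3\alpha/4)(t-2) \ge (\gamma+\alpha/2)t$, using $1/t \ll \alpha$. These pairs are exactly the ones outside the exceptional set~$\mathscr{S}$, so $R$ is $(\gamma+\alpha/2,\beta)$-dense. The same bound also covers the $(t-2)$ normalisation in that definition.

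The one point needing care is the interpretation in Step 1. If $\overline{\deg}(Y;H)$ is instead read literally, as the $H$-degree of the $2$-set~$Y$, then I would unwind the definition of $\overline{\deg}(Y;R(H))$. It is $d^*$ averaged over triples of clusters containing $Y$, and $d^*(X)$ approximates the proportion of edges of $H$ among the triangles of the polyad $\hat\J_X$. Averaged over $X \supseteq Y$, the relevant quantity is a weighted average of $\deg_H(xy)/n$ over pairs $xy$ supported by $\J$ across $V_i$ and $V_j$, which is again at least $\gamma+\alpha-o(1)$. The remaining steps go through unchanged.
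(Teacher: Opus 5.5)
Your proposal is correct and follows essentially the same route as the paper: transfer the codegree to $R(H)$ via the slice, apply Lemma~\ref{Lma: degree conditions in d reduced k graph}, and discard the few pairs of clusters lying in many irregular triples by a Markov-type count. The only difference is the cut-off: the paper calls a pair good if it lies in fewer than $2\sqrt{\eps_3}\,t$ irregular triples, so at most $\tfrac12\sqrt{\eps_3}\binom{t}{2}$ pairs are bad, whereas you use $\zeta(Y)\le\alpha/8$; either works under $\eps_3\ll\alpha,\beta$.
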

\begin{proof}

Let~$V_1, V_2 \in V(R(H))$, so~$V_1$ and~$V_2$ are clusters of~$\J$. We have~$$\overline{\deg}(V_1V_2; R(H)) = \overline{\deg}(V_1V_2; H) \pm \eps_3 \ge \gamma+ 3\alpha/4.$$ Since $\J$ is a regular slice,~$H$ is $(\eps_3, r)$-regular with respect to all but at most ~$\eps_3 \binom{t}{3}$ triples of clusters of~$\J$. So at most~$\frac{1}{2}\sqrt{\eps_3}\binom{t}{2}$ pairs of clusters of~$\J$ lie in at least~$2\sqrt{\eps_3}t$ of irregular triples. Call a pair of clusters in~$R(H)$ `good' if they lie in less than~$2\sqrt{\eps_3}t$ irregular triples. The number of good pairs of clusters of~$\J$ is at least~$(1-\frac{1}{2}\sqrt{\eps_3})\binom{t}{2} \ge (1-\beta)\binom{t}{2}$. 

For~$i \in [t]$, let~$i$ be the vertex in~$R$ for which~$V_i$ is the corresponding cluster of~$\J$. Lemma~\ref{Lma: degree conditions in d reduced k graph} implies that for any good pair of clusters~$V_i, V_j \in R(H)$, we have 
$${\deg}(ij; R) \ge \left(\overline{\deg}(V_iV_j; R(H)) - d_3 - \frac{2\sqrt{\eps_3}t}{t-2}\right)(t-2) \ge \left(\gamma + \frac{\alpha}{2}\right) t.$$
This shows that all but at most~$\beta\binom{t}{2}$ pairs of vertices have codegree at least~$(\gamma + \alpha/2)t$ in~$R$.
\end{proof}

Say that a $3$-graph $H$ is \emph{strongly $(\mu, \theta)$-dense} if it is $(\mu, \theta)$-dense and for all edges $e \in E(H)$ and all pairs $X \subseteq e$ we have $\deg_H(X) \ge \mu(n-2)$. We use the following result by Han, Lo and Sanhueza-Matamala~\cite{strongdense}. Their statement of the lemma actually applied to all $k \geq 3$, and did not include the final ``moreover'' statement, but this can be read off from their proof.

\begin{lemma}[{\cite[Lemma 8.4 for~$k=3$]{strongdense}}] \label{lma: find a strongly dense subgraph}
  Let $n \ge 6$ and $0< \mu, \theta <1$. Let $H$ be a $3$-graph on~$n$ vertices that is $(\mu, \theta)$-dense. Then there exists a subgraph $H'$ on $V(H)$ that is strongly $(\mu - 8 \theta^{1/4}, \theta + \theta^{1/4})$-dense. Moreover, $|E(H-H')|\le 8\theta^{1/2}\binom{n}{3}$. 
\end{lemma}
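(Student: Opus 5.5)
The plan is a one-shot cleaning procedure: identify a small set $\mathscr{S}'$ of ``dangerous'' pairs and delete from $H$ every edge containing a pair of $\mathscr{S}'$. This is Lemma 8.4 of~\cite{strongdense} for $k=3$, so the aim is only to recover that argument, including the edge count behind the ``moreover'' statement. We may assume $\theta$ is small, say $\theta^{1/4}\le 1/16$. Otherwise $\theta+\theta^{1/4}\ge 1$ or $\mu-8\theta^{1/4}\le 0$. In either case $H'=H$ or $H'=\emptyset$ works trivially: the density condition is vacuous, the strong condition holds vacuously for the empty graph, and $8\theta^{1/2}\ge 1$ covers the edge count.

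Let $\mathscr{S}$ be the set of pairs with $\deg_H(S)<\mu(n-2)$, so $|\mathscr{S}|\le\theta\binom n2$. Let $X$ be the set of vertices lying in at least $\theta^{1/2}n$ pairs of $\mathscr{S}$. Double counting gives $|X|\theta^{1/2}n\le 2|\mathscr{S}|$, so $|X|\le \theta^{1/2}n$. Set
\[
\mathscr{S}'=\mathscr{S}\cup\{P\in\tbinom{V(H)}2 : P\cap X\neq\emptyset\},
\]
and let $H'$ be obtained from $H$ by deleting every edge containing a pair of $\mathscr{S}'$. Then $|\mathscr{S}'|\le \theta\binom n2+|X|n\le(\theta+2\theta^{1/2}+o(1))\binom n2$, and $2\theta^{1/2}\le\theta^{1/4}$ for small $\theta$, which gives the required bound on the exceptional set.

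Next we bound degrees. Take a pair $P=xy\notin\mathscr{S}'$, so $x,y\notin X$ and $\deg_H(P)\ge\mu(n-2)$. An edge $xyw$ is deleted only if $xw$ or $yw$ lies in $\mathscr{S}'$. This requires $xw\in\mathscr{S}$ or $yw\in\mathscr{S}$ (at most $2\theta^{1/2}n$ choices of $w$, since $x,y\notin X$), or $w\in X$ (at most $\theta^{1/2}n$ choices). Hence
\[
\deg_{H'}(P)\ge\mu(n-2)-3\theta^{1/2}n\ge(\mu-8\theta^{1/4})(n-2).
\]
This shows that $H'$ is $(\mu-8\theta^{1/4},\theta+\theta^{1/4})$-dense. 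Moreover, every edge of $H'$ has all three of its pairs outside $\mathscr{S}'$, so each such pair satisfies the same bound. This is exactly the ``strongly dense'' requirement.

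Finally we count deleted edges. Each deleted edge contains a pair of $\mathscr{S}$ or a vertex of $X$. So
\[
|E(H-H')|\le|\mathscr{S}|n+|X|\tbinom n2\le \theta n^3/2+\theta^{1/2}n^3/2,
\]
which is at most $8\theta^{1/2}\binom n3$ for $n\ge6$, after checking constants.

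The only delicate point is the choice of the threshold $\theta^{1/2}$ defining $X$. It must balance $|X|$, which controls both $|\mathscr{S}'|$ and the number of deleted edges, against the degree loss $3\theta^{1/2}n$. The first thing to try is this threshold. If a constant fails, say in the small-$n$ regime $n\ge 6$ where $n^3/2$ versus $\binom n3$ matters, the fallback is to adjust the exponent between $1/4$ and $1/2$.
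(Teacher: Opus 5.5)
Your argument is correct and gives a complete, self-contained proof of the $k=3$ case. The paper has no proof of its own to compare against: it quotes \cite[Lemma 8.4]{strongdense} and only remarks that the bound on deleted edges can be read off from that proof. Your key device, deleting every edge through a vertex of $X$, is what stops the cascade of codegree losses. Your three estimates (exceptional set, codegree loss below $3\theta^{1/2}n$, deleted edges) are sound up to the two points below.

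\textbf{First repair: the opening reduction.} As written it is false. For $\theta^{1/4}=1/10$ and $\mu=0.9$ we have $\theta+\theta^{1/4}<1$ and $\mu-8\theta^{1/4}>0$, so neither trivial case applies. The correct dichotomy is as follows. If $\theta^{1/4}\ge 1/8$, then $\mu-8\theta^{1/4}<0$ because $\mu<1$, so $H'=H$ works with empty exceptional set and no deleted edges. Otherwise $\theta^{1/4}<1/8$. You use smallness of $\theta$ only once, in $2\theta^{1/2}\le\theta^{1/4}$, which needs just $\theta^{1/4}\le 1/2$.

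\textbf{Second repair: the $o(1)$ term.} The lemma is claimed for every $n\ge 6$, not just large $n$, so this term must go. From $2|\mathscr{S}|\le\theta n(n-1)$ you get $|X|\le\theta^{1/2}(n-1)$. At most $|X|(n-1)\le 2\theta^{1/2}\binom n2$ pairs meet $X$, so $|\mathscr{S}'|\le(\theta+2\theta^{1/2})\binom n2$ exactly.

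Counting the deleted edges exactly also removes any need to check constants at $n=6$:
\[
|E(H-H')|\le|\mathscr{S}|(n-2)+|X|\tbinom{n-1}{2}\le(3\theta+3\theta^{1/2})\tbinom n3\le 6\theta^{1/2}\tbinom n3 .
\]
The codegree step only needs $3\theta^{1/2}n\le 8\theta^{1/4}(n-2)$. This holds for all $n\ge 6$ and all $\theta<1$, since $n-2\ge 2n/3$. So no constant fails, and the fallback of adjusting the exponent is never needed.
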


\section{Connecting Lemma} \label{sec: Connecting Lemma}

The main aim of this section is to prove the Connecting Lemma (Lemma~\ref{lma: simpler connecting lemma}). A rough sketch of the proof of the Connecting Lemma is as follows. In the proof of Lemma~\ref{lma: simpler connecting lemma}, we use the strong Hypergraph Regularity, particularly, the Regular Slice Lemma (Lemma~\ref{lem: regular slice}) proved by Allen, B\"ottcher, Cooley and Mycroft~\cite{ABCM}. As a consequence of the Regular Slice Lemma, an appropriate reduced~$3$-graph of~$H$ inherits the minimum codegree condition approximately. In order to convert this to a minimum codegree condition, we use a result (Lemma~\ref{Lma: min codeg}) by Ferber and Kwan~\cite{Ferber-Kwan}. Once we have a minimum codegree in the reduced~$3$-graph, we use Lemma~\ref{lma: tight connectivity at 7n/9} to get a squared-tight-walk between any two triples of vertices of the reduced~$3$-graph. Note that a tight walk lets us fix the order of the initial triple but not the final one. In order to have a squared-tight-walk that respects the order of both the initial and final triples of vertices, we preserve a~$K_5^3$ in the reduced graph. We then use the Extension Lemma (Lemma~\ref{lma :ext}) due to Allen, B\"ottcher, Cooley and Mycroft to get a squared-tight-path between almost all pairs of ordered vertex-triples supported on the regular slice (see Section~\ref{sec: Hypergraph Regularity} for the definition of a regular slice) on a constant number of vertices. Each pair of vertex-triples~$(\mathbf{e}_1, \mathbf{e}_2)$ in~$H$ locally extends to one such pair of ordered vertex-triples supported on the regular slice, avoiding the small forbidden set~$X$.

The next result by Ferber and Kwan~\cite{Ferber-Kwan}
provides a way of translating an `almost minimum codegree' condition to an exact minimum codegree condition on a suitable subgraph.

\begin{lemma}[{\cite[Lemma 3.4]{Ferber-Kwan}}] \label{Lma: min codeg} 
Let~$G$ be a $k$-graph on $n$ vertices in which at most~$\delta \binom{n}{\ell}$ sets $X \in \binom{V(G)}{\ell}$ have $\deg_G(X) < (\mu + \eta)\binom{n-\ell}{k-\ell}$. Also let $Q \geq 2\ell$, and choose a set~$S \subseteq V(G)$ of size $|S| = Q$ uniformly at random. Then with probability at least~$1 - \binom{Q}{\ell}(\delta + e^{-\eta^2 Q/4k^2})$ the induced subgraph~$G[S]$ has minimum $\ell$-degree $\delta_{\ell}(G[S]) \geq (\mu + \eta/2)\binom{Q-\ell}{k-\ell}$.
\end{lemma}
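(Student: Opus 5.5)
This lemma is a quoted result of Ferber and Kwan, so one could simply cite it. The proof I would give is a union bound over the $\binom{Q}{\ell}$ $\ell$-subsets of the random set $S$, with each term controlled by a hypergeometric concentration estimate.

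First I would reduce to a single labelled $\ell$-set. Choosing $S$ uniformly at random of size $Q$ is equivalent to first choosing an ordered sequence of $\ell$ distinct vertices forming a set $X$, and then choosing the remaining $Q-\ell$ vertices uniformly from $V(G)\setminus X$. So it suffices to fix an index set $I\in\binom{[Q]}{\ell}$ and bound the probability that the $\ell$-set $X$ of $S$ sitting in positions $I$ (under a uniformly random ordering of $S$) is bad. Here $X$ is bad if either $X$ is one of the at most $\delta\binom{n}{\ell}$ low-degree sets, or $\deg_{G[S]}(X) < (\mu+\eta/2)\binom{Q-\ell}{k-\ell}$. Since $X$ is a uniformly random $\ell$-set, the first event has probability at most $\delta$. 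A union bound over the $\binom{Q}{\ell}$ choices of $I$ then gives the claimed failure probability, provided I show that the second event has probability at most $e^{-\eta^2Q/4k^2}$ conditional on $X$ being a good set.

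Now fix a good $X$, so $\deg_G(X)\ge(\mu+\eta)\binom{n-\ell}{k-\ell}$, and let $S'=S\setminus X$. Then $S'$ is a uniform $(Q-\ell)$-subset of $V\setminus X$, and $\deg_{G[S]}(X)$ counts the $(k-\ell)$-subsets of $S'$ lying in the link of $X$. Its expectation is exactly the link density times $\binom{Q-\ell}{k-\ell}$, hence at least $(\mu+\eta)\binom{Q-\ell}{k-\ell}$. The count is a $U$-statistic of order $k-\ell$ in sampling without replacement. To get concentration, I would use Hoeffding's standard trick: split $S'$ into $\lfloor (Q-\ell)/(k-\ell)\rfloor$ disjoint blocks of size $k-\ell$, written as an average over random partitions. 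This expresses the normalised degree as an average of means of roughly $(Q-\ell)/(k-\ell)\ge Q/(2k)$ indicator variables sampled without replacement; the lower bound uses $Q\ge 2\ell$. Hoeffding's bound for sampling without replacement, which is at least as strong as in the independent case, together with convexity of the exponential, gives a deviation probability for $\eta/2$ of at most $\exp(-2(\eta/2)^2\cdot Q/(2k))=e^{-\eta^2Q/(4k)}\le e^{-\eta^2Q/4k^2}$.

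The main obstacle is this concentration step. Lemma~\ref{Lma: Chernoff hyp} only handles linear statistics with multiplicative error, so the U-statistic decomposition is needed to obtain an additive deviation of $\eta/2$ with an exponent linear in $Q/k$. Everything else is routine bookkeeping, including keeping the $\delta$ term separate and performing the final union bound over $\binom{Q}{\ell}$.
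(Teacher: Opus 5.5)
The paper gives no proof of this lemma; it is quoted from Ferber and Kwan, so your argument has to stand on its own. Its outer structure is correct. The position trick is fine, and so is the $\delta$ term, which comes from the $\ell$-set in positions $I$ being uniformly distributed. Conditional on that set being a fixed good $X$, the rest $S'=S\setminus X$ is indeed a uniform $(Q-\ell)$-subset of $V(G)\setminus X$ with $\mathbb{E}\deg_{G[S]}(X)\ge(\mu+\eta)\binom{Q-\ell}{k-\ell}$. The final union bound is also fine.

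\textbf{The gap is in the concentration step.} After Hoeffding's averaging over partitions, each $W_\sigma$ is the mean of the link indicator $h$ over $N$ pairwise \emph{disjoint} random $(k-\ell)$-sets $B_1,\dots,B_N\subseteq V(G)\setminus X$. That is not a sample without replacement from the population of $(k-\ell)$-sets, because disjointness is far stronger than distinctness. So Hoeffding's comparison with the independent case, which is a statement about sums of single draws, does not apply.

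In fact the $h(B_j)$ can be positively correlated. If the link of $X$ is a perfect matching on $n'$ vertices, then $\mathbb{P}(h(B_2)=1\mid h(B_1)=1)=1/(n'-3)>1/(n'-1)=\mathbb{P}(h(B_2)=1)$. Take $n'=4=Q-\ell$ and $k-\ell=2$. The two blocks then form a uniformly random perfect matching of $K_4$, so $W_\sigma\in\{0,1\}$ with $\mathbb{P}(W_\sigma=1)=1/3=p$. Hence $\mathbb{P}(W_\sigma\le p-1/3)=2/3>e^{-2\cdot 2\cdot(1/3)^2}$. The bound $\mathbb{E}e^{\lambda(p-W_\sigma)}\le e^{\lambda^2/(8N)}$ that your convexity step needs also fails, for example at $\lambda=3$. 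In this example $U$ itself is constant, so the lemma is not contradicted; it is your intermediate inequality that is false. So the exponent $\eta^2Q/4k$ you claim is not established. That it is stronger than the $\eta^2Q/4k^2$ in the statement is a hint.

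\textbf{How to repair it.} Drop the U-statistic decomposition and apply a bounded-differences inequality for uniformly random sets of fixed size directly to $f(S')=\deg_{G[S]}(X)/\binom{Q-\ell}{k-\ell}$. Expose the elements of $S'$ one at a time, and couple two possible values of the next element by a swap. This shows the Doob martingale increments lie in intervals of length $c=\binom{Q-\ell-1}{k-\ell-1}/\binom{Q-\ell}{k-\ell}=(k-\ell)/(Q-\ell)$. The Hoeffding--Azuma inequality then gives
\[
\mathbb{P}\bigl(f\le \mathbb{E}f-\eta/2\bigr)\le \exp\Bigl(-\frac{2(\eta/2)^2}{(Q-\ell)c^2}\Bigr)=\exp\Bigl(-\frac{\eta^2(Q-\ell)}{2(k-\ell)^2}\Bigr)\le e^{-\eta^2Q/4k^2},
\]
using $Q-\ell\ge Q/2$. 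This is exactly the exponent in the statement, which explains the $k^2$, and with it the rest of your argument goes through unchanged.
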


We first find a short squared-tight-walk between any two ordered triples of vertices that are edges of an appropriate reduced graph, respecting the order of both triples. 

\begin{lemma} \label{lma: find a walk in the reduced graph}
    Let $1/t \ll \eta, \beta \ll \alpha$. Let $R$ be a $3$-graph on $t$ vertices that is strongly $(7/9 + \alpha, \beta)$-dense, and let $\mathbf{x}$ and $\mathbf{y}$ be ordered triples which are edges of~$R$. Then there exists a squared-tight-walk on at most~$1/\eta$ vertices from~$\mathbf{x}$ to~$\mathbf{y}$.
\end{lemma}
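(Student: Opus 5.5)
Since $R$ is only almost codegree-dense, I will first pass to a subgraph with genuine minimum codegree. I will then apply Lemma~\ref{lma: tight connectivity at 7n/9} there, and use a $K_5^3$ to fix the orderings of the end triples.

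\textbf{Step 1: a bounded-size subgraph with full codegree.} By the strong density of $R$, every pair contained in an edge of $R$, in particular every pair inside $\mathbf{x}$ or $\mathbf{y}$, has codegree at least $(7/9+\alpha)(t-2)$. All but $\beta\binom t2$ pairs have this codegree as well. Choose a constant $Q$ with $\eta \ll 1/Q \ll \alpha$ and $\beta \ll 1/Q$. Pick a random $Q$-set $S$ and put $S' = S\cup V(\mathbf{x})\cup V(\mathbf{y})$. By Lemma~\ref{Lma: min codeg} with $\ell=2$, with positive probability $R[S]$ has minimum codegree at least $(7/9+\alpha/2)Q$. I also want the six special vertices to keep codegree into $S$ for their relevant pairs. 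For a pair inside $\mathbf{x}$ or $\mathbf{y}$ this follows from a hypergeometric (Hoeffding) bound, since its codegree is large deterministically. For a pair consisting of one special vertex $v$ and one vertex $s\in S$, I will use that only few pairs are bad. For each fixed $v$, the expected number of $s\in S$ with $vs$ bad is at most roughly $\beta^{1/2}Q$ (averaging over $v$ via Markov, or simply by using the strong density of edges at $v$). Bad pairs may lack codegree, so instead of requiring $R[S']$ to have full minimum codegree, I will only use $R[S]$ together with the edges $\mathbf{x}$ and $\mathbf{y}$.

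\textbf{Step 2: connecting $\mathbf{x}$ and $\mathbf{y}$ into $R[S]$.} Since all pairs of $\mathbf{x}=(x_1,x_2,x_3)$ have codegree at least $(7/9+\alpha)t > 2t/3$, their common neighbourhood has size at least $t/3$. Hence there are many $w$ with $\{x_1,x_2,x_3,w\}$ a $K_4^3$. Iterating this, using that the new pairs containing $w$ are good for most choices of $w$, I extend $\mathbf{x}$ by a few steps of a squared-tight-walk $x_1x_2x_3w_1w_2w_3$ whose final triple lies in $S$, where every pair among the $w_i$ has full codegree. I do the same backwards from $\mathbf{y}$. For this I choose the $w_i$ in $S$ directly, using that the common neighbourhoods have linear size and therefore meet $S$ in about $Q/3$ vertices with high probability.

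\textbf{Step 3: walking and reordering inside $R[S]$.} Since $R[S]$ has $\delta_2>7|S|/9$, Lemma~\ref{lma: tight connectivity at 7n/9} shows that $\T(R[S])$ is tightly connected, so there is a squared-tight-walk from the triple $w_1w_2w_3$ to some ordering of the target triple. Its length is at most $|S|^4$, which is less than $1/(3\eta)$. The walk fixes the order of the initial triple but not the final one. To correct the order, I take a copy of $K_5^3$ in $R[S]$, which exists by Theorem~\ref{Thm: codegree density of K35} since $7/9>0.74$ and $Q$ is large. Inside a $K_5^3$ on $a,b,c,d,e$, any ordering of $abc$ can be reached from any other by a squared-tight-walk, since all $4$-sets are tetrahedra (e.g.\ $abcdeab\ldots$ permutes cyclically and $d,e$ allow transpositions). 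I walk to the $K_5^3$, perform the permutation, and walk back along the tightly connected $\T(R[S])$. This requires care, since walking back also scrambles the order. The fix is to apply the permutation as the very last step: I walk from the start to a triple inside $K_5^3$, then from that triple to the target, observe which permutation $\sigma$ arises, and insert the corresponding correction inside $K_5^3$ beforehand. This works because the map from the order at $K_5^3$ to the order at the end is a fixed bijection once the walk is fixed. Concatenating the walks from Steps 2 and 3 gives a walk on at most $1/\eta$ vertices.

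The main obstacle is Step 2, namely transferring the strong density condition, which holds only on pairs inside edges, into a clean attachment of $\mathbf{x}$ and $\mathbf{y}$ to the random set $S$ with controlled codegrees.
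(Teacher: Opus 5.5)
Your overall plan matches the paper's: a random constant-size set $S$ via Lemma~\ref{Lma: min codeg}, short extensions of $\mathbf{x}$ and $\mathbf{y}$ into $S$, tight connectivity of $\T(R[S])$, a $K_5^3$ to fix orderings, and shortening by deleting repeated ordered triples. However, the reordering step in Step~3 fails as you describe it.

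You assert that once the walk $W_2$ from a triple $(a_1,a_2,a_3)$ of the $K_5^3$ to the target is fixed, the order at the $K_5^3$ is carried to the order at the end by a fixed bijection. This is false. A squared-tight-walk always discards the earliest vertex of its current triple, so permuting the first three entries of $W_2=(a_1,a_2,a_3,u_4,u_5,\dots)$ changes every later window. For example, $(a_2,a_1,a_3,u_4,u_5,\dots)$ needs $\{a_1,a_3,u_4,u_5\}$ to span a $K_4^3$, which nothing guarantees. More decisively, any walk that ends by following $W_2$ must enter it at exactly the ordered triple $(a_1,a_2,a_3)$. Its final triple therefore equals that of $W_2$, whatever correction you perform inside the $K_5^3$ beforehand. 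Write $z_1z_2z_3y_1y_2y_3$ for your backward extension of $\mathbf{y}$. Then Step~3 still only reaches \emph{some} ordering of $\{z_1,z_2,z_3\}$, which is exactly the problem the $K_5^3$ was meant to solve.

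The missing idea is to walk \emph{into} the $K_5^3$ from both ends, and to use that reversing a squared-tight-walk gives a squared-tight-walk. By Lemma~\ref{lma: tight connectivity at 7n/9}, with the initial order of each walk prescribed:
\begin{enumerate}
\item take $W_1$ from $(w_1,w_2,w_3)$ to some ordered triple $\mathbf{a}$ of the $K_5^3$;
\item take $W_2$ from $(z_3,z_2,z_1)$ to some ordered triple $\mathbf{b}=(b_1,b_2,b_3)$ of the $K_5^3$.
\end{enumerate}
As you observed, there is a walk $W'$ inside the $K_5^3$ from $\mathbf{a}$ to $(b_3,b_2,b_1)$. Concatenating $W_1$, $W'$ and $\overleftarrow{W_2}$ along their common end triples gives a walk ending exactly at $(z_1,z_2,z_3)$. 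This is precisely the paper's argument.

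A minor point: your ``bad pairs'' discussion is unnecessary. Every pair you actually use lies in an edge of $R$, so it has codegree at least $(7/9+\alpha)(t-2)$ by strong density. (Markov averaging is also not available for a fixed special vertex $v$.) What does need an argument is the codegree \emph{into} $S$ of pairs $vs$, where $v$ is one of the six special vertices and $s$ is itself chosen from $S$. A first-moment count, conditioning on $s\in S$, shows that the expected number of such pairs with $\deg_R(vs,S)<(7/9+\alpha/2)Q$ is $O(Qe^{-\Omega(\alpha^2 Q)})$. A union bound over all pairs of $R$ would not suffice, since $t\gg Q$.
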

\begin{proof}
Fix~$T$ such that $1/t \ll \eta, \beta \ll 1/T \ll \alpha$. Let $W \subseteq V(R)$ be a set of~$T$ vertices chosen uniformly at random. By Lemma~\ref{Lma: min codeg} with $R, 3, t, T, \beta, 7/9, \alpha, 2$ playing the roles of $G, k, n, Q, \delta, \mu, \eta, \ell$, respectively, 

$$\mathbb{P}\left( \delta_2(R[W]) \ge \left(\frac{7}{9} + \frac{\alpha}{2} \right) T \right) \ge 1 - \binom{T}{2}\left(\beta + e^{-\alpha^2 T / 36} \right) \ge \frac{99}{100}.$$

\noindent Note that for all $x, y \in V(R)$ with $\deg_R(xy)>0$, $\mathbb{E}
\deg_R(xy, W) = (7/9 + \alpha) |W|$ and $\deg_R(xy, W) \sim \mathrm{Hyp}(t, T, \deg_R(xy))$. By Hoeffding's inequality (Lemma~\ref{Lma: Chernoff hyp}), we have $$\mathbb{P}\left( \deg_R(xy, W) \le (7/9 + \alpha/2)T \right) \le 2e^{-7\alpha^2T/108}.$$ Taking a union bound over all elements of $\{xy \in \binom{V(H)}{2} \colon \deg_R(xy)>0\}$, we deduce that with high probability, we have $\delta_2(R[W]) \ge (7/9 + \alpha/2)T$ and for all $xy \in \binom{V(R)}{2}$ such that $\deg_R(xy)>0$, $\deg_R(xy, W) \ge (7/9 + \alpha/2)T$. Fix such a set~$W$ of vertices. Note that by Theorem~\ref{Thm: codegree density of K35},~$W$ contains a $K_5^3$. Let $V\left(K_5^3\right) = \{v_i \colon  i \in [5]\}$.

We now connect $\mathbf{x}$ and $\mathbf{y}$ via~$W$. Let $\mathbf{x} = x_1x_2x_3, \mathbf{y}=y_1y_2y_3$ and $x_4, x_5, x_6 \in W \setminus\{x_i, y_i \colon i \in [3]\}$ be distinct vertices such that $x_1x_2x_3x_4x_5x_6$ is a squared-tight-path. There are at least $(T/3)^3$ ways to pick $x_4, x_5, x_5 \in W \setminus\{x_i, y_i \colon i \in [3]\}$. Similarly, fix $y_4, y_5, y_6 \in W\setminus\{x_i, x_{i+3}, y_i \colon i \in [3]\}$ such that $y_3y_2y_1y_4y_5y_6$ is a squared-tight-path. By Lemma~\ref{lma: tight connectivity at 7n/9}, $\T(R[W])$ is tightly connected. Thus, there exists a squared-tight-walk $W_1$ from~$x_4x_5x_6$ to~$v_{\sigma(1)}v_{\sigma(2)}v_{\sigma(3)}$ for some $\sigma \in S_3$.  Similarly, there exists a squared-tight-walk $W_2$ from~$y_4y_5y_6$ to~$v_{\sigma'(1)}v_{\sigma'(2)}v_{\sigma'(3)}$ for some $\sigma' \in S_3$.  Let $W'$ be a squared-tight-walk on $\{v_i \colon i \in [5]\}$ from~$v_{\sigma(1)}v_{\sigma(2)}v_{\sigma(3)}$ to~$v_{\sigma'(3)}v_{\sigma'(2)}v_{\sigma'(1)}$. 
Let $\overleftarrow{W_2}$ be the squared-tight-walk obtained from~$W_2$ by reversing its vertex sequence.
Let $W = W_1  W'  \overleftarrow{W_2}$ be the squared-tight-walk from~$x_4x_5x_6$ to~$y_6y_5y_4$.

Note that by deleting all vertices in~$W$ between repeated occurrences of an ordered triple of vertices, we may assume that $W$ contains at most $T^3$ vertices. Thus, $x_1x_2x_3  W  y_1y_2y_3$ is a squared-tight-walk from~$\mathbf{x}$ to~$\mathbf{y}$ on at most $T^3 + 6 \le 1/\eta$ vertices. \end{proof}

We now find a short squared-tight-path between any two ordered triples of vertices that are edges of~$H$, respecting the order of both triples.  

\begin{lemma} \label{lma: Connecting Lemma}
    Let $1/n \ll \eta \ll \alpha$. Let $H$ be a $3$-graph on $n$ vertices with $\delta_2(H) \ge (7/9 + \alpha)n$, and let $\mathbf{x}$ and $\mathbf{y}$ be ordered triples which are edges of~$H$. Then there is a squared-tight-path in~$H$ on at most~$1/\eta$ vertices from~$\mathbf{x}$ to~$\mathbf{y}$.
\end{lemma}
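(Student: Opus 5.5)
We will pass to a reduced $3$-graph via the Regular Slice Lemma, connect there using Lemma~\ref{lma: find a walk in the reduced graph}, and pull the walk back to $H$ with the Extension Lemma. Choose constants $1/n \ll \eps \ll d_2 \ll c \ll \eps_3, d_3 \ll \beta' \ll 1/t_0,\ \eta' \ll \alpha$ with $1/\eta' \ll 1/\eta$. We apply Lemma~\ref{lem: regular slice} to $H$ (after discarding at most $t_1!$ vertices if needed; this barely affects codegrees) to get a regular slice $\J$ with $t$ clusters and density vector $(d_2)$. By Proposition~\ref{Prop: almost codegree in the reduced graph}, $R = R_{d_3}^{\J}(H)$ is $(7/9+\alpha/2,\beta')$-dense. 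By Lemma~\ref{lma: find a strongly dense subgraph} it has a strongly $(7/9+\alpha/4,2\beta'^{1/4})$-dense subgraph $R'$ missing at most $8\beta'^{1/2}\binom t3$ edges.

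\textbf{Step 1: anchor the ends in the slice.} We want edges $\mathbf{x}'=x'_1x'_2x'_3$ and $\mathbf{y}'$ of $H$ with three properties:
\begin{itemize}[label={}]
\item $x_1x_2x_3x'_1x'_2x'_3$ and $y_1y_2y_3y'_1y'_2y'_3$ (read appropriately) are squared-tight-paths;
\item the vertices of $\mathbf{x}'$ lie in distinct clusters forming an edge of $R'$, and their pairs lie in $\J^{(2)}$;
\item $\mathbf{x}'$ is a "typical" copy, i.e.\ one of the non-exceptional copies in Lemma~\ref{lma :ext}, and similarly for $\mathbf{y}'$.
\end{itemize}

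To find these, consider the rooted $3$-graph $F$ on roots $x_1,x_2,x_3$ plus new vertices $x'_1,x'_2,x'_3$ whose edges are those required for a squared-tight-path. Since $\delta_2(H)\ge(7/9+\alpha)n$, a greedy count shows $d_F(H;x_1,x_2,x_3) \ge (1/3)^3\cdot$const $>0$. Lemma~\ref{lem: regular slice}\ref{itm: skeleton} then shows that a positive proportion of copies of $\Hskel$ in $\J$ (triples in distinct clusters supported by $\J$) extend to such a path. Among these we must avoid two bad classes:
\begin{itemize}[label={}]
\item triples whose clusters are not an edge of $R'$ (few, since $R'$ is dense and few triples of clusters are irregular or sparse);
\item copies exceptional for the Extension Lemma.
\end{itemize}
This needs a counting argument. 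The positive density $\ge c_0$ of good extensions beats the bad proportion, which is $O(\beta'^{1/4}+d_3)$ plus the $\beta$ from Lemma~\ref{lma :ext}. We must handle that $d_3$-dense triples still contribute only a few edges. We do the same for $\mathbf{y}$, keeping disjointness.

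\textbf{Step 2: walk in $R'$ and embed.} Let $X_1X_2X_3$ and $Y_1Y_2Y_3$ be the cluster triples of $\mathbf{x}'$ and $\mathbf{y}'$. Lemma~\ref{lma: find a walk in the reduced graph} applied to $R'$ gives a squared-tight-walk $X_1X_2X_3=Z_1,\dots,Z_\ell=Y_1Y_2Y_3$ in $R'$ with $\ell\le 1/\eta'$. Vertices of this walk may repeat clusters. Let $\Hy$ be the complex of a squared-tight-path $u_1\dots u_\ell$ (down-closure, with $3$-edges on every triple within four consecutive vertices), with $u_i$ assigned to class $Z_i$. This is partite with respect to the clusters, because any four consecutive $Z_i$ are distinct, being a $K^3_4$ in $R'$; class sizes are at most $\ell$.

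Apply Lemma~\ref{lma :ext} with $\Hy'$ the subcomplex on $\{u_1,u_2,u_3,u_{\ell-2},u_{\ell-1},u_\ell\}$, where $\cG$ is $\J$ restricted to the involved clusters together with $H$'s triples. Every $3$-edge of $\Hy$ lies over an edge of $R'$, so it is $(d,\eps_3,r)$-regular with $d\ge d_3$. If $\ell$ is small so that the ends share clusters or edges, we first pad the walk by going around a $K^3_5$, as in Lemma~\ref{lma: find a walk in the reduced graph}, to make $\ell\ge 7$. Then $\Hy'$ is two disjoint triangles with no edges between them.

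\textbf{Main obstacle.} The crux is that the pair $(\mathbf{x}',\mathbf{y}')$ must be typical as a copy of $\Hy'$, not merely each triple separately. We handle this by choosing $\mathbf{x}'$ first, then choosing $\mathbf{y}'$ among the many good options so that the joint copy is non-exceptional. This works because the exceptional set has measure $\beta|\Hy'_\cG|$, so for most good $\mathbf{x}'$ most $\mathbf{y}'$ are fine, again by Markov's inequality (Lemma~\ref{lma: Markov}). The joint copy then extends in at least $cm^{\ell-6}$ ways, while fewer than $\ell^2 m^{\ell-7}$ of these reuse a vertex of $x,y$ or collide. So we obtain a squared-tight-path $x_1x_2x_3\,\mathbf{x}'\dots\mathbf{y}'\,y_3y_2y_1$, with orientation arranged so that it ends in $\mathbf{y}$. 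It has at most $1/\eta'+6\le 1/\eta$ vertices.
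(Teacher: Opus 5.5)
Your proposal follows essentially the same route as the paper: a regular slice with a strongly dense reduced graph $R'$, anchoring each end via Lemma~\ref{lem: regular slice}\ref{itm: skeleton} and averaging over edges of $R'$ (the paper's Claim~\ref{clm: subslice}), a walk from Lemma~\ref{lma: find a walk in the reduced graph}, and Lemma~\ref{lma :ext} together with the comparison that good end-pairs have constant density while exceptional ones have density at most $\beta$. Only bookkeeping needs repair: fix the two cluster triples by that averaging before building the walk and $\Hy$; order the constants so that $c \ll d_2$ (with $\eps, c$ chosen as functions of $t_1$) and $t_0$ is large compared with $1/d_3, 1/\beta', 1/\eta'$; and note that the end triples span no cross edges only when $\ell \ge 9$, not $\ell \ge 7$ (your explicit exclusion of paths meeting $\{x_i,y_i\}$ is a detail the paper leaves implicit).
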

\begin{proof}
Choose constants: $$1/n  \ll 1/t_1 \ll 1/t_0  \ll 1/r, \eps\ll c\ll \eps_3 \ll \beta' \ll d_2, d_3, \eta \ll \theta, \beta \ll \alpha.$$
Let $\eta' = \left(\lfloor 1/\eta \rfloor-6\right)^{-1}$, $\mathbf{x} = x_1x_2x_3$ and $\mathbf{y}=y_1y_2y_3$. Let $H'$ be an induced subgraph of~$H$ on $n'$ vertices by deleting at most $t_1!$ arbitrary vertices from~$V(H)\setminus\{x_i, y_i \colon i \in [3]\}$, such that $n' \equiv 0 \pmod{t_1!}$. Note that $\delta_2(H') \ge (7/9 + \alpha)n - t_1! \ge (7/9+5\alpha/6)n'$. By Lemma~\ref{lem: regular slice} with $H', n', 3$ playing the roles of~$G, n, k$, respectively, let~$\J$ be the $(\cdot, \cdot, \eps, \eps_3, r)$-regular slice for~$H'$ on~$t_0 \le t \le t_1$ clusters $V_1, \dots, V_t$, density parameter $d_2$ and let $R = R_{d_3}(H')$. By Proposition~\ref{Prop: almost codegree in the reduced graph} with $H', n', 5\alpha/6, \theta, 7/9$ playing the roles of $H, n, \alpha, \beta, \gamma$, respectively,~$R$ is $(7/9 + 5\alpha/12, \theta)$-dense.  By Lemma~\ref{lma: find a strongly dense subgraph}, with $R$ and $7/9 + 5\alpha/12$ playing the roles of~$H$ and $\mu$ respectively, there exists a subgraph $R'$ of~$R$ which is strongly $(7/9 + \alpha/3, 2\theta^{1/4})$-dense. Moreover, $|E(R - R')|\le 8\sqrt{\theta}\binom{t}{3}$.

 We say that the ordered triple $x_4x_5x_6$ of vertices is an \emph{extension} of the ordered triple $x_1x_2x_3$ if $x_1x_2x_3x_4x_5x_6$ is a squared-tight-path in~$H'$.  
 for~$ijk \in E(R)$, let  $\J_{ijk} = \{ v_iv_jv_k \in V_i\times V_j \times V_k \colon \{v_i,v_j,v_k\} \in K_3(\J) \}$.
 Note that $ijk$ and $v_iv_jv_k$ are considered as ordered triples.
    \begin{claim} \label{clm: subslice}
       Let $I \subseteq [t]$ be such that $|I| \le 3$. Then there exists an ordered triple $ijk \in \binom{V(R') \setminus I}{3} $ and $\J_{ijk}' \subseteq \J_{ijk}$ such that $ijk \in E(R')$, $|\J_{ijk}'|\ge |\J_{ijk}|/27$ and each member of~$\J_{ijk}'$ is an extension of~$x_1x_2x_3$. 
    \end{claim}
\begin{proofclaim}
Let $\tilde{H}$ be a squared-tight-path on $6$ vertices from~$x_1x_2x_3$. 
Observe that by the codegree of~$H'$ and Fact~\ref{fct: tetrahedral codegree}, the number of ordered triples $x_4x_5x_6$ such that $x_1\dots x_6$ forms an extension of~$x_1x_2x_3$ in~$H'$ is at least $$\left(\frac{1}{3} + \frac{5\alpha}{2}\right)n'\left(\left(\frac{1}{3} + \frac{5\alpha}{2}\right)n'-1\right)\left(\left(\frac{1}{3} + \frac{5\alpha}{2}\right)n'-2\right) \ge \left(\frac{1}{27} + \alpha\right)n'^3.$$
By Lemma~\ref{lem: regular slice}\ref{itm: skeleton} with $\tilde{H}, 3, \eps_3$ playing the role of~$H, \ell, \eps_k$, respectively, we have $$d_{\tilde{H}} (H'; x_1, x_2, x_3, \J) \ge d_{\tilde{H}}(H', x_1, x_2, x_3) - \eps_3 \ge 1/27 + \alpha - \eps_3 \ge 1/27 + \alpha/2.$$

\noindent By deleting the proportion of triples of clusters which do not correspond to edges of~$R'$ and those involving $I$, we are left with at least \begin{align*} 
    1/27 + \alpha/2 - d_3 - \eps_3 - 8\sqrt{\theta} - 3/t \ge 1/27 + \alpha/4
\end{align*} proportion of the triples of vertices. Each of these triples of vertices is supported on triples of clusters which correspond to edges in~$R' \setminus I$. By averaging over all edges in~$R'\setminus I$, there exists $ijk \in E(R')\setminus I$ such that \begin{align*} \label{eqn: proportion retained on J_ijk}
    d_{\tilde{H}}(H'; x_1, x_2, x_3, \J_{ijk}) \ge 1/27 + \alpha/4.
\end{align*} Let $\J_{ijk}' = \{v_iv_jv_k \in \J_{ijk} \colon v_iv_jv_k \text{ is an extension of  } x_1x_2x_3\}.$ Therefore, we have $|\J_{ijk}'| \ge |\J_{ijk}|/27$.   \end{proofclaim}    

Let $ijk$ and $\J_{ijk}'$ be given by Claim~\ref{clm: subslice} with $I = \emptyset$. Applying Claim~\ref{clm: subslice} again with $I = \{i,j,k\}$ and $x_1x_2x_3 = y_3y_2y_1$ (as an ordered triple), there exists an ordered triple $i'j'k' \in E(R')$ with $ijk \cap i'j'k' = \emptyset$ and $\J_{i'j'k'}' \subseteq \J_{i'j'k'}$ such that for all ordered triples $v_{i'}v_{j'}v_{k'} \in \J_{i'j'k'}'$, we have $v_{i'}v_{j'}v_{k'}$ is an extension of~$y_3y_2y_1$ and $|\J_{i'j'k'}'|\ge |\J_{i'j'k'}|/27$. 

 By Lemma~\ref{lma: find a walk in the reduced graph} with $R', \alpha/3, 2\theta^{1/4}, \eta', ijk, k'j'i'$ playing the roles of~$R, \alpha, \beta, \eta, \mathbf{x}, \mathbf{y}$, respectively, there exists a squared-tight-walk $W$ in~$R'$ of length at most $1/\eta'$ from~$ijk$ to~$k'j'i'$. Let $W = w_1w_2\dots w_b$ where $b \le 1/\eta'$, $w_1w_2w_3 = ijk$, $w_{b-2}w_{b-1}w_b = k'j'i'$. Without loss of generality, we may assume the distinct vertices appearing in~$W$ are $[s]$ and when counted with multiplicity, there are~$b$ vertices. Note that $W$ is $s$-partite and $6 \le s \le b \le 1/\eta'$.  

Let~$\Hy^{(3)}$ be the squared-tight-path formed by replacing each repeated vertex of~$W$ with distinct copies of it, whenever it is repeated in~$W$. Note that~$\Hy^{(3)}$ is~$s$-partite with vertex classes~$V_1, \dots, V_s$ where~$\{V_i\}_{i \in [b]}$ is the set of copies of the vertex classes of~$W$. Let~$\Hy$ be the down-closure of~$\Hy^{(3)}$, so~$\Hy$ is an~$s$-partite~$3$-complex. Let~$\Hy'$ be the subcomplex induced on the first and final three vertices of the squared-tight-path, i.e. on the vertex classes corresponding to~$ijk$ and~$k'j'i'$.

Let~$\cG$ be the~$s$-partite~$3$-complex obtained from~$\J\left[\bigcup_{p \in [s]} V_p \right]$ by adding the edges of~$H'$ supported on $\J_{ijk}'$ and $\J_{i'j'k'}'$ for the triples of vertices $w_1w_2w_3 = ijk$ and $w_{b-2}w_{b-1}w_b = k'j'i'$, the edges supported on $\J\left[\bigcup_{p \in [b-2]}V_{w_p} V_{w_{p+1}} V_{w_{p+2}} \right]$ for all vertex triples $\{w_p w_{p+1} w_{p+2} \colon p \in [b-3]\setminus\{1\}\}$ and all the edges of~$H'$ supported on~$\J$ for any other triple. Note that for any edge~$e \in \Hy^{(3)}$ with index~$A \in \binom{[s]}{3}$, the~$3$-graph~$\cG^{(3)}[V_A]$ is either~$(d, \eps_3, r)$-regular with~$d \ge d_3$ (on the triples of clusters that are edges of~$W$) or~$(1, \eps_3, r)$-regular (on any other triple) with respect to~$\cG^{(2)}[V_A]$.

All the conditions to apply Lemma~\ref{lma :ext} are thus satisfied. Consider a pair of ordered triples of vertices,~$v_1v_2v_3$ from~$V_{w_1}V_{w_2}V_{w_3}$ and~$v_{b-2}v_{b-1}v_b$ from~$V_{w_{b-2}} V_{w_{b-1}} V_{w_b}$. Note that by our assumption on~$ijk$ and~$i'j'k'$ to be vertex-disjoint, so are~$v_1v_2v_3$ and~$v_{b-2}v_{b-1}v_b$. By Lemma~\ref{lma :ext} with~$3, 6, \beta'$ playing the roles of~$k, b', \beta$, respectively, for all but at most~$\beta'$ proportion of all ordered pairs of triples of vertices $(v_1v_2v_3, v_{b-2}v_{b-1}v_b)$ from the clusters of~$\Hy'$, there are at least~$cm^{b-6} \ge cm^3$ extensions to labelled partition-respecting copies of~$\Hy$ in~$\cG$. Each such copy of~$\Hy$ corresponds to a squared-tight-path in~$H'$ on~$b$ vertices with every vertex in the required cluster. Since $\beta' \ll d_2$, $|\J_{ijk}'| \ge |\J_{ijk}|/27 \text{ and } |\J_{i'j'k'}'| \ge |\J_{i'j'k'}'|/27$ by Claim~\ref{clm: subslice}, at least one such copy of~$\Hy^{(3)}$ is a squared-tight-path going from an extension of~$x_1x_2x_3$ to an extension of~$y_3y_2y_1$. Let $P$ be one such squared-tight-path. Then $x_1x_2x_3 P  y_1y_2y_3$ is the required squared-tight-path in~$H$ from~$\mathbf{x}$ to~$\mathbf{y}$ on at most $b+6 \le 1/\eta' + 6 \leq 1/\eta$ vertices. \end{proof}

We are finally ready to prove Lemma~\ref{lma: simpler connecting lemma}.

\begin{proof}[Proof of Lemma~\ref{lma: simpler connecting lemma}]
Let $\mathcal{E} := \{\mathbf{x}_i, \mathbf{y}_i \colon i \in [s]\}$, and for each~$i \in [s]$ let $X_i := (X \cup V(\mathscr{E})) \setminus (V(\mathbf{x}_i) \cup V(\mathbf{y}_i))$. Let $\ell \in [s]$ be maximal with the property that there exists a collection $\mathscr{P} = \{P_1, \dots, P_{\ell}\}$ of vertex-disjoint squared-tight-paths such that for each~$i \in [\ell]$ the path $P_i$ has initial triple $\mathbf{x}_i$, final triple $\mathbf{y}_i$, order $|V(P_i)| \le \psi^{-1/2} $ and  $V(P_i) \cap X_i = \emptyset$. 

Suppose for a contradiction that $\ell < s$. Let $X' := X_{\ell+1} \cup \bigcup_{i \in [\ell]}V(P_i)$, so $|X'| \leq |X| + 6s + \psi^{-1/2} \ell \leq \psi n + 6 \psi n + \sqrt{\psi} n \leq 8 \sqrt{\psi}n$. Also let $H' := H \setminus X'$ and $n' := |V(H')|$, so $\delta_2(H') \ge (7/9 + \alpha)n - 8 \sqrt{\psi} n \ge (7/9 + \alpha/2)n'$.
Lemma~\ref{lma: Connecting Lemma}, with $H', n', \alpha/2, \mathbf{x}_{\ell+1}, \mathbf{y}_{\ell+1}$ and $\sqrt{\psi}$ playing the roles of $H, n, \alpha, \mathbf{x}, \mathbf{y}$ and~$\eta$ respectively, implies that there exists a squared-tight-path $P_{\ell+1}$ in~$H'$ from~$\mathbf{x}_{\ell+1}$ to~$\mathbf{y}_{\ell+1}$ on at most~$\psi^{-1/2}$ vertices. The collection $\mathscr{P} \cup \{P_{\ell+1}\}$ then contradicts the maximality of~$\ell$. We conclude that $\ell = s$, and the collection $\mathscr{P}$ is as desired, since  $|V(\mathscr{P})| \le \psi^{-1/2} s \le \sqrt{\psi} n$.\end{proof}

\section{Absorption} \label{sec: Absorption}
The aim of this section is to prove Lemma~\ref{lma: absorption lemma}. Our approach loosely follows the methods used by Ara\'ujo, Piga and Schacht~\cite{PigaAraujoSchacht} and H\`an, Person and Schacht~\cite{han-person-schacht}, who in turn followed the work of Rödl, Ruciński and Szemerédi (see, e.g.~\cite{RRS3graph, RRSkgraph}).

We first formally define an absorber for our problem. Let $H$ be a $3$-graph on vertex set~$V$ and let $(v_1, v_2, v_3, v_4) \in V^{4}$. A labelled set of~$36$ vertices $A = \{x_i, y_i, z_i, u_{i, j} \colon  i \in [4], \text{ }  j \in [6]\}$ is an \emph{absorber for} $\mathbf{v} := (v_1, v_2, v_3, v_4)$ if the ordered sequences \begin{enumerate}[label={\rm(\roman*)}]
    \item $T_1(A):= x_1\dots x_4 y_1 \dots y_4 z_1 \dots z_4$, \label{itm: K4 absorption}
    \item $T_2(A):= x_1 \dots x_4 z_1 \dots z_4$, \label{K4 no absorption}
    \item $U_{i}(A, \mathbf{v}):= u_{i, 1}u_{i, 2} u_{i, 3} v_i u_{i, 4} u_{i, 5} u_{i, 6}$ \label{6 vertices absorption} for each~$i \in [4]$ and
    \item $U_{i}(A):= u_{i, 1}u_{i, 2} u_{i, 3} y_i u_{i, 4} u_{i, 5} u_{i, 6}$ \label{6 vertices no absorption} for each~$i \in [4]$ 
\end{enumerate} are each squared-tight-paths in~$H$. We say that $A$ is an \emph{absorber} if it is an absorber for some $(v_1, v_2, v_3, v_4) \in V^4$. We also write $f_{A, \mathbf{v}}$ for the function which maps $T_2(A)$ to~$T_1(A)$ and, for each~$i \in [4]$, maps $U_{i}(A)$ to~$U_{i}(A, \mathbf{v})$.

The purpose of this definition is the following. Suppose $A$ is an absorber for~$\mathbf{v} := (v_1, v_2, v_3, v_4)$, where $v_1, v_2, v_3$ and $v_4$ are distinct vertices of~$H$. Then 
$$\mathcal{P}_A := \{T_2(A), U_{1}(A), U_{2}(A), U_{3}(A), U_{4}(A)\}$$ is a collection of five vertex-disjoint squared-tight-paths with vertex set $V(\mathcal{P}_A) = A$.  Moreover, 
$$\mathcal{Q}_{A, \mathbf{v}} := \{f_{A, \mathbf{v}}(P) \colon P \in \mathcal{P}_A \} = \{T_1(A), U_{1}(A, \mathbf{v}), U_{2}(A, \mathbf{v}), U_{3}(A, \mathbf{v}), U_{4}(A, \mathbf{v})\}$$ 
is a collection of five vertex-disjoint squared-tight-paths with vertex set $V(\mathcal{Q}_{A, \mathbf{v}}) = A \cup \{v_1, v_2, v_3, v_4\}$, and each path in~$\mathcal{Q}_{A, \mathbf{v}}$ has the same ends as the corresponding path in~$\mathcal{P}_A$. So the effect of replacing each path $P \in \mathcal{P}_A$ by $f_{A, \mathbf{v}}(P)$ is to `absorb' the vertices $v_1, v_2, v_3$ and~$v_4$ into the squared-tight-paths of the absorber.

We first show that, for each~$(v_1, v_2, v_3, v_4) \in V^4$, a constant proportion of all labelled sets of 36 vertices of~$H$ are absorbers for~$(v_1, v_2, v_3, v_4)$. 
Recall that, for a $3$-graph~$F$, $\gamma(F)$ is the codegree Tur\'an density of~$F$. 

\begin{lemma}\label{lma: many absorbers for each tuple}
Let $1/n \ll c \ll \alpha$, let $H$ be a $3$-graph on $n$ vertices with $\delta_2(H) \ge (7/9+\alpha)n$, and let $v_1$, $v_2$, $v_3$ and $v_4$ be vertices of~$H$. Then there are at least $cn^{36}$ absorbers for~$(v_1, v_2,v_3, v_4)$ in~$H$.  
\end{lemma}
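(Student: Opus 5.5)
The plan is to build an absorber in two independent stages. First we count the $12$-tuples $(x_i,y_i,z_i)_{i\in[4]}$ for which both $T_1(A)$ and $T_2(A)$ are squared-tight-paths. Then, for each $i\in[4]$ separately, we count the $6$-tuples $(u_{i,1},\dots,u_{i,6})$ making both $U_i(A,\mathbf v)$ and $U_i(A)$ squared-tight-paths, given $v_i$ and $y_i$. Multiplying the counts gives $cn^{36}$, after discarding the $O(n^{35})$ tuples with repeated vertices or tuples meeting $\{v_1,\dots,v_4\}$.

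\emph{Stage 1.} Greedy embedding fails here. For instance $z_1$ must lie in the neighbourhoods of the three pairs in $\{x_2,x_3,x_4\}$ (for $T_2$) and the three pairs in $\{y_2,y_3,y_4\}$ (for $T_1$), and $6\cdot 7/9-5<0$. Instead I would use that the whole configuration is homomorphic to $K^3_5$. Colour $x_1\dots x_4$ by $1,2,3,4$, $y_1\dots y_4$ by $5,1,2,3$ and $z_1\dots z_4$ by $5,1,2,3$. Then $T_1$ reads $1234\,5123\,5123$ and $T_2$ reads $1234\,5123$, and one checks that every four consecutive entries in either sequence are distinct. Hence any copy of $K^3_5(4)$ in $H$, with $x_i,y_i,z_i$ placed in the prescribed parts, gives valid $T_1,T_2$. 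By Theorem~\ref{Thm: codegree density of K35} we have $\gamma(K^3_5)\le 0.74<7/9$, so Theorem~\ref{lma: supersaturation, blowup density} yields $\beta\binom{n}{20}$ copies of $K^3_5(4)$. Each $12$-tuple lies in at most $n^{8}$ of these copies, so this gives at least $c_1 n^{12}$ good $12$-tuples.

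\emph{Stage 2.} I would prove an auxiliary claim. For any two vertices $v,y$ there are at least $c_2n^6$ tuples $u_1\dots u_6$ such that both $u_1u_2u_3vu_4u_5u_6$ and $u_1u_2u_3yu_4u_5u_6$ are squared-tight-paths. Unpacking the windows, the requirements are as follows. The four triples $u_1u_2u_3,\,u_2u_3u_4,\,u_3u_4u_5,\,u_4u_5u_6$ must be edges of $H$. The nine pairs $u_au_b$ with $|a-b|\le 2$ together with $u_2u_4,u_3u_5$ must lie in $G:=L(v)\cap L(y)$; this is the pair graph $\{12,13,23,24,34,35,45,46,56\}$. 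Also $\delta(G)>5n/9$. The pair graph is $3$-partite via $a\mapsto a \bmod 3$. So one route is to take a suitable auxiliary $3$-graph $H_G$: all edges of $H$ whose three pairs lie in $G$, and check that this has positive density. One would then find, by supersaturation, many copies of a small tripartite blow-up of a single edge, which contains the required pattern. Density of $H_G$ should follow by counting triangles of $G$ (positive density since $e(G)>\tfrac59\binom n2$) and using codegree $7n/9$ to show a positive fraction of those triangles are edges of $H$. Since only single edges, not $K^3_4$'s, among the $u$'s are needed, a positive-density tripartite $3$-graph suffices, via Erd\H{o}s' theorem through Theorem~\ref{lma: supersaturation, blowup density} with $\pi=0$.

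The main obstacle is the density count in Stage~2. Triangles of $G$ are plentiful, but the edges of $H$ might in principle avoid them. I would first try to bound the number of triangles $abc$ of $G$ with $abc\in E(H)$ by a double-counting argument. For each pair $ab\in G$ the common neighbourhood $N_G(a)\cap N_G(b)\cap N_H(ab)$ has size at least $(5/9+5/9+7/9-2)n<0$, so naive inclusion--exclusion is not enough. I would therefore average over $ab$ and use $\sum_{c}\deg_G(c)$ to show the average is at least a positive constant times $n$. If that fails, I would fall back on choosing $u_3u_4$ first and using the extra freedom from the fact that $\mathbf v$ is arbitrary but $y_i$ varies over a linear-size part of the blow-up from Stage~1.
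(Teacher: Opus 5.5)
The genuine gap is in Stage~2. The architecture there matches the paper: take the auxiliary $3$-graph $H_G$ of edges of $H$ whose three pairs all lie in $G=L(v_i)\cap L(y_i)$ (the paper's ``nice triples''), then use supersaturation to find a $K^3_3(2)$, ordered $a_1b_1c_1a_2b_2c_2$. But you never prove that $H_G$ has positive density, which is the only nontrivial step. The averaging you suggest does not work as stated. Averaging over edges $ab\in G$ only gives $\sum_{ab\in G}|N_G(a)\cap N_G(b)|\ge\sum_c\deg_G(c)^2-n\,e(G)$. That is an average common neighbourhood of about $n/9$, which cannot beat the up to $(2/9-\alpha)n$ vertices missing from $N_H(ab)$. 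The fallback, exploiting the freedom in $y_i$, is not developed into an argument.

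The missing idea is to work with a triangle $a_1a_2a_3$ of $G$ rather than a single edge. By inclusion--exclusion on the three neighbourhoods,
\[
\sum_{jk}|N_G(a_j)\cap N_G(a_k)|\ \ge\ \sum_j\deg_G(a_j)-n\ \ge\ (2/3+3\alpha)n .
\]
So some pair, say $a_1a_2$, has $|N_G(a_1)\cap N_G(a_2)|\ge(2/9+\alpha)n$, and hence $|N_H(a_1a_2)\cap N_G(a_1)\cap N_G(a_2)|\ge 2\alpha n$. Thus every triangle of $G$ shares a pair with at least $2\alpha n$ edges of $H_G$. Now $G$ has $\Omega(n^3)$ triangles, since each edge lies in at least $(1/9+2\alpha)n$ of them. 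Each edge of $H_G$ shares a pair with at most $3n$ triangles, so $e(H_G)=\Omega(\alpha n^3)$. In averaging terms, you must weight pairs by the number of triangles through them, controlling $\sum_{ab}|N_G(a)\cap N_G(b)|^2$ rather than the plain average.

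Stage~1 is correct but heavier than needed. The labelling $1234\,1234\,1234$ already makes every window of both $T_1$ and $T_2$ rainbow. So the paper uses $K^3_4(3)$ with the elementary bound $\gamma(K^3_4)\le 2/3$, and needs no appeal to $\gamma(K^3_5)\le 0.74$.
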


\begin{proof}
Let $\beta=(2c)^{1/5}$.
    By a greedy argument analogous to Fact~\ref{fct: tetrahedral codegree}, we have $\gamma(K_4^3) \le 2/3$. Since $\delta_2(H\setminus \{v_1, v_2, v_3, v_4\}) > 7(n-4)/9$, by Theorem~\ref{lma: supersaturation, blowup density}, there are at least $\beta n^{12}$ copies of~$K_4^3 (3)$ in~$H$ avoiding $v_1, v_2, v_3, v_4$. 
    
    Fix a copy of a $K_4^3(3)$ whose vertex classes are $V_i = \{x_i, y_i, z_i\}$ for each~$i \in [4]$. Note that $x_1\dots x_4 y_1 \dots y_4 z_1 \dots z_4$ and $x_1 \dots x_4 z_1 \dots z_4$ are both squared-tight-paths in~$H$. 

    Fix $i \in [4]$ and consider $L_i = L(v_i) \cap L(y_i) \setminus \{v_1, v_2, v_3, v_4\}$. Note that~$L_i$ is a $2$-graph on $n - 5$ vertices with \begin{equation} \label{eqn: mindegree of L}
        \delta(L_i) \ge 2(7/9 + \alpha)n - n -3\ge (5/9 + \alpha)n.
    \end{equation} This implies that for any edge $a b \in E(L_i)$, we have $$|N_{L_i}(a) \cap N_{L_i}(b)| \ge 2(5/9 + \alpha)n - n \ge (1/9 + 2\alpha)n.$$ The number of triangles in~$L_i$ is therefore at least \begin{equation} \label{eqn: count of triangles in L_i}
        \frac{1}{3} |E(L_i)|\left(\frac{1}{9} + 2\alpha\right)n \ge \frac{5}{81}\binom{n}{3}.
    \end{equation}   

    We say an unordered triple of vertices $w_1, w_2, w_3$ in~$L_i$ is a \emph{nice triple} if $w_1 w_2 w_3$ is a triangle in~$L_i$ as well as $w_1w_2w_3 \in E(H)$. We now show that there is a set of nice triples corresponding to every triangle in~$L_i$. Let $a_1a_2a_3$ be a triangle in~$L_i$. Observe that $$ n \ge \left|\bigcup_{j \in [3]} N_{L_i} (a_j)\right| \ge \sum_{j \in [3]} |N_{L_i}(a_j)| - \sum_{jk \in\binom{[3]}2} |N_{L_i}(a_j) \cap N_{L_i}(a_k)|,$$ implying that $$\sum_{jk \in\binom{[3]}2} |N_{L_i}(a_j) \cap N_{L_i}(a_k)| \stackrel{\eqref{eqn: mindegree of L}}{\ge} 3(5/9 + \alpha)n - n \ge (2/3 + 3\alpha)n.$$ By averaging, we may assume without loss of generality, that $|N_{L_i}(a_1) \cap N_{L_i}(a_2)| \ge (2/9 + \alpha)n$. Using the codegree of~$H$ we deduce that 
    $$|N_H(a_1a_2) \cap N_{L_i}(a_1) \cap N_{L_i}(a_2)| \ge (2/9+\alpha)n - (2/9 - \alpha)n = 2\alpha n.$$
    Therefore, there is a set $B \subseteq V(L_i)$ of at least $2\alpha n$ vertices such that for all $b \in B$, we have $a_1a_2b$ is a triangle in~$L_i$ and $a_1a_2b \in E(H)$. For each triangle in~$L_i$, we thus obtain at least $2\alpha n$ nice triples. Observe that each nice triple can be obtained from at most $3n$ triangles in~$L_i$. So by~\eqref{eqn: count of triangles in L_i} there are at least $$\frac{1}{3n}\left( \frac{5}{81}\binom{n}{3}\right)2\alpha n \ge \frac{\alpha  n^3}{200}$$ nice triples of vertices in~$L_i$. 

    Let $H'$ be an auxiliary $3$-graph on vertex set $V(H)$ whose edges are nice triples of~$L_i$. Since there is a positive density of edges in~$H'$ and the Tur\'an density of an edge is $0$, by Theorem~\ref{lma: supersaturation, blowup density}, there are at least $\beta n^{6}$ $2$-blowups of edges of~$H'$. Observe that a $2$-blowup of an edge in~$H'$ is a set of~$6$ vertices $u_{i, 1}, u_{i, 2}, \dots, u_{i, 6}$ such that $u_{i, 1}\dots u_{i, 6}$ is a squared path in~$L(v_i) \cap L(y_i)$ and furthermore $u_{i, j} u_{i, j+1} u_{i, j+2} \in E(H)$ for all $j \in [4]$. 
    
    By repeating this argument for each~$i \in [4]$, we obtain at least $\beta^5 n^{36}$ choices for an absorber~$A$ for~$(v_1, v_2, v_3, v_4)$. The only way this could fail to be an absorber is if the~$36$ vertices obtained are not all distinct. Removing at most $36^2 n^{35}$ many such sets where the vertices are not distinct, we obtain at least $\beta^5 n^{36} /2 = c n^{36}$ absorbers for~$(v_1, v_2, v_3, v_4)$. This proves the result. \end{proof}

We say two absorbers~$A_1$ and~$A_2$ are \emph{disjoint} if the underlying (unlabelled) sets are disjoint, and otherwise that $A_1$ and $A_2$ \emph{intersect}. In the next lemma we use a random selection to obtain a linear-size family $\F$ of pairwise-disjoint absorbers with the property that every $4$-tuple of vertices of~$H$ has many absorbers in~$\F$. 

\begin{lemma}\label{lma: disjoint absorbers}
 Let~$1/n \ll \beta \ll \alpha$ and let~$H$ be a~$3$-graph on~$n$ vertices with~$\delta_2(H) \ge (7/9 +\alpha)n$. Then there exists a family~$\F$ of at most~$\beta n$ pairwise-disjoint absorbers such that for each~$(v_1, v_2, v_3, v_4) \in V(H)^4$ there are at least~$\beta^2 n$ absorbers for~$(v_1, v_2, v_3, v_4)$ in~$\F$.  
\end{lemma}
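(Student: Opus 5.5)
The plan is the standard random-selection argument of Rödl, Ruciński and Szemerédi. Let $\mathcal{A}$ be the set of all labelled $36$-vertex absorbers in $H$, so $|\mathcal{A}| \le n^{36}$. By Lemma~\ref{lma: many absorbers for each tuple} there is $c$ with $\beta \ll c \ll \alpha$ such that every $\mathbf{v} \in V(H)^4$ has at least $cn^{36}$ absorbers in $\mathcal{A}$; we fix $c$ first and then take $\beta$ small compared with $c$. Form a random family $\F'$ by including each $A \in \mathcal{A}$ independently with probability $p := \beta n^{-35}$. We track three quantities.
\begin{enumerate}[label={\rm(\alph*)}]
\item $|\F'|$ has mean at most $\beta n$. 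By Chernoff's inequality (Lemma~\ref{Lma: Chernoff}), or simply Markov's inequality (Lemma~\ref{lma: Markov}) with threshold $2\beta n/3$ after a slight rescaling of $p$, we get $|\F'| \le \beta n$ with probability bounded away from $0$; the lower-order events below fit alongside this.
\item For each $\mathbf{v}$, the number $X_{\mathbf{v}}$ of absorbers for $\mathbf{v}$ in $\F'$ is binomial with mean at least $c\beta n$. Chernoff gives $X_{\mathbf{v}} \ge c\beta n/2$ except with probability $e^{-\Omega(n)}$, and a union bound over the $n^4$ tuples handles all $\mathbf{v}$ simultaneously.
\item The number $Y$ of intersecting pairs in $\F'$ satisfies $Y \le 36^2 n^{71}$ pairs times $p^2$, so $\mathbb{E}Y \le 36^2\beta^2 n$. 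By Markov, $Y \le 2\cdot 36^2 \beta^2 n$ with probability at least $1/2$.
\end{enumerate}
To control $|\F'|$ precisely, I will apply Markov with threshold $\beta n/2$ after replacing $p$ by $\beta n^{-35}/4$, which gives $|\F'| \le \beta n/2$ with probability at least $1/2$. Combining the three events via a union bound, with care taken so that the two Markov events together fail with probability strictly less than $1$ (for example, threshold $3\mathbb{E}$ for each so that each fails with probability at most $1/3$), some outcome satisfies all of them simultaneously.

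Fix such an $\F'$. Delete one absorber from each intersecting pair; the resulting family $\F$ is pairwise disjoint, has size at most $\beta n$, and each $\mathbf{v}$ retains at least $c\beta n/8 - O(\beta^2 n)$ absorbers (after the constant rescaling of $p$). This is at least $\beta^2 n$ because $\beta \ll c$. The statement only requires $\beta \ll \alpha$, and $c$ depends only on $\alpha$, so choosing $\beta \ll c$ is legitimate.

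The main obstacle is bookkeeping the constants: the number of absorbers surviving per tuple is of order $c\beta n$, while the number of deletions is of order $\beta^2 n$, and this only works because $\beta \ll c$. I also need to check that tuples $\mathbf{v}$ with repeated entries are covered. Lemma~\ref{lma: many absorbers for each tuple} already applies to arbitrary tuples, so nothing extra is needed there.
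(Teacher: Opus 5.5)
Your proposal is correct and follows essentially the same random-selection argument as the paper. The paper samples labelled $36$-sets with probability of order $\beta n^{-35}$, uses Chernoff plus a union bound over $V(H)^4$ for the per-tuple counts, and Markov for the intersecting pairs, with $\beta \ll c$ ensuring $c\beta n - O(\beta^2 n) \ge \beta^2 n$. Your deviations are cosmetic: you sample only from absorbers rather than all labelled $36$-sets, delete one rather than both members of each intersecting pair, and use Markov rather than Chernoff for $|\F'|$ (your final choice $p=\beta n^{-35}/4$ with thresholds $3\mathbb{E}$ makes the union of failure events have probability less than $1$).
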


\begin{proof}
Let $1/n \ll \beta \ll c \ll \alpha$.
    For each $4$-tuple of vertices~$\textbf{v} = (v_1, \dots. v_4) \in V(H)^4$, let~$\mathcal{A}(\textbf{v})$ denote the set of absorbers for~$\textbf{v}$, so $|\mathcal{A}(\textbf{v})| \ge cn^{36}$ by Lemma~\ref{lma: many absorbers for each tuple}. Choose a family~$\F'$ of labelled sets of size $36$ by selecting each of the~$\binom{n}{36} 36!$ labelled sets with probability~$p = \beta/(2n^{35})$, independently of all other choices. Then $|\F'|$ is a binomial random variable with expectation $$\mathbb{E}|\F'| = p \binom{n}{36} 36! \le \frac{\beta n}{2},$$ 
    so by Chernoff's inequality (Lemma~\ref{Lma: Chernoff}) we have $|\F'| \le \beta n$ with probability at least $3/4$. Similarly, for each~$\textbf{v} \in V(H)^4$ the quantity $|\mathcal{A}(\mathbf{v})\cap \F'|$ is a binomial random variable with $$\mathbb{E}|\mathcal{A}(\textbf{v}) \cap \F'| \ge c n^{36} p =\frac{ c \beta n }{2} .$$ 
    So by applying Chernoff's inequality (Lemma~\ref{Lma: Chernoff}) and taking a union bound over all $\textbf{v} \in V(H)^4$, with probability at least 3/4 it holds that for every $\textbf{v} \in V(H)^4$ we have $|\mathcal{A}(\textbf{v}) \cap \F'| \ge c \beta n /4$.
    Finally, the expected number of pairs of absorbers that intersect is at most~$n^{36} 36^2 n^{35} p^2  = 36^2\beta^2 n/ 4.$
So by Markov's inequality (Lemma~\ref{lma: Markov}), with probability at least~$3/4$ it holds that   
\begin{equation*}
\F' \text{ contains at most } 36^2\beta^2 n \text{ pairs of intersecting absorbers.} 
\end{equation*}

Fix a choice of~$\F'$ for which each of these three events occurs, and let~$\F$ be the subfamily of~$\F'$ formed by deleting from~$\F'$ both members of each intersecting pair of absorbers and also each~$F \in \F'$ which is not an absorber. 
Then $\F$ is a family of at most $\beta n$ pairwise-disjoint absorbers with the property that~$|\mathcal{A}(\textbf{v}) \cap \F| \ge  c \beta n /4 - 2 (36\beta)^2 n \ge \beta^2 n \text{ for each }\textbf{v} \in V(H)^4.$
\end{proof}

By choosing a family of absorbers as in Lemma~\ref{lma: disjoint absorbers} we obtain a collection of pairwise vertex-disjoint squared-tight-paths which can absorb any small set of vertices whose size is a multiple of four (this can be seen by following the proof of the next lemma, ignoring the path~$\hat{P}$ and taking $L' = L$). To be able to absorb sets whose size is not a multiple of four, we also use a copy of~$K^3_5(4)$ in~$H$, which may `donate' up  to three vertices to the set we want to absorb. To be more specific we introduce the following notation: for a vertex sequence $P = v_1 v_2 \dots v_{\ell}$ and a subset $S \subseteq V(P)$, we write $P\setminus S$ to denote the sequence of vertices of~$V(P) \setminus S$ in the same order as in~$P$. Observe that if a copy of~$K^3_5(4)$ in~$H$ has vertex classes $\{a_i, b_i, c_i, d_i\}$ for each~$i \in [4]$, then $P := a_1\dots a_5 b_1 \dots b_5 c_1 \dots c_5 d_1 \dots d_5$ is a squared-tight-path in~$H$ with the property that, for any subset $S \subseteq \{a_5, b_5, c_5\}$, $P \setminus S$ is a squared-tight-path in~$H$ with the same ends as $P$.

\begin{lemma} \label{lma: disconnected absorption lemma}
    Let~$1/n \ll \beta  \ll \alpha$. Let~$H$ be a~$3$-graph on~$n$ vertices with~$\delta_2(H) \ge (7/9 + \alpha)n$. Then there exists a collection $\mathcal{P}$ of pairwise vertex-disjoint squared-tight-paths in~$H$ with $|\mathcal{P}| \leq 5\beta n+1$, with $|V(\mathcal{P})| \le 40\beta n$, and with the following absorbing property. For all sets $L \subseteq V(H) \setminus V(\mathcal{P})$ with $|L| \le \beta^2 n$, there exists a collection $\mathcal{Q}$ of pairwise vertex-disjoint squared-tight-paths in~$H$ with $V(\mathcal{Q}) = V(\mathcal{P}) \cup L$ and a bijection $f: \mathcal{P} \to \mathcal{Q}$ such that for each~$P \in \mathcal{P}$ the paths $P$ and $f(P)$ have the same ends.\end{lemma}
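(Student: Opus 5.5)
We take $\F$ from Lemma~\ref{lma: disjoint absorbers} and a single copy of $K^3_5(4)$ avoiding it, and let $\mathcal{P}$ consist of the five paths of each absorber together with one donor path. The construction of $\mathcal{P}$ is immediate; the only real work is in the absorbing procedure, and even there the remaining steps are bookkeeping once the donor and the greedy assignment are in place.

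First apply Lemma~\ref{lma: disjoint absorbers} (with the same $\beta$) to obtain a family $\F$ of at most $\beta n$ pairwise-disjoint absorbers such that every $\mathbf{v}\in V(H)^4$ has at least $\beta^2 n$ absorbers in $\F$. Next, $\delta_2(H\setminus V(\F))\ge (7/9+\alpha/2)(n-36\beta n)$, and $7/9 > 0.74\ge\gamma(K_5^3)$ by Theorem~\ref{Thm: codegree density of K35}. So Theorem~\ref{lma: supersaturation, blowup density} yields a copy of $K_5^3(4)$ in $H\setminus V(\F)$, with vertex classes $\{a_i,b_i,c_i,d_i\}$ for $i\in[5]$. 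As observed before the statement, it gives the squared-tight-path
\[\hat P := a_1\dots a_5b_1\dots b_5c_1\dots c_5d_1\dots d_5,\]
which has the property that $\hat P\setminus S$ is a squared-tight-path with the same ends for every $S\subseteq\{a_5,b_5,c_5\}$.

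Set $\mathcal{P}:=\{\hat P\}\cup\bigcup_{A\in\F}\mathcal{P}_A$. These paths are pairwise vertex-disjoint and $|\mathcal{P}|\le 5\beta n+1$. For the vertex count, $|V(\mathcal{P})|\le 36\beta n+20\le 40\beta n$.

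Now let $L\subseteq V(H)\setminus V(\mathcal{P})$ with $|L|\le\beta^2 n$. Choose $S\subseteq\{a_5,b_5,c_5\}$ with $|S|\equiv -|L| \pmod 4$, and put $L':=L\cup S$. Then $4$ divides $|L'|$ and $|L'|\le\beta^2 n+3$. Partition $L'$ arbitrarily into $q:=|L'|/4$ ordered $4$-tuples $\mathbf{v}^1,\dots,\mathbf{v}^q$ of distinct vertices.

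Assign absorbers greedily: for $j=1,\dots,q$, pick an absorber $A_j\in\F$ for $\mathbf{v}^j$ distinct from $A_1,\dots,A_{j-1}$. This is the only counting step that needs care. At step $j$, at most $q-1\le \beta^2n/4$ absorbers are already used, while $\mathbf{v}^j$ has at least $\beta^2 n$ absorbers in $\F$, so a choice always exists. Because the absorbers are disjoint and $L'\cap V(\F)=\emptyset$, each $A_j$ is an absorber for a tuple of distinct vertices lying outside all absorbers.

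Finally define $f$ and $\mathcal{Q}$:
\begin{itemize}
\item for $P\in\mathcal{P}_{A_j}$, set $f(P):=f_{A_j,\mathbf{v}^j}(P)$;
\item set $f(\hat P):=\hat P\setminus S$;
\item set $f(P):=P$ for all remaining $P\in\mathcal{P}$.
\end{itemize}
Let $\mathcal{Q}:=f(\mathcal{P})$. Each $f(P)$ has the same ends as $P$, by the discussion of $\mathcal{Q}_{A,\mathbf{v}}$ and by the donor property of $\hat P$. The paths in $\mathcal{Q}$ are pairwise vertex-disjoint, because the vertex sets $A_j\cup V(\mathbf{v}^j)$, the remaining absorbers, and $V(\hat P)\setminus S$ are pairwise disjoint. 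The vertex set is
\[V(\mathcal{Q}) = \bigl(V(\mathcal{P})\setminus S\bigr)\cup L' = V(\mathcal{P})\cup L,\]
as required.
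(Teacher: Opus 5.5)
Your proposal is correct and follows the paper's proof essentially step for step. It uses the same family $\F$ from Lemma~\ref{lma: disjoint absorbers}, the same donor path $\hat P$ from a copy of $K_5^3(4)$ in $H\setminus V(\F)$, the same padding of $L$ by up to three of $a_5,b_5,c_5$, and the same greedy choice of distinct absorbers; your direct use of the codegree form of Theorem~\ref{lma: supersaturation, blowup density} (with $\gamma(K_5^3)\le 0.74<7/9$) is, if anything, a cleaner way to justify the copy of $K_5^3(4)$ than the paper's edge count.
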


\begin{proof}
Apply Lemma~\ref{lma: disjoint absorbers} to obtain a a family~$\F$ of at most~$\beta n$ pairwise-disjoint absorbers such that for each~$\mathbf{v} \in V(H)^4$ there are at least~$\beta^2 n$ absorbers for~$\mathbf{v}$ in~$\F$, and so that the set $V(\F)$ of vertices covered by $\F$ has $|V(\F)| \leq 36\beta n$.
The latter bound implies that $\delta_2(H \setminus  V(\F)) \ge (7/9 + \alpha/2)n$, so $H \setminus V(\F)$ has at least $$\frac{1}{3} \cdot \binom{n-40\beta n}{2} \cdot \left(\frac{7}{9} + \frac{\alpha}{2}\right)n \geq \frac{7}{9}\binom{n}{3}$$ edges. By Theorem~\ref{Thm: codegree density of K35}, we have $\gamma(K_5^3(4)) = \gamma(K_5^3) \le 0.74$. So by Theorem~\ref{lma: supersaturation, blowup density} there must exist a copy of~$K_5^3 (4)$ in~$H \setminus \F$, say with vertex classes $\{a_k b_k c_k d_k \}$ for each~${k\in [5]}$. Set 
\begin{align*}
    \hat{P} := a_1\dots a_5 b_1 \dots b_5 c_1 \dots c_5 d_1 \dots d_5
\qquad \text{and} \qquad
\mathcal{P} := \{\hat{P}\} \cup \{\mathcal{P}_A \colon A \in \F\}. 
\end{align*}
 
Note that $|\mathcal{P}| = 5|\F| + 1 \leq 5 \beta n +1$ and $|V(\mathcal{P})| \le |V(\F)| + \left|V\left(K_5^3(4)\right)\right| \le 36\beta n+20 \le 40\beta n$. So it remains only to show that $\mathcal{P}$ has the claimed absorbing property.
 
 Consider any set~$L \subseteq V(H) \setminus V(\mathcal{P})$ with~$|L| \le \beta^2 n$. By possibly adding vertices from~$\{a_5, b_5, c_5\}$ to~$L$, form a set $L'$ with $L \subseteq L' \subseteq L \cup \{a_5, b_5, c_5\}$ such that $|L'|$ is divisible by four. Set $f(\hat{P}) = \hat{P} \setminus L'$, and observe that $f(\hat{P})$ is then a squared-tight-path with the same ends as~$\hat{P}$. Next, arbitrarily partition $L'$ into vertex-disjoint ordered $4$-tuples of distinct vertices $\mathbf{v}_1, \dots, \mathbf{v}_{t} \in V(H)^4$, and write $\mathcal{L} = \{\mathbf{v}_1, \dots, \mathbf{v}_t\}$. The number of these 4-tuples is $|\mathcal{L}| = |L'|/4 \le (\beta^2n+3)/4 \leq \beta^2 n$, and each has at least $\beta^2 n$ absorbers in~$\F$, so we may greedily choose for each~$\mathbf{v} \in \mathcal{L}$ a unique $A_\mathbf{v} \in \F$ which is an absorber for~$\mathbf{v}$. 

Let $\F_\mathcal{L} := \{A_\mathbf{v} \colon \mathbf{v} \in \mathcal{L}\}$ and let $\F_\mathcal{L}^c := \F \setminus \F_\mathcal{L}$. For each~$A \in \F_\mathcal{L}$ set $f(P) = f_{A, \mathbf{v}}(P)$ for each~$P \in \mathcal{P}_A$, and for each~$A \in \F_\mathcal{L}^c$ instead set $f(P) = P$ for each~$P \in \mathcal{P}_A$. So for each~$P \in \mathcal{P}$ the image $f(P)$ is then a squared-tight-path with the same ends as~$P$. Set $\mathcal{Q} := \{f(P) \colon P \in \mathcal{P}\}$, and observe that $V(\mathcal{Q}) = V(\mathcal{P}) \cup L$, as desired.
\end{proof}
 
Finally, we prove Lemma~\ref{lma: absorption lemma}  by connecting the vertex-disjoint squared-tight-paths obtained from Lemma~\ref{lma: disconnected absorption lemma} into a single squared-tight-path. 

\begin{proof}[Proof of Lemma~\ref{lma: absorption lemma}]
Let $\mathcal{P}$ be a collection of vertex-disjoint squared-tight-paths with the properties stated in Lemma~\ref{lma: disconnected absorption lemma}, and write $\mathcal{P} = \{P_1, \dots, P_N\}$. So in particular $N \leq 5\beta n+1$ and $|V(\mathcal{P})| \leq 40 \beta n$.
    For each~$i \in [N]$, let $\mathbf{y}_i$ be the initial triple of~$P_i$ and let $\mathbf{x}_i$ be the final triple of~$P_i$. Apply Lemma~\ref{lma: simpler connecting lemma} with $40\beta, N-1, \mathbf{y}_{i+1}$ for each~$i \in [N-1]$ playing the roles of~$\psi, s, \mathbf{y}_i$ for each~$i\in [s]$ respectively. This gives a collection $\mathcal{P}' = \{P_1', \dots, P_{N-1}'\}$ of vertex-disjoint squared-tight-paths in~$H \setminus X$ such that for each~$i \in [N-1]$ the path $P_i'$ has initial triple $\mathbf{x}_i$ and final triple $\mathbf{y}_{i+1}$, and also so that $|V(\mathcal{P}')| \le \sqrt{40\beta}n$. Set $P = P_1 P_1'P_2P_2'\dots P_{N-1}P_{N-1}'P_N$, so $|V(P)| \le 40\beta n + \sqrt{40\beta}n \le 7\sqrt{\beta} n$.

    Now consider any set $L$ of at most $\beta^2 n$ vertices of~$V(H) \setminus V(P)$. By the absorbing property of~$\mathcal{P}$ there exists a collection $\mathcal{Q}$ of pairwise vertex-disjoint squared-tight-paths in~$H$ with $V(\mathcal{Q}) = V(\mathcal{P}) \cup L$ and a bijection $f: \mathcal{P} \to \mathcal{Q}$ such that for each~$P \in \mathcal{P}$ the paths $P$ and $f(P)$ have the same ends. For each~$i \in [N]$ set $Q_i = f(P_i)$, and set $P' = Q_1P_1'Q_2P_2'\dots Q_{N-1}P_{N-1}' Q_N$. Then $P'$ is a squared-tight-path in~$H$ with $V(P') = V(P) \cup L$ and such that $P$ and $P'$ have the same ends.\end{proof}

\section{Path Cover Lemma} \label{sec: Path Cover Lemma}

In this section, we prove the Path Cover Lemma (Lemma~\ref{lma: path cover lemma}). For this we make use of the following weakened version of a theorem by Keevash and Mycroft~\cite{KeevashMycroft}. For constants $1/n \ll 1/\ell \ll \gamma  \ll \beta \ll 1/k$, the full statement of the result assumes instead the weaker edge-containment condition that for each~$i \in [k-1]$, each~$e \in \J^{(i)}$ is contained in at least $(\frac{k-i}{k} - \gamma)n$ edges of~$\J^{(i+1)}$. The conclusion is then that either $\J^{(k)}$ has a matching covering all but at most $\ell$ vertices, or that there is a subset~$S$ of~$V(\J)$ with $|S| = \lfloor jn/k \rfloor$ for some $j \in [k-1]$ for which at most $\beta n^k$ edges of~$\J^{(k)}$ contain more than $j$ vertices of~$S$. The latter outcome is referred to as a `space barrier', but a straightforward counting argument shows this cannot occur under the stronger edge-containment assumption of the version of the theorem stated below.

\begin{theorem}[{\cite[Theorem 2.4]{KeevashMycroft}}] \label{thm: almost tiling Keevash-Mycroft}
    Let $1/n \ll 1/\ell \ll \alpha \ll 1/k$ and let $\J$ be a $k$-complex on~$n$ vertices with $\J^{(1)} = \{\{v\} \colon v \in V(\J)\}$. If for each~$i \in [k-1]$ each edge $e \in \J^{(i)}$ is contained in at least $(\frac{k-i}{k} + \alpha)n$ edges of~$\J^{(i+1)}$, then $\J^{(k)}$ contains a matching that covers all but at most $\ell$ vertices of~$\J$.  
\end{theorem}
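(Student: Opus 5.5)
The plan is to deduce this statement from the full version of \cite[Theorem 2.4]{KeevashMycroft} described just before it, by showing that under our stronger edge-containment hypothesis the space-barrier outcome cannot occur. First fix the constants. Given $k$ and $\alpha$, choose $\beta$ with $\beta \le \alpha^k/(2k \cdot k!)$ and also small enough in terms of $1/k$ for the full theorem to apply. Then choose $\gamma \ll \beta$ and $1/\ell \ll \gamma$ as the full theorem requires. Our hypothesis that each $e \in \J^{(i)}$ lies in at least $(\frac{k-i}{k}+\alpha)n$ edges of $\J^{(i+1)}$ trivially implies the weaker condition with $-\gamma$ in place of $+\alpha$. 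So the full theorem leaves two possibilities: either $\J^{(k)}$ has a matching covering all but at most $\ell$ vertices (the conclusion we want), or there is a space barrier. A space barrier is a set $S \subseteq V(\J)$ with $|S| = \lfloor jn/k\rfloor$ for some $j \in [k-1]$ such that at most $\beta n^k$ edges of $\J^{(k)}$ contain more than $j$ vertices of $S$.

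To rule out the space barrier, I would count edges of $\J^{(k)}$ meeting $S$ in at least $j+1$ vertices, building them vertex by vertex along the complex. Start with any $v_1 \in S$; here $\{v_1\} \in \J^{(1)}$ by assumption, and there are $|S| \ge n/k - 1 \ge \alpha n$ choices. Suppose we have an edge $e_i = \{v_1,\dots,v_i\} \in \J^{(i)}$ with all vertices in $S$ and $i \le j$. It has at least $(\frac{k-i}{k}+\alpha)n$ extensions to $\J^{(i+1)}$. At most $n - |S| \le \frac{k-j}{k}n + 1$ of these add a vertex outside $S$. Hence at least $(\frac{j-i}{k} + \alpha)n - 1 \ge \alpha n/2$ extensions add a vertex $v_{i+1}\in S$. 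Repeating this up to $i = j$ produces $j+1$ vertices in $S$. For the remaining steps, up to size $k$, we use only that each $e \in \J^{(i)}$ has at least $(\frac1k+\alpha)n \ge \alpha n$ extensions, with no restriction on where the new vertex lies.

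This yields at least $(\alpha n/2)^k$ ordered sequences $(v_1,\dots,v_k)$ whose underlying set is an edge of $\J^{(k)}$ with at least $j+1$ vertices in $S$. Each such edge arises from at most $k!$ orderings, so there are at least $\alpha^k n^k/(2^k k!)$ such edges. By the choice of $\beta$ (after adjusting the constant $2^k$ into the bound on $\beta$), this exceeds $\beta n^k$, contradicting the space barrier. Therefore the matching outcome must hold.

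The only delicate point is the bookkeeping of the constant hierarchy. The full theorem's hierarchy places $\beta$ (and $\gamma$) below $1/k$, while our counting needs $\beta$ below a function of $\alpha$. This is consistent because $\alpha \ll 1/k$ and we are free to take $\beta$ as small as we like before choosing $\gamma$ and $\ell$. The remaining details are routine: the $\pm 1$ from the floor in $|S|$, and the requirement $n$ large. Both are absorbed since $1/n \ll \alpha$.
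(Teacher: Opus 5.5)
Your proposal is correct and follows the route the paper indicates: apply the full Keevash--Mycroft theorem under the weaker $(\frac{k-i}{k}-\gamma)n$ hypothesis, then rule out the space barrier by a greedy count of edges meeting $S$ in at least $j+1$ vertices, which you carry out correctly. The only tidy-up is to fix $\beta < \alpha^k/(2^k k!)$ from the outset, since your initial bound $\alpha^k/(2k\cdot k!)$ is too weak for $k\ge 3$, as you already note.
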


We will apply Theorem~\ref{thm: almost tiling Keevash-Mycroft} with $k=4$ in the proof of the following lemma, showing that every $3$-graph $H$ on $n$ vertices with minimum codegree at least $3n/4+o(n)$ admits an almost-spanning collection of pairwise vertex-disjoint squared-tight-paths.

\begin{lemma}[Path Tiling Lemma] \label{lma: path tiling lemma}
Let $1/n \ll 1/L \ll  \gamma, \alpha \le 1$, and let $H$ be a $3$-graph on $n$ vertices with $\delta_2(H) \ge (3/4 + \alpha)n$. Then there exists a collection of at most $L$ pairwise vertex-disjoint squared-tight-paths in~$H$ which collectively cover all but at most~$\gamma n$~vertices~of~$H$.
\end{lemma}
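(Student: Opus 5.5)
Rather than tile $H$ directly, I will apply the regular slice machinery of Section~\ref{sec: Hypergraph Regularity} and find an almost perfect $K^3_4$-tiling of a reduced $3$-graph using Theorem~\ref{thm: almost tiling Keevash-Mycroft} with $k=4$. Each reduced tetrahedron then yields squared-tight-paths filling almost all of its four clusters.

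\textbf{Step 1 (regularity).} Choose constants $1/n \ll 1/t_1 \ll 1/t_0 \ll 1/r,\eps \ll \eps_3 \ll d_2, d_3 \ll \theta \ll \gamma,\alpha$, with $L$ chosen large relative to $t_1$. After deleting at most $t_1!$ vertices, apply Lemma~\ref{lem: regular slice} to obtain a regular slice $\J$ with $t$ clusters. Let $R = R_{d_3}(H)$. By Proposition~\ref{Prop: almost codegree in the reduced graph}, $R$ is $(3/4+\alpha/2,\theta)$-dense. Lemma~\ref{lma: find a strongly dense subgraph} then gives a subgraph $R'$ that is strongly $(3/4+\alpha/3,2\theta^{1/4})$-dense.

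\textbf{Step 2 (clean-up and tiling).} The codegree bound fails only for the at most $2\theta^{1/4}\binom{t}{2}$ exceptional pairs. Delete every vertex of $R'$ lying in more than $\theta^{1/8}t$ exceptional pairs; this removes at most $4\theta^{1/8}t$ vertices. In what remains, delete every edge containing an exceptional pair. Let $\mathcal{K}$ be the $4$-complex on the remaining vertex set $V'$ consisting of the vertices, the non-exceptional pairs, the remaining edges of $R'$, and the copies of $K^3_4$ formed by them. Check the degree conditions of Theorem~\ref{thm: almost tiling Keevash-Mycroft}:
\begin{itemize}
\item A vertex lies in at least $(1-\theta^{1/8})|V'| \ge (3/4+\alpha')|V'|$ good pairs.
\item A good pair has codegree at least $(3/4+\alpha/4)t$ minus losses, so it lies in at least $(2/4+\alpha')|V'|$ edges.
\item For an edge $e$, each of its three pairs has codegree at least $(3/4+\alpha/4)t$. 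Hence at least $(3\cdot 3/4 - 2 + 3\alpha/4)t - o(t) \ge (1/4+\alpha')|V'|$ vertices $u$ have $e\cup\{u\}$ spanning a $K^3_4$.
\end{itemize}
Theorem~\ref{thm: almost tiling Keevash-Mycroft} therefore gives a $K^3_4$-tiling of $R'$ covering all but at most $\ell + 4\theta^{1/8}t \le \gamma t/2$ clusters.

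\textbf{Step 3 (embedding).} For each tetrahedron $\{i_1,i_2,i_3,i_4\}$ of the tiling, the four triples of clusters are regular and dense. So the restricted complex on $V_{i_1}\cup\dots\cup V_{i_4}$ is $(\mathbf{d},\eps_3,\eps,r)$-regular, with density at least $d_3$ on each triple. I apply Lemma~\ref{lma: Embedding Lemma} repeatedly to embed a squared-tight-path on $4cm$ vertices whose $j$-th vertex goes to $V_{i_{j \bmod 4}}$. Such a path is $4$-partite with bounded vertex degree, and the blowup of a tetrahedron respects its partition. Remove the embedded vertices and repeat. By Lemma~\ref{lma: regular restriction}, the remainder stays regular as long as each cluster keeps at least $(\gamma/4)m$ vertices. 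Since each path removes equally many vertices from each cluster, about $(1-\gamma/4)/(c)$ paths per tetrahedron suffice. The total number of paths is at most $t_1/c \le L$. The uncovered vertices are the leftovers in tiled clusters, the clusters outside the tiling, and the deleted vertices, at most $\gamma n$ in total.

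\textbf{Main obstacle.} I expect Step 2 to be the hardest part: turning the almost-codegree condition on $R$ into the \emph{exact} degree hypotheses of Theorem~\ref{thm: almost tiling Keevash-Mycroft}. In particular, I must make sure that the $K^3_4$-extension count at level $3$ still exceeds $1/4$ after removing exceptional pairs and vertices. If the strong-density clean-up is insufficient, my fallback is to use Lemma~\ref{Lma: min codeg} to pass to a random subset of clusters of bounded size with genuine minimum codegree, and iterate.
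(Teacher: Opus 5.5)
Your proposal is correct and follows essentially the same route as the paper: regular slice, a strongly dense reduced graph $R'$, deletion of the few clusters lying in many pairs outside $\partial R'$, Theorem~\ref{thm: almost tiling Keevash-Mycroft} with $k=4$ applied to the complex of vertices, pairs, edges and tetrahedra, and then repeated use of Lemmas~\ref{lma: regular restriction} and~\ref{lma: Embedding Lemma} inside each reduced tetrahedron to peel off squared-tight-paths using equally many vertices of each cluster. The only difference is cosmetic: deleting edges that contain an exceptional pair removes nothing, since strong density already gives every pair inside an edge of $R'$ large codegree, so the obstacle you anticipate in Step~2 does not arise and the fallback via Lemma~\ref{Lma: min codeg} is unnecessary.
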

\begin{proof}
Introduce new constants with $$1/n \ll1/L \ll 1/t_1 \ll 1/t_0 \ll 1/T, 1/r, \eps \ll c \ll \eps_3, d_2 \ll d_3 \ll \beta \ll \theta\ll \gamma, \alpha.$$
    Arbitrarily delete at most $t_1!$ vertices from~$H$ to obtain a subgraph $H'$ on $n'$ vertices where~$t_1!$ divides~$n'$. We then have $\delta_2(H') \ge (3/4 + \alpha)n - t_1! \ge (3/4 + 5\alpha/6)n'$. Apply Lemma~\ref{lem: regular slice}, with $H', n'$ and~$3$ playing the roles of~$G, n$ and $k$ respectively, to obtain a $(\cdot, \cdot, \eps, \eps_3, r)$-regular slice $\J$ for~$H'$ with clusters $V_1, \dots, V_t$, where $t_0 \le t \le t_1$, and with density parameter~$d_2$. Let $R := R_{d_3}(H')$; recall that $V(R) = [t]$ and that vertex $i \in [t]$ corresponds to the cluster $V_i$ of~$\J$. Let $m$ be the number of vertices in each cluster, so $mt = n'$.  Proposition~\ref{Prop: almost codegree in the reduced graph}, with $H', 5\alpha/6, 3$ and $\theta$ playing the roles of~$H, \alpha, \gamma$ and $\beta$ respectively, implies that~$R$ is $(3/4 + 5\alpha/12, \theta)$-dense. By Lemma~\ref{lma: find a strongly dense subgraph}, with $R, t$ and $3/4 + 5\alpha/12$ playing the roles of~$H, n$ and $\mu$ respectively, there is a subgraph $R'$ of~$R$ that is strongly $(3/4 + \alpha/3, 2\theta^{1/4})$-dense, and which also satisfies$|E(R-R')| \le 8\sqrt{\theta}\binom{t}{3}$. 

Let $U = \{i \in V(R') \colon  \deg_{\partial R'}(i) < (1-\theta^{1/8})t\}$. Since $|\partial R'| \ge (1-2\theta^{1/4})\binom{t}{2}$, we have $|U| \le 2\theta^{1/8}t$. Let $R^* = R' \setminus U$ and $t' = |V(R^*)| \ge (1-2\theta^{1/8})t$. For any pair of vertices $uv \in \binom{V(R^*)}{2}$ such that $\deg_{R^*}(uv) > 0$, we have 
\begin{align*}
    \deg_{R^*}(uv) \ge \deg_{R'}(uv) - |U| \ge (3/4 + \alpha/3)t - 2\theta^{1/8}t \ge (3/4 + \alpha/4)t'.
\end{align*}
Note that \begin{align*}
    \left|\binom{V(R^*)}{2}\setminus \partial R^* \right| \le \left|\binom{V(R')}{2} \setminus \partial R' \right| \le 2\theta^{1/4}\binom{t}{2} \le 4\theta^{1/4}\binom{t'}{2}.
\end{align*}
By the above, we deduce that $R^*$ is strongly $(3/4 + \alpha/4, 4\theta^{1/4})$-dense and we have \begin{align} \label{eqn: vtx degree in R^*}
    \delta(\partial R^*) \ge (1-4\theta^{1/4})t' \ge (3/4 + \eps)t'.
\end{align}
Define a $4$-complex $\C$ with $V(\C) = V(R^*)$ in which 
\begin{align*}
    \C^{(1)} &= \{ \{i\} \colon i \in V(R^*) \}, &
    \C^{(2)} &= \partial R^*, &
    \C^{(3)} &= E(R^*), &
    \C^{(4)} &= E(\T(R^*)).
\end{align*}

\begin{claim}
For each~$i \in [3]$ each edge $e \in \C^{(i)}$ is contained in at least $(\frac{4-i}{4} + \eps)t'$ edges of~$\C^{(i+1)}$.
\end{claim}

\begin{proofclaim}
for~$i=1$ the claimed property holds by~\eqref{eqn: vtx degree in R^*}. The fact that $R^*$ is strongly dense implies that each~$ij \in \partial R^*$ is contained in at least $(3/4 + \alpha/4)t'$ edges of~$R^*$. This gives the claimed property for~$i=2$, and also implies that for each edge $ijk \in \C^{(3)}$, each pair of vertices from~$ijk$ has at most $t' - (3/4 + \alpha/4)t' \le (1/4 - \alpha/4)t'$ non-neighbours in~$V(\C)$. It follows that the number of options to choose a vertex $\ell$ such that $ijk\ell \in E(\T(R^*))$ is at least $t' - 3(1/4 - \alpha/4)t' = (1/4 + 3\alpha/4)t' \ge (1/4 + \eps)t'$, giving the claimed property for~$i=3$ also. 
\end{proofclaim}

By Theorem~\ref{thm: almost tiling Keevash-Mycroft}, with $\C, \eps$ and $T$ playing the roles of~$\J, \alpha$ and $\ell$ respectively, the $4$-graph~$\C^{(4)}$ contains a matching covering all but at most $T$ vertices of~$\C$. This is a $K_4^3$-tiling $\mathscr{K}$ in~$R$ covering all but at most $2\theta^{1/8}t + T \le 3\theta^{1/8}t$ vertices of~$R$.   

\begin{claim} \label{clm: path tiling in each tetrahedron}
    Let $i_1, i_2, i_3, i_4$ be the vertices of a copy of~$K_4^3$ in~$R$. Then there is a collection $\mathcal{P}$ of at most $1/c$ pairwise vertex-disjoint squared-tight-paths in~$H[\bigcup_{j \in [4]} V_{i_j}]$ which covers all but at most $\beta m$ vertices of each of~$V_{i_1}, V_{i_2}, V_{i_3}$ and $V_{i_4}$.
\end{claim}

\begin{proofclaim}
 Let~$\cG^{(3)}$ be a squared-tight-path~$v_1v_2...v_{4cm-1} v_{4cm}$ where~$v_k \in V_{i_j}$ for all $k \in [4cm]$ such that~$k \equiv j \pmod{4}$. Let~$\cG$ be the down-closure of~$\cG^{(3)}$, so~$\cG$ is a~$4$-partite~$3$-complex with vertex classes~$X_1, \dots, X_4$ where~$X_{j} \subseteq V_{i_j} \text{ and } |X_j| = cm$ for all $j \in [4]$. Note that any vertex in~$\cG$ is in at most~$9$ edges in~$\cG^{(3)}$, therefore~$\Delta = \Delta(\cG) = 9 + 6 + 1 = 16$. Let~$\Hy$ be the~$4$-partite~$3$-complex obtained from~$\J[\bigcup_{j \in [4]} V_{i_j}]$ by adding the edges of~$H'$ supported on~$\J[\bigcup_{( j, k, \ell) \in \binom{[4]}{3}} V_{i_j}V_{i_k}V_{i_{\ell}}]$. Therefore~$\Hy$ is a~$(\mathbf{d}, \eps_k, \eps, r)$-regular~$4$-partite~$3$-complex with vertex classes~$V_{i_1}, \dots, V_{i_4}$ all of size~$m$, respecting the partition of~$\cG$.

 Let $\mathcal{P}$ be a maximal collection of vertex-disjoint squared-tight-paths in~$H[\bigcup_{j \in [4]} V_{i_j}]$, each of which uses exactly $cm$ vertices from each of~$V_{i_1}, \dots, V_{i_4}$. In particular, the latter implies that $|\mathcal{P}| < 1/c$. Let $V_{i_j}' := V_{i_j} \setminus V(\mathcal{P})$ for each~$j \in [4]$. If $|\mathcal{P}| < (1-\beta)/c$, then we have $|V_{i_j}'| \ge \beta m$ for each~$j \in [4]$.  Lemma~\ref{lma: regular restriction}, with~$\Hy, 3, 4, \beta, \{V_{i_j}\}_{j \in [4]}$ playing the roles of~$\cG, k, s, \alpha, \{V_i\}_{i \in [s]}$ respectively, then implies that the induced subcomplex~$\Hy[V_{i_1}', \dots, V_{i_4}']$ is~$(\mathbf{d}, \sqrt{\eps_3}, \sqrt{\eps}, r)$-regular. So we may apply Lemma~\ref{lma: Embedding Lemma}, with $\Hy[V_{i_1}', \dots, V_{i_4}']$, $3$, $c/\beta$, $\beta m$, $\sqrt{\eps_3}$ and $\sqrt{\eps}$ playing the role of $\Hy, k, c, m, \eps_k$ and $\eps$ respectively, to obtain a partition-respecting copy of~$\cG$ in~$\Hy[V_{i_1}', \dots, V_{i_4}']$. This gives us a squared-tight-path $P$ in~$H[\bigcup_{j \in [4]} V_{i_j}]$ which uses exactly $cm$ vertices from each of~$V_{i_1}, \dots, V_{i_4}$, so the collection $\mathcal{P} \cup P$ contradicts the maximality of~$\mathcal{P}$. Therefore, we must have $(1-\beta)/c \le |\mathcal{P}|$. It follows that for each~$j \in [4]$ we have $|V_{i_j} \setminus V(\mathcal{P})| \le m - |\mathcal{P}|cm \le \beta m$. This proves the claim.
\end{proofclaim}

Applying Claim~\ref{clm: path tiling in each tetrahedron} to each member of~$\mathscr{K}$ we obtain a collection of at most $|\mathscr{K}|/c \le t_1/4c \le L$ vertex-disjoint squared-tight-paths in~$H'$ which collectively cover all but at most $\beta mt + 3\theta^{1/8}mt + t_1! \le \gamma n$ vertices of~$H$. \end{proof}

We are now ready to prove Lemma~\ref{lma: path cover lemma}. We do this by connecting the squared-tight-paths obtained from Lemma~\ref{lma: path tiling lemma} and the specified ends $\mathbf{e}_1$ and $\mathbf{e}_2$ into a single path.

\begin{proof}[Proof of Lemma~\ref{lma: path cover lemma}]
Let~$ \mathbf{e}_1$ and~$\mathbf{e}_2$ be disjoint ordered triples which are edges of~$H$. Let~$H' := H \setminus (V(\mathbf{e}_1)\cup V(\mathbf{e}_2))$ and~$n' := |V(H')|$. Note that~$\delta_2(H') \ge (7/9+\alpha)n-6 \ge (7/9+3\alpha/4)n'$. 

Choose a set~$W \subseteq V(H')$ of size~$|W| = \gamma n'/2$ uniformly at random. Observe that for each pair $u, v$ of distinct vertices of~$H$, both $\deg_{H} (uv, W)$ and~$\deg_{H}(uv, V(H') \setminus W)$ are hypergeometric random variables, with expectations~$\mathbb{E}\deg_{H} (uv, W) = (7/9+3\alpha/4)\gamma n'/2$ and~$\mathbb{E}\deg_{H} (uv, V(H') \setminus W) = (7/9 + 3\alpha/4)(1-\gamma/2)n'$. So by applying Hoeffding's inequality (Lemma~\ref{Lma: Chernoff hyp}) and taking a union bound over all pairs of vertices, we find that with high probability every pair $u,v$ of distinct vertices of~$H$ satisfies  \begin{align} \label{eqn: codegree in and out of W}
    \deg_{H} (uv, W) \ge \left(\frac{7}{9} + \frac{\alpha}{2}\right)|W| \text{ and } \deg_{H} \left(uv, V(H') \setminus W\right) \ge \left(\frac{7}{9} + \frac{\alpha}{2}\right)|V(H')\setminus W|.
\end{align} 
Fix~$W$ for which this event holds. Let~$n'' := |V(H') \setminus W|$, so $\delta_2(H'\setminus W) \ge (7/9 + \alpha/2)n''$. 

By Lemma~\ref{lma: path tiling lemma}, with $H' \setminus W, n'', \alpha/2$ and $\gamma/2$ playing the roles of $H, n, \alpha$ and $\gamma$ respectively, there exists a collection of at most~$L$ squared-tight-paths $P_1, \dots, P_{\ell}$ in~$H' \setminus W$ which collectively cover all but at most $\gamma n''/2$ vertices of~$H' \setminus W$.  We now connect these squared-tight-paths using~$W$. For each~$i \in [\ell]$ let $\mathbf{p}_i$ be the initial triple of~$P_i$ and let $\mathbf{q}_i$ be the final triple of~$P_i$. Also let $P_0 := \mathbf{p}_0 = \mathbf{q}_0 = \mathbf{e}_1$ and $P_{\ell+1} := \mathbf{p}_{\ell+1} = \mathbf{q}_{\ell+1} = \mathbf{e}_2$. 
\begin{claim}
   For each~$0 \le i \le \ell$, there exist disjoint ordered triples $\mathbf{x}_i$ and $\mathbf{y}_i$ whose vertices are in~$W$ such that \begin{enumerate}[noitemsep, label={\rm(\roman*)}]
    \item $\mathbf{x}_i$ and $\mathbf{y}_i$ are edges of~$H[W]$,
    \item $\mathbf{x}_i\mathbf{p}_i$ and $\mathbf{q}_i\mathbf{y}_i$ are both squared-tight-paths, and
    \item the triples $\mathbf{x}_i$ and $\mathbf{y}_i$ for each~$0 \leq i \leq \ell$ are all pairwise vertex-disjoint.
\end{enumerate}

\end{claim}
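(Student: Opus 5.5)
We build the triples greedily, one index $i$ at a time, taking $\mathbf{x}_i$ first and then $\mathbf{y}_i$. Throughout we keep a set $F \subseteq W$ of vertices already used. There are at most $2(\ell+1) \le 2(L+1)$ triples, so $|F| \le 6(L+2)$, which is a constant. Since $1/n \ll 1/L \ll \gamma$, this is negligible compared with $|W| = \gamma n'/2$.

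Now fix $i$ and write $\mathbf{p}_i = p_1p_2p_3$. We want an ordered triple $\mathbf{x}_i = x_1x_2x_3$ of vertices in $W\setminus F$ such that $x_1x_2x_3p_1p_2p_3$ is a squared-tight-path. This requires the six triples $x_1x_2x_3$, $x_2x_3p_1$, $x_3p_1p_2$ to be edges of $H$, together with the edges needed for the three tetrahedra $x_1x_2x_3p_1$, $x_2x_3p_1p_2$ and $x_3p_1p_2p_3$. We choose the vertices backwards, $x_3$ first, then $x_2$, then $x_1$, each from $W\setminus F$.

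\begin{itemize}
\item Vertex $x_3$ must lie in $N_H(p_1p_2)\cap N_H(p_1p_3)\cap N_H(p_2p_3)$. By \eqref{eqn: codegree in and out of W} each of these neighbourhoods contains at least $(7/9+\alpha/2)|W|$ vertices of $W$. So at least $(1/3+3\alpha/2)|W|$ vertices of $W$ qualify; note $p_1p_2p_3 \in E(H)$ because $P_i$ is a squared-tight-path. This is the argument of Fact~\ref{fct: tetrahedral codegree} restricted to $W$.
\item Vertex $x_2$ must complete $x_2x_3p_1p_2$ to a copy of $K^3_4$, so it must lie in the common neighbourhood in $W$ of the three pairs $x_3p_1$, $x_3p_2$, $p_1p_2$. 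This again leaves at least $(1/3+3\alpha/2)|W|$ choices.
\item Vertex $x_1$ must lie in the common neighbourhood in $W$ of $x_2x_3$, $x_2p_1$, $x_3p_1$. This gives the same bound.
\end{itemize}

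At each step we avoid $F$ and the previously chosen vertices, at a cost of $O(L)$ choices. Then $x_1x_2x_3 \in E(H[W])$, because $x_1 \in N_H(x_2x_3)$.

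We choose $\mathbf{y}_i = y_1y_2y_3$ symmetrically, writing $\mathbf{q}_i = q_1q_2q_3$. Take $y_1$ in the common neighbourhood of the pairs of $q_1q_2q_3$, then $y_2$ completing $q_2q_3y_1y_2$, then $y_3$ completing $q_3y_1y_2y_3$, each from $W\setminus F$. For $i=0$ the same works with $\mathbf{p}_0=\mathbf{q}_0=\mathbf{e}_1$. Here $\mathbf{x}_0$ is in fact not needed, but constructing it costs nothing. The index $\ell+1$, corresponding to $\mathbf{e}_2$, is not included in the claim.

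There is no real obstacle. The only points to check are the following.
\begin{itemize}
\item We use \eqref{eqn: codegree in and out of W} for pairs containing vertices outside $W$, namely the vertices of $P_i$ or of $\mathbf{e}_1$. This is allowed because \eqref{eqn: codegree in and out of W} was established for all pairs of vertices of $H$.
\item The bound $3(7/9+\alpha/2)-2 = 1/3+3\alpha/2$ stays positive and linear in $|W|$, which dominates the constant size of $F$.
\end{itemize}
Pairwise disjointness of all the triples, condition (iii), holds by construction, since every new vertex is chosen outside $F$ and $F$ is updated after each choice.
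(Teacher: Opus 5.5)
Your proposal is correct and matches the paper's proof essentially verbatim. The paper also chooses the vertices of $\mathbf{x}_i$ greedily, starting from the one adjacent to $\mathbf{p}_i$ (and those of $\mathbf{y}_i$ starting from the one adjacent to $\mathbf{q}_i$), each time as a common neighbour in $W$ of the three pairs needed to complete the next tetrahedron, and uses the same count $3(7/9+\alpha/2)|W|-2|W| \ge |W|/3 \ge 6(\ell+1)$ from \eqref{eqn: codegree in and out of W} to ensure previously used vertices can always be avoided.
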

\begin{proofclaim}
We write $\mathbf{p}_i = p_ip_i'p_i''$ and $\mathbf{q_i}$ similarly. For each~$0 \le i \le \ell$ we will pick vertices $x_i, x_i',x_i''$ in order and write $\mathbf{x}_i = x_ix_i'x_i''$ and similarly for~$\mathbf{y}_i$. To see that it is possible to choose such triples, for each~$0 \leq i\ \leq \ell$ in turn first choose $x_i''$ in~$N_H(p_ip_i', W) \cap N_H(p_ip_i'', W) \cap N_H(p_i'p_i'', W)$, then choose $x_i'$ in~$N_H(p_ip_i', W) \cap N_H(p_ix_i'', W)\cap N_H(p_i'x_i'', W)$, then choose $x_i$ in~$N_H(p_ix_i', W) \cap N_H(p_ix_i'', W) \cap N_H(x_i'x_i'', W)$, and similarly choose $y_i, y_i', y_i''$ in that order. For each choice,~\eqref{eqn: codegree in and out of W} ensures that the relevant intersection of neighbourhoods has size at least $3(7/9+\alpha/2) |W| - 2|W| \geq |W|/3 \geq 6(\ell+1)$, so there is always an available vertex to choose which has not previously been used.\end{proofclaim}

Having picked $\mathbf{x}_i$ and $\mathbf{y}_i$ for each~$0 \le i \le \ell$, we now apply Lemma~\ref{lma: simpler connecting lemma}, with $W, 1/L, \theta n', \ell+1, \mathbf{y}_i$ and $\mathbf{x}_{i+1}$ for each~$0\le i\le\ell$ playing the roles of $H, \psi, n, s, \mathbf{x}_i$ and $\mathbf{y}_i$ for each~$i \in [s]$ respectively, to obtain a set of vertex-disjoint squared-tight-paths $P_0', \dots, P_{\ell}'$ such that for each~$0\le i \le \ell$, the path $P_i'$ has initial triple $\mathbf{y}_i$ and final triple $\mathbf{x}_{i+1}$. It follows that ${P} = P_0 P_0' P_1 P_1' \dots P_{\ell}'P_{\ell+1}$ is a squared-tight-path from~$P_0 = \mathbf{e}_1$ to~$P_{\ell+1} = \mathbf{e}_2$ covering all but at most $\gamma n''/2 + |W| \le \gamma n$ vertices of~$H$. 
\end{proof}

\printbibliography   
\end{document}